\definecolor{ao(english)}{rgb}{0.0, 0.5, 0.0}
\theoremstyle{plain}
\newtheorem{theorem}{Theorem}[section]
\newtheorem{corollary}[theorem]{Corollary}
\newtheorem{prop}[theorem]{Proposition}
\newtheorem{lemma}[theorem]{Lemma}
\theoremstyle{definition}
\newtheorem{definition}[theorem]{Definition}
\newtheorem{remark}[theorem]{Remark}
\newtheorem{example}[theorem]{Example} 
\newcommand{\Z}{\mathbb{Z}}
\newcommand{\N}{\mathbb{N}}
\newcommand{\R}{\mathbb{R}}
\newcommand{\p}{\mathbb{P}}
\newcommand{\fl}[1]{{\left\lfloor #1 \right\rfloor}}
\newcommand{\cl}[1]{{\left\lceil #1 \right\rceil}}
\newcommand{\sset}{\subset}
\newcommand{\mathand}{\;\text{and}\;}
\newcommand{\Ga}{\Gamma}
\newcommand{\ep}{\epsilon}
\newcommand{\de}{\delta}
\newcommand{\sig}{\sigma}
\newcommand{\Sig}{\Sigma}
\newcommand{\eps}{\epsilon}
\newcommand{\la}{\lambda}
\newcommand{\al}{\alpha}
\newcommand{\sD}{\mathcal{D}}
\newcommand{\sE}{\mathcal{E}}
\newcommand{\sG}{\mathcal{G}}
\newcommand{\eqd}{\stackrel{d}{=}}
\newcommand{\X}{\times}
\newcommand{\cvgdown}{\downarrow}
\newcommand{\bw}{\mathbf{w}}
\newcommand{\bx}{\mathbf{x}}
\newcommand{\by}{\mathbf{y}}
\newcommand{\bz}{\mathbf{z}}
\newcommand{\bp}{\mathbf{p}}
\newcommand{\bq}{\mathbf{q}}
\newcommand{\geole}{\trianglelefteq}
\title{RSK in last passage percolation: a unified approach}
\author{Duncan Dauvergne \and Mihai Nica \and B\'alint Vir\'ag}
\begin{document}
\maketitle
\begin{abstract}
We present a version of the RSK correspondence based on the Pitman transform and geometric considerations. This version unifies ordinary RSK, dual RSK and continuous RSK. We show that this version is both a bijection and an isometry, two crucial properties for taking limits of last passage percolation models.  

We use the bijective property to give a non-computational proof that dual RSK maps Bernoulli walks to nonintersecting Bernoulli walks.
\end{abstract}
\vspace{3em}
\begin{center}
\includegraphics[width=0.6\textwidth]{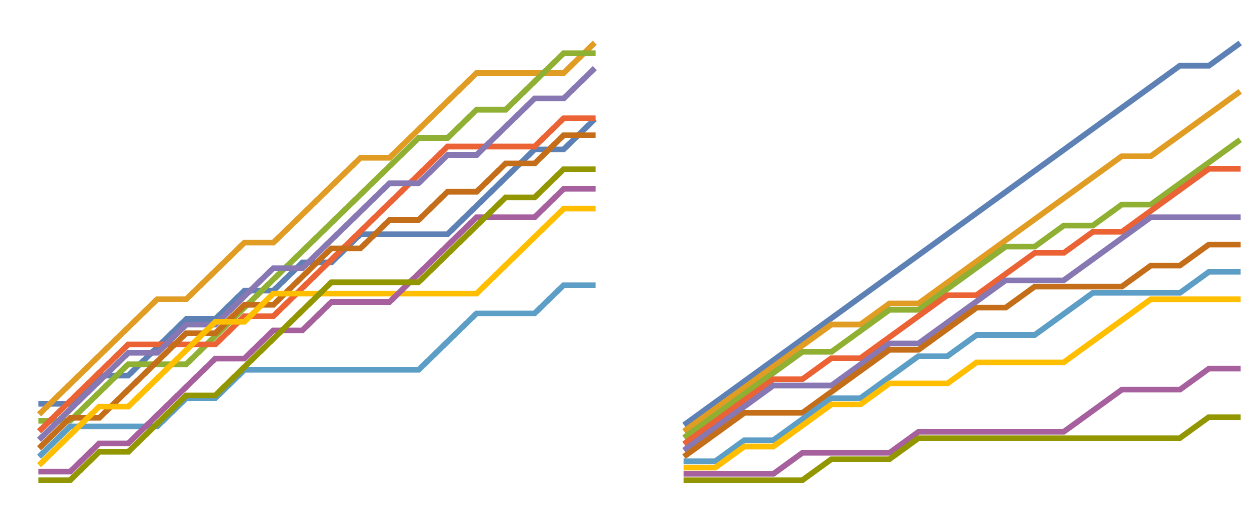}
\end{center}

	\tableofcontents	
	
\section{Introduction}


The Robinson-Schensted-Knuth (RSK) correspondence originates in the study of representations of the symmetric group,   \cite*{robinson1938representations}. In probability, it has been mainly used for understanding different models of two-dimensional directed last passage percolation. 

Across these models, several versions of RSK have been used in the literature. RSK has also been used to construct the directed landscape, an object that is expected to be the universal limit of such models, see \cite*{DOV}. The goal of this paper is to introduce a version of RSK that unifies three commonly used versions: ordinary RSK, dual RSK, and continuous RSK. We use this unified setting to prove some basic important properties of RSK: bijectivity and isometry.
The results in this paper are central to establishing the scaling limit of the longest increasing subsequence and other KPZ models in \cite*{dauvergne2021scaling}.  Our approach keeps probability applications in mind and avoids representation theory concepts entirely. We work with a global perspective, using last passage values as opposed to local bumping algorithms. 

One important new contribution of this work is in understanding an infinite-time version of the RSK bijection. Surprisingly, this version turns out to be much simpler than ordinary RSK! We also obtain parallel descriptions for RSK and its inverse, allowing us to give a purely global geometric description for the RSK inverse map. 

Let $\mathcal D^n$ be the space of $n$-tuples of cadlag functions from $[0,\infty)\to \R$ with no negative jumps.  Each $f\in \mathcal D^n$ defines a  finitely additive signed measure $df$ on $[0, \infty)\times \{1, \dots, n\}$  through
$$
df\left([x,y]\times\{i\}\right)=f_{i}(y) - f_i(x^-).
$$ 
When visualizing $f$ and this measure, we will think in matrix coordinates, so that line $1$ is on top and line $n$ is on the bottom.
For $(p, q) = (x, n; y, m) \in (\R \X \Z)^2$, we write $p \nearrow q$ if $x \le y$ and $n \ge m$.  For $p \nearrow q$, a path $\pi$ from $p$ to $q$ is a union of closed intervals 
\begin{equation*}
[t_i,t_{i-1}]\times \{i\}, \quad i=m,m+1,\ldots,n, \quad x=t_n\le t_{n-1}\le \cdots \le t_m \le  t_{m-1}=y.
\end{equation*}
Two paths are called {\bf essentially disjoint} if the corresponding intervals have disjoint interiors, see Figure \ref{fig:essentially}. Define the {\bf length} of a path $\pi$ by
$$
|\pi|_f =  df(\pi).
$$
For $u=(p;q)=(x,n;y,m)\in (\R \X \Z)^2$  define the \textbf{distance} in $f$ from $p$ to $q$ by 
\begin{equation}
\label{E:lpp-def-intro}
f[u]=f[p\to q]=f[(x, n) \to (y, m)] = \sup_{\pi} |\pi|_f,
\end{equation}
where the supremum is taken over all paths $\pi$ from $p$ to $q$. We also define $f[p^k\to q^k]=\sup df(\pi_1\cup \ldots \cup \pi_k)$, where the supremum is over tuples of $k$ essentially disjoint paths from $p$ to $q$. We call a tuple that achieves $f[p^k\to q^k]$ a \textbf{$k$-disjoint optimizer} from $p$ to $q$.
\begin{figure}[t]
\centering
\includegraphics[width=0.7\textwidth]{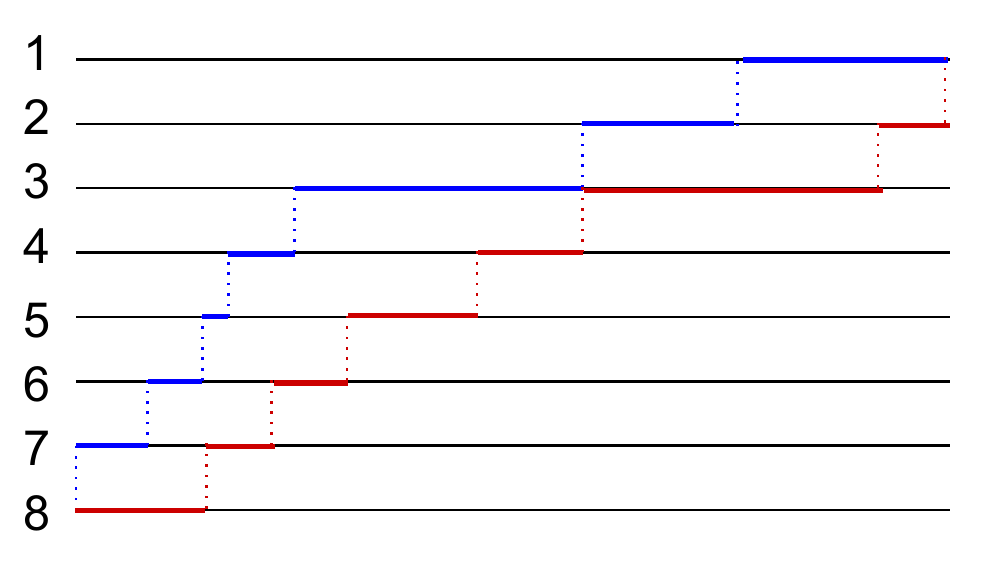}
\caption{Two essentially disjoint paths}
\label{fig:essentially}
\end{figure}

Define the \textbf{melon map} $W:\sD^n \to \sD^n$ by
\begin{equation}
\label{E:melon-def}
Wf_k(y) = f[(0, n)^k \to (y, 1)^k]- f[(0, n)^{k-1} \to (y, 1)^{k-1}]
\end{equation}
with the convention that  $f[p^0\to q^0]=0$, see the end of the introduction for discussion on how this is related to the standard presentation of RSK. 

We summarize some of the remarkable properties of the map $W$ in the next theorem. 
\begin{theorem}
	\label{T:W-facts}
	Consider the melon map $W$ on $\sD^n$. \begin{enumerate}[label=(\roman*)]
		\item {\bf Isometry.} For $p = (x, n)$ and $q = (y, 1)$, we have 
		$
		f[p \to q] = Wf[p \to q].
		$
		\item {\bf Idempotent property.} $WW=W$.
		\item {\bf Image.} $\operatorname{Im }W=\sD^n_{\uparrow}$, the set of all $f\in \sD^n$ such that $f_{i-1}(y^-) \ge f_i(y)$ for all $i, y$.
		\item {\bf Bijection.} $W$ is a bijection between $\sD^n_{\downarrow}$ and  $\sD^n_{\uparrow}$, see below.
		\end{enumerate}
\end{theorem}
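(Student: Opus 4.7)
The plan is to prove the four parts in the order (iii), (ii), (i), (iv), so that each result is available for the next.

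For (iii) image, the containment $\operatorname{Im} W \subseteq \sD^n_\uparrow$ unpacks, after substituting the definition, into the concavity-type inequality
\begin{equation*}
f[(0,n)^{i-1}\to(y^-,1)^{i-1}] + f[(0,n)^{i-1}\to(y,1)^{i-1}] \ge f[(0,n)^i\to(y,1)^i] + f[(0,n)^{i-2}\to(y^-,1)^{i-2}].
\end{equation*}
I would prove this by a Lindstr\"om--Gessel--Viennot uncrossing: take an $i$-disjoint optimizer ending at $(y,1)^i$ together with an $(i-2)$-disjoint optimizer ending at $(y^-,1)^{i-2}$ and swap path tails near the shared endpoint region, regrouping into two $(i-1)$-tuples with the claimed endpoints and the same total length. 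The reverse containment $\sD^n_\uparrow\subseteq\operatorname{Im}W$ is immediate once (ii) is known, since then $f=Wf$ for $f\in\sD^n_\uparrow$.

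For (ii) idempotence, I would show that when $f\in\sD^n_\uparrow$, every $k$-disjoint optimizer from $(0,n)^k$ to $(y,1)^k$ is the tuple of the top $k$ whole lines. Any competing configuration can be pushed one line upward at a time without loss of length, applying the defining bound $f_{i-1}(t^-)\ge f_i(t)$ at each swap location. The telescoping in the definition of $W$ then gives $Wf_k(y)=f_k(y)-f_k(0^-)$, i.e.\ $Wf=f$ up to boundary normalization, which yields both $WW=W$ and the remaining containment for (iii).

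For (i) isometry, I would aim for the stronger $k$-disjoint identity $f[(x,n)^k\to(y,1)^k]=Wf[(x,n)^k\to(y,1)^k]$ for all $k$; the claim is the $k=1$ case. The main technical step is another uncrossing that matches an optimal $k$-tuple in $Wf$ (which, thanks to (iii) and (ii), has the top-line structure on segments starting at $(0,n)$) against an optimal $k$-tuple in $f$. The built-in identity $\sum_{k=1}^m Wf_k(y)=f[(0,n)^m\to(y,1)^m]$ handles the base points; what remains is to uncross the optimizers at the free start point $(x,n)$ against the $(0,n)$-based optimizers that define $Wf$, and to read off the equality on both sides. A naive closed formula like $Wf_1(y)-Wf_n(x^-)$ overcounts in simple continuous examples, so the uncrossing is genuinely required.

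For (iv) bijection, I would construct the inverse explicitly, presumably via a dual melon map defined by $k$-disjoint LPP values running from $(y,1)$ back to $(x,n)$, or equivalently by reversing the implicit adjacent-Pitman-transform chain; with the earlier parts in hand, checking that $W$ and its dual are two-sided inverses of each other on $\sD^n_\downarrow$ and $\sD^n_\uparrow$ reduces to confirming each acts as the identity on its codomain. The main obstacle throughout is executing the LGV uncrossings of (i) and (iii) rigorously in the presence of cadlag jumps, shared endpoint columns, and the essentially-disjoint interior condition; for (iv), the core work is identifying the correct candidate for the inverse, which is one of the paper's novel global-geometric contributions.
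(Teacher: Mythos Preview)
Your plan for (ii) and (iii) is close in spirit to what the paper does, and the quadrangle/uncrossing inequality you wrote down for (iii) is essentially the same argument the paper uses (in a different place, Lemma~\ref{L:RSK-well-defined}) to check Gelfand--Tsetlin interlacing. The observation that $Wf=f$ on $\sD^n_\uparrow$ because the top $k$ lines are an optimizer is also exactly what the paper records (Remark~\ref{R:Pitman-ordered}). So those parts are fine, if sketched.

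The gap is in (i). You propose to ``uncross the optimizers at the free start point $(x,n)$ against the $(0,n)$-based optimizers that define $Wf$,'' but this is not a usable strategy as stated. The difficulty is that $Wf[(x,n)\to(y,1)]$ is a supremum over paths in the \emph{transformed} environment, and each increment $Wf_i(t_{i-1})-Wf_i(t_i^-)$ along such a path is a difference of four multi-point $f$-last-passage values with varying $k$. There is no evident single uncrossing that collapses this to $f[(x,n)\to(y,1)]$; you correctly note that the naive closed formula already fails, but you have not replaced it with anything concrete. The paper avoids this entirely by a structural detour: it defines a second map $\omega=\sigma_{n-1}\cdots\sigma_1\sigma_2\cdots\sigma_{n-1}$ built from $2$-line Pitman transforms $\sigma_i$, proves isometry for the $2$-line map by a bare-hands cadlag computation (Lemma~\ref{L:wm-lem}), lifts to each $\sigma_i$ via the metric composition law (Proposition~\ref{P:wm-equivalent}), and only \emph{then} shows $\omega=W$ using (ii) and (iii). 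The $2$-line reduction is the missing idea in your proposal; without it, (i) does not go through.

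For (iv), your instinct to look for an explicit inverse via a dual/reversed construction is right. The paper's realization is $M=\lim_{t\to\infty}R_tW_tR_t$ (Section~\ref{S:bijectivity}), and the proof that $W$ and $M$ are mutual inverses is pleasantly soft once isometry and idempotence are in hand: both $MW=M$ and $WM=W$ follow immediately because each map depends only on bottom-to-top last passage values, which are preserved by the other. The real work in (iv) is not the inversion but identifying the image $\sD^n_\downarrow$ of $M$, which requires the quasi-optimizer machinery (Lemmas~\ref{L:M-properties-3} and~\ref{L:M-properties-2}).
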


The ordering in Theorem \ref{T:W-facts} (iii)  gives the sequence $Wf_1, \dots,Wf_n$ the appearance of stripes on a watermelon. For this reason, we call $Wf$ the \textbf{melon} of $f$.
The isometry property extends to multi-point last passage values as in \eqref{E:melon-def}, see Proposition \ref{P:W=w}.

The set of functions $\sD^n_{\downarrow}\subset \sD^n$ on which $W$ is a bijection can be explicitly described. 
Define $\sD^n_{\downarrow -}$ as the set of functions on which the following holds: the $k$ \emph{lowest} constant paths $[0,\infty)\times\{i\}, i=k-n+1,\ldots,n$ are a local limit as $t\to \infty$ of a sequence of $k$-disjoint optimizers from $(0,n)$ to $(t,1)$, see Section \ref{S:bijectivity}. The set $\sD^n_{\downarrow}$ is the closure of $\sD^n_{\downarrow -}$ in the uniform topology.


For the inverse of $W$, let $R_t$ transform  the signed measure $df$ up to time $t$ by rotating the base space $[0, t]\X \{1, \dots, n\}$ by 180 degrees. We write $R_tf=g$ if $R_t(df)=R_t(dg)$, and let 
\begin{equation}\label{E:M-def-intro}
Mf=\lim_{t\to\infty} R_tWR_tf.
\end{equation}
\begin{theorem}[Explicit inverse]
	\label{T:M-facts}	$M$ is well-defined on $\sD^n$. Moreover we have
\begin{enumerate}[label=(\roman*)]
		\item {\bf Isometry.} For $p = (x, n)$ and $q = (y, 1)$, we have 
		$
		f[p \to q] = Mf[p \to q].
		$
		\item {\bf Idempotent property.} $MM=M$.
		\item {\bf Image.} $\operatorname{Im }M=\sD^n_{\downarrow}$.
		\item {\bf Bijection.} $M$ is a bijection between $\sD^n_{\uparrow}$ and  $\sD^n_{\downarrow}$ with inverse $W$.
\end{enumerate}
\end{theorem}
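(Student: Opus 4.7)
The plan is to reduce each of the four claims to the corresponding property of $W$ in Theorem \ref{T:W-facts}, using the rotation operator $R_t$ as a bridge. The key geometric observation is that $R_t$ sends a path from $(x,n)$ to $(y,1)$ inside $[0,t]\X\{1,\dots,n\}$ to a path from $(t-y,n)$ to $(t-x,1)$ and preserves path length when the measure $df$ is rotated simultaneously; thus $R_t$ is a last-passage-preserving involution on measures supported in $[0,t]$, and it swaps the notions of $\sD^n_\uparrow$ and $\sD^n_\downarrow$ on that interval.

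The first task is well-definedness of $Mf = \lim_{t\to\infty} R_tWR_tf$. I would fix a compact interval $[0,S]$ and show stabilization of $R_tWR_tf$ on $[0,S]\X\{1,\dots,n\}$ once $t$ is large. The idea is that computing $W(R_tf)_k$ at a point near $t-S$ involves $k$-disjoint optimizers which, by the image property $\operatorname{Im} W = \sD^n_\uparrow$, concentrate near the top $k$ lines; rotating back, these correspond to optimizers on $f$ concentrated on the bottom $k$ lines near the right end of $[0,t]$. The relevant horizontal extent depends only on $f$ and $S$, not on $t$, so once $t$ exceeds this extent, further increasing $t$ only adds measure in an irrelevant region. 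Once well-definedness is in hand, isometry (i) is immediate: $R_t$ preserves last passage values between corresponding points, Theorem \ref{T:W-facts}(i) gives the isometry for $W$, and rotating back recovers the original endpoints, so the identity survives the limit.

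For (ii), (iii), and (iv) together, the cleanest route is to establish the single identity that $M$ acts as the identity on $\sD^n_\downarrow$. Combined with the containment $\operatorname{Im} M \subseteq \sD^n_\downarrow$ — which follows because $W(R_tf) \in \sD^n_\uparrow$ on $[0,t]$ and $R_t$ reverses the line ordering, so the limit lies in the uniform closure $\sD^n_\downarrow$ — this immediately yields idempotency (since $Mf \in \sD^n_\downarrow$ forces $MMf = Mf$), surjectivity (every $h \in \sD^n_\downarrow$ equals $Mh$), and, by pairing with the bijection $W \colon \sD^n_\downarrow \to \sD^n_\uparrow$ from Theorem \ref{T:W-facts}(iv), the two inverse identities $WM = \id$ on $\sD^n_\uparrow$ and $MW = \id$ on $\sD^n_\downarrow$. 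To prove $Mf = f$ for $f \in \sD^n_\downarrow$, I would use that $R_tf$ lies in $\sD^n_\uparrow$ on $[0,t]$, so by the image property and idempotence of $W$, $W(R_tf) = R_tf$ up to boundary effects at $t$; rotating back gives $R_tWR_tf \to f$ uniformly on compact sets.

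The main obstacle will be the stabilization argument underlying well-definedness. Because the cadlag functions in $\sD^n$ can have arbitrary growth, controlling the spatial extent of $k$-disjoint optimizers requires genuine quantitative input, and the boundary of the rotated interval at $t$ introduces a cut in the measure that must be shown not to pollute the limit on any compact set. The definition of $\sD^n_{\downarrow -}$ — involving the lowest $k$ constant paths as local limits of optimizers — is designed precisely to capture this stabilization, and I anticipate that the full proof combines a compactness statement for optimizers with the bijectivity of $W$ and careful bookkeeping of the boundary corrections.
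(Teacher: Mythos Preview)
Your proposal rests on a misreading of what $\sD^n_\downarrow$ is. You treat it as a ``reverse Pitman ordering'' condition, so that $R_t$ would swap $\sD^n_\uparrow$ and $\sD^n_\downarrow$; but $\sD^n_\downarrow$ is defined (see the paragraph after \eqref{E:ell-to-infty}) as the set of $f$ for which the \emph{rightmost} $k$-tuple from $(0,n)^k$ to $(\infty,1)^k$ is a quasi-optimizer. This has nothing to do with $f_n \preceq \dots \preceq f_1$. A concrete counterexample: if $f_1,\dots,f_n$ are independent Brownian motions, then $f\in\sD^n_\downarrow$ almost surely by Proposition~\ref{P:sourness}, yet $R_t f$ is again a tuple of (time-reversed, line-swapped) Brownian motions, which is certainly not Pitman ordered. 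So both your containment $\operatorname{Im} M \subset \sD^n_\downarrow$ (via ``$R_t$ reverses the line ordering'') and your identity $Mf=f$ on $\sD^n_\downarrow$ (via ``$R_tf\in\sD^n_\uparrow$, hence $W(R_tf)=R_tf$'') are unsupported.

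Your well-definedness sketch is also off: $R_tWR_tf$ does not \emph{stabilize} on compacts as $t\to\infty$; it is a genuine limit. The paper (Lemma~\ref{L:M-exists}) obtains convergence by showing, via the quadrangle inequality (Lemma~\ref{L:quadrangle}), that the partial sums $\Sigma_k M_tf(x^-)$ are monotone nonincreasing in $t$ and bounded below by $\Sigma_k f(x^-)$; monotonicity of the difference then upgrades pointwise to uniform-on-compacts convergence. For the remaining parts, the paper's logic is in a different order from yours: isometry of $M_t$ transfers from $W$ and passes to the limit; idempotence then follows \emph{directly} from isometry, since $M$ is defined purely in terms of line-$n$-to-line-$1$ last passage values (Lemma~\ref{L:M-properties-1}); the image is identified by characterizing the fixed-point equation $Mg=g$ in terms of quasi-optimizers (Lemma~\ref{L:M-properties-3}); and the bijection follows from the abstract fact (Lemma~\ref{L:idempotent-inverse}) that two idempotents $A,B$ with $AB=A$ and $BA=B$ are mutual inverses between their images, where $MW=M$ and $WM=W$ are again immediate from isometry.
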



The following proposition provides a simple sufficient condition for $f$ to be in $\sD^n_\downarrow$. It implies that classical examples, such as i.i.d.\ random walks or Brownian motion paths are in
$\sD^n_{\downarrow}$, and so there is no information is lost by applying the map $W$.

\begin{prop}
\label{P:sourness-intro}
If $f\in \sD^n$ and for all $j$, the function $(f_{j+1}-f_j)^+$ is unbounded, then $f \in \sD^n_\downarrow$.
\end{prop}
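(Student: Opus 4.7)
The strategy is to establish the stronger statement $f\in \sD^n_{\downarrow -}$ directly; since $\sD^n_\downarrow$ is by definition the uniform closure of $\sD^n_{\downarrow -}$, this suffices. For each $k\in\{1,\dots,n\}$ I must exhibit a sequence of times $t_m\to\infty$ and accompanying $k$-disjoint optimizers from $(0,n)^k$ to $(t_m,1)^k$ whose local limit is the collection of the $k$ lowest constant paths.

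The first step is to upgrade the hypothesis to a tail version. Since each $f_i$ is cadlag, it is bounded on every compact interval, so the unboundedness of $(f_{j+1}-f_j)^+$ on $[0,\infty)$ forces $\limsup_{s\to\infty}(f_{j+1}(s)-f_j(s))=\infty$ for every $j$. Iterating this, for each $m\ge 1$ I can successively choose transition times $m<\tau_1<\tau_2<\cdots<\tau_{n-1}$ with $f_{n-i+1}(\tau_i)-f_{n-i}(\tau_i)>m$ for every $i$.

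Next I build the candidate near-optimizer. Set $t_m=\tau_{n-1}+1$ and construct $k$ essentially disjoint paths $\pi_1^{(m)},\dots,\pi_k^{(m)}$ as follows. For $j=1,\dots,k$, path $j$ jumps instantaneously at time $0$ through the lines above its home line $n-j+1$, stays on its home line for some long initial interval extending well beyond $m$, and then transitions upward through the lines $n-j,\dots,1$ using transition times lying in small, pairwise disjoint neighborhoods of the $\tau_i$'s. The essential-disjointness constraints require that the home-line durations be non-increasing in $j$ and that the upward transitions on each common upper line occur in a specific order; these requirements are met by slightly perturbing the chosen $\tau_i$'s. By construction, each transition through an upper line contributes at least $m-o(1)$ to the path length.

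Finally I argue the optimality comparison. Splitting any competing $k$-disjoint tuple into its home-line and transit contributions, any tuple that fails to hug the bottom $k$ lines on $[0,m]$ must have some path leaving its home line before time $m$, thereby forfeiting at least one of the large-gap transitions and losing $\Omega(m)$; this loss cannot be compensated elsewhere because on any compact interval the $df$-contributions are fixed by $f$. Hence for $m$ large, any $k$-disjoint optimizer must agree with our bottom-hugging structure on $[0,m]$, and sending $m\to\infty$ gives the required local limit. The main obstacle is this last optimality comparison for $k>1$: the essential-disjointness couples the $k$ paths, and the proof that the $\Omega(m)$ loss is not compensated appears to require an exchange argument, or equivalently an induction on $k$ in which one treats the highest-home-line path as a $1$-path problem on the reduced $(n-1)$-line environment (which still satisfies the hypothesis) after removing the bottom path. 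For $k=1$ the argument is immediate: $f[(0,n)\to(t,1)]$ is a scalar maximization over ordered transition times, and the bounded contribution on $[0,m]$ cannot outweigh the gap $m$ available past time $m$.
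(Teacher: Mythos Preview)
Your reduction to showing $f\in\sD^n_{\downarrow-}$ and the upgrade of the hypothesis to $\limsup_{s\to\infty}(f_{j+1}(s)-f_j(s))=\infty$ are both correct and match what the paper does. The gap is in your optimality comparison, which you yourself flag as the main obstacle and do not actually carry out.

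Concretely, the sentence ``any tuple that fails to hug the bottom $k$ lines on $[0,m]$ must have some path leaving its home line before time $m$, thereby forfeiting at least one of the large-gap transitions and losing $\Omega(m)$'' is not justified. A competing optimizer has no obligation to route through your chosen times $\tau_i$, so there is nothing specific to ``forfeit''; and the statement that the loss ``cannot be compensated elsewhere because on any compact interval the $df$-contributions are fixed'' is not an argument. Your suggested induction---remove the bottom path and view the rest as a $(k-1)$-problem in an $(n-1)$-line environment---also fails as stated: deleting one path from a $k$-disjoint optimizer does not in general leave a $(k-1)$-disjoint optimizer, so the inductive hypothesis does not apply.

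The paper avoids this comparison entirely by arguing by contradiction. Infinite optimizers from $(0,n)^k$ to $(\infty,1)^k$ always exist as subsequential limits of finite optimizers. If one of them is not the rightmost $k$-tuple, then some path $\pi_i$ eventually settles on a line $j$ strictly above its intended home line, leaving $(t,\infty)\times\{j+1\}$ unoccupied by the tuple. The key fact you are missing is that \emph{the restriction of an optimizer to any box is itself an optimizer there}. Applied to lines $\{j,j+1\}$ on $[t,s]$, this says the path that jumps immediately at $t$ is a geodesic from $(t,j+1)$ to $(s,j)$, i.e.
\[
\sup_{r\in[t,s]}\bigl(f_{j+1}(r)-f_j(r^-)\bigr)=f_{j+1}(t)-f_j(t^-)\qquad\text{for every }s>t,
\]
which directly contradicts the unboundedness of $(f_{j+1}-f_j)^+$. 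This local-optimality reduction to a two-line problem is the missing idea; once you have it, no explicit construction or exchange argument is needed.
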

\begin{remark}
One useful perspective on our description of RSK is to focus purely on the isometry. Put an equivalence relation on $\sD^n$ by letting $f \sim g$ if $f[(x, n) \to (y, 1)] = g[(x, n) \to (y, 1)]$ for all $x, y$. From this point of view, $W$ maps $f$ to the element of its equivalence class with the leftmost disjoint optimizers,
and $M$ maps $f$ to the element of its equivalence class with the rightmost disjoint optimizers, see Section \ref{S:max-element} for a more precise setup. When thinking in these terms, idempotence and bijectivity fall out naturally.
\end{remark}

We can embed a finite time RSK correspondence into the melon map $W$, see Section \ref{SS:finite-bijection} for details. Bijectivity and other properties in the finite setting can be deduced from the simpler infinite case. Restrictions of this finite time bijection recover the usual RSK and dual RSK correspondences, see Section \ref{S:embedding}. 



Bijectivity is the reason that certain measures on $\sD^n$ have tractable pushforwards under $W$, and this is why RSK is useful in  probability. For example, if $B \in \sD^n$ consists of $n$ independent standard Brownian motions, then $WB$ is simply $n$ independent standard Brownian motions conditioned so that $B_1(t) \ge \dots \ge B_n(t)$ at all times $t$.

These results are traditionally proven using determinants and Doob transforms. We give a new computation-free proof that relies only on the bijectivity of RSK for the case of Bernoulli walks in Theorems \ref{T:Bernoulli-map} and \ref{T:Gibbs-LLN}. The advantage of this approach is that the same argument works for all the different integrable models of RSK, and so it avoids one-off computations that are specific to the details, such as  discrete vs continuous time, of  individual models. In the following theorem, we use the piecewise linear embedding of Bernoulli walks in the space of continuous functions, see Figure \ref{fig:rsksmall}. 
\begin{figure}[t]
\centering
\includegraphics[width=0.7\textwidth]{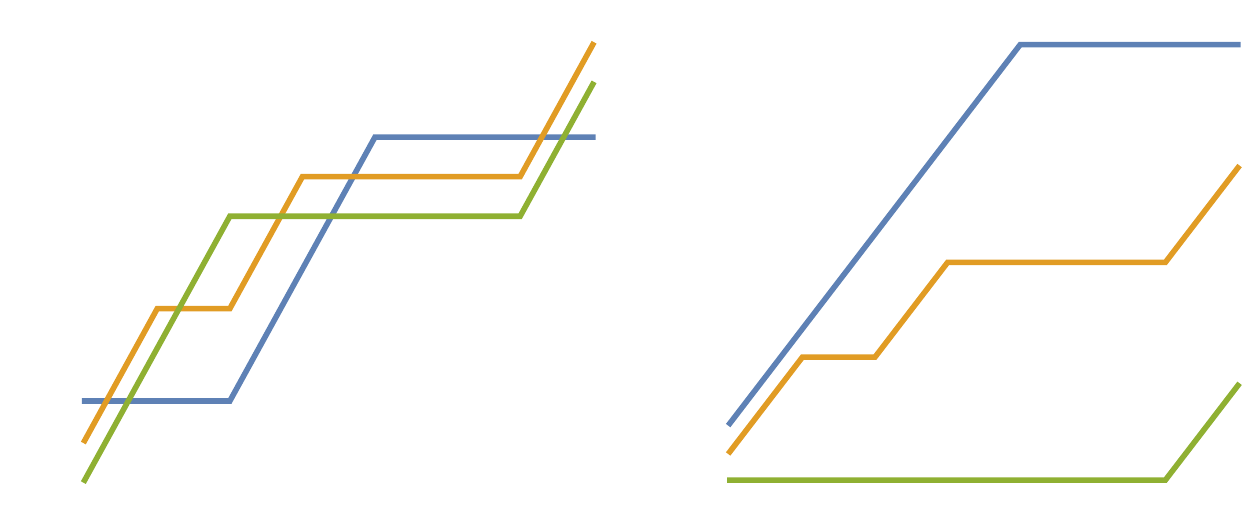}
\caption{Bernoulli walks $f$ and $Wf$}
\label{fig:rsksmall}
\end{figure}
\begin{theorem}
\label{T:Bernoulli-map-intro}
Let $Y \in \sD^n$ consist of independent Bernoulli random walks of drift $d\in \mathbb [0,1]^n$. Then the law of $WY \in \sD^n$ is nonintersecting Bernoulli walks with drift given by the order statistics of $d$. 
\end{theorem}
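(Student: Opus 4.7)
The plan is to exploit the bijectivity of $W$ (Theorem \ref{T:W-facts}(iv)) together with the product-Bernoulli weight structure, so that the pushforward law can be identified without any explicit determinantal computation.

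First I would check that $Y\in\sD^n_\downarrow$ almost surely. The successive differences $Y_{j+1}-Y_j$ are random walks whose positive part is almost surely unbounded when $d_{j+1}\ge d_j$, so Proposition \ref{P:sourness-intro} applies directly in that case; the remaining drift configurations can be handled either by a direct asymptotic analysis of the $k$-disjoint optimizers, or by approximating $d$ by generic drifts and invoking continuity of $W$. With $Y\in\sD^n_\downarrow$ secured, $W$ is a bijection onto $\sD^n_\uparrow$ on the sample space of $Y$.

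Second, I would push forward the law through the bijection. For any fixed time horizon $T$, the trajectory likelihood of $Y$ is $\prod_i d_i^{Y_i(T)}(1-d_i)^{T-Y_i(T)}$, depending only on the endpoint heights. By bijectivity, the pushforward at a nonintersecting target $z$ equals the likelihood of the unique preimage $y=W^{-1}z$. Applying the multipoint isometry (Proposition \ref{P:W=w}) to $k$-disjoint last passage values from $(0,n)$ to $(T,1)$ shows that the sorted multiset $\{y_i(T)\}$ equals the sorted multiset $\{z_i(T)\}$. Since $z$ is already nonintersecting, its endpoints are sorted, and the likelihood rewrites as a product in which each drift $d_i$ pairs with a specific sorted endpoint of $z$, determined by the bijection.

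The main obstacle is showing that after this pairing the measure on $\sD^n_\uparrow$ is precisely the nonintersecting Bernoulli law with sorted drifts. A non-computational route consistent with the paper's philosophy is to verify that $WY$ is a Markov process on $\sD^n_\uparrow$ with the correct one-step kernel, reducing the identification to a finite combinatorial check on single-step increments and then inducting on the time horizon using the idempotence of $W$ (Theorem \ref{T:W-facts}(ii)). An alternative is to approximate by the equal-drift case---where the likelihood is symmetric in the coordinates and the identification is essentially trivial---and use continuity in the drift parameter to extend to general drifts.
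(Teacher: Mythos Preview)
Your second step contains a concrete error. The claim that the sorted multiset $\{Y_i(T)\}$ equals $\{WY_i(T)\}$ is false. The isometry in Proposition \ref{P:W=w} gives $\sum_{i\le k} WY_i(T)=Y[(0,n)^k\to (T,1)^k]$, and the right side is in general strictly larger than the sum of the $k$ largest coordinates of $Y(T)$. A simple two-line example: take $Y_1$ to increase only on $[1,2]$ and $Y_2$ to increase only on $[0,1]$, each by one unit. Then $Y_1(2)=Y_2(2)=1$, but $WY_1(2)=Y[(0,2)\to(2,1)]=2$ and $WY_2(2)=0$. So the endpoint multisets differ, and your rewriting of the likelihood in terms of the sorted endpoints of $z=WY$ does not go through. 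Even if the multisets did agree, the product $\prod_i d_i^{Y_i(T)}(1-d_i)^{T-Y_i(T)}$ depends on which $d_i$ is paired with which endpoint, and that pairing is not determined by $WY$.

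This is exactly why the paper does \emph{not} work with the infinite bijection $W$ alone. The proof in Section \ref{S:preservation} uses the finite-time correspondence $\operatorname{RSK}_t$, which records both $W_tY$ and the Gelfand--Tsetlin pattern $G_tY$. The key observation is that the individual endpoints $Y_i(t)$ are encoded in $G_tY$ (via the diagonal sums $\sum_{j\le i} G_tY_j(i)=Y_1(t)+\dots+Y_i(t)$), not in $W_tY$. Hence the Radon--Nikodym derivative is $G_tY$-measurable, and conditionally on $G_tY$ the law of $W_tY$ is the uniform measure $\nu_t(WY(t))$ on ordered Bernoulli paths with that endpoint. After establishing asymptotic independence of $WY|_{[0,s]}$ and $G_tY$ as $t\to\infty$, this yields the Bernoulli Gibbs property for $WY$. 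The identification with the nonintersecting law of drift $d^\circ$ is then completed by proving the law of large numbers $WY(t)/t\to d^\circ$ (via continuity of $W$) and showing, through a monotone coupling and continuity-in-$d$ argument (Theorem \ref{T:Gibbs-LLN}), that Gibbs plus LLN determines the law uniquely. Your sketched alternatives---a one-step Markov verification or an equal-drift approximation---are not developed enough to assess, and in any case the first would reintroduce exactly the kind of computation the paper is trying to avoid.
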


Theorem \ref{T:Bernoulli-map-intro} also follows from results of \cite{o2003conditioned}, which relate RSK to a Doob transform describing nonintersecting walks, see \cite{konig2002non} for the  Doob transform approach to nonintersecting walks. We believe the non-computational proof we present is new. 

There are several subsets $A\subset \sD^n$ of paths so that RSK is a bijection between $\sD^n_\downarrow \cap A$ and $\sD^n_\uparrow\cap A$. The following table informally summarizes these sets and natural measures on them. The measures then correspond to classical integrable last passage percolation models, see Sections \ref{S:bijectivity} and \ref{S:preservation}. ``Unit jumps'' means piecewise constant functions that have jumps of size 1 only; Bernoulli paths are continuous and linear of slope $0$ or $1$ on $[j,j+1]$, $j\in \{0,1,2,\ldots\}$ and the S-J model is the Sepp\"al\"ainen-Johansson model, see \cite*{seppalainen1998exact}. In all of these cases, the image under $W$ of the natural measure on paths can be interpreted as the same measure conditioned to fall in $\sD^n_{\uparrow}$.
\[\begin{array}{l|l|l} A	&  \text{independent walk measure on $\sD^n$}  & \text{LPP model} \\ \hline
	\text{continuous functions}
	& \text{Brownian motions} &\text{Brownian LPP} \\
	\text{unit jumps} 
	& \text{Poisson counting processes}  &\text{Poisson lines LPP} \\
\mathbb N\text{ jumps  at }\mathbb N\text{ times}
	& \text{discrete-time geometric random walks}  &\text{geometric LPP} \\
\mathbb R^+\text{ jumps  at }\mathbb N\text{ times}
	& \text{discrete-time exponential random walks} &\text{exponential LPP} \\
	\text{Bernoulli paths}
	& \text{piecewise linear Bernoulli walks} &\text{S-J model}
	\end{array}
	\]
These examples also show how versions of classical RSK embed into the present framework of RSK. More precisely, usual RSK corresponds to piecewise constant nonnegative integer jumps at integer times, dual RSK corresponds to Bernoulli paths, and continuous RSK along the lines of \cite*{biane2005littelmann} embeds as continuous paths. See Section \ref{S:embedding} for proofs and more details.

\subsection*{Background}

A version of RSK first appeared in  \cite*{robinson1938representations}. The bijection was later extended in \cite*{schensted1961longest}, and in \cite*{knuth1970permutations}. Classical treatments of RSK can be found in \cite*{stanley2},  \cite*{fulton1997young}, \cite*{romik2015surprising} and \cite*{sagan2013symmetric}. \cite*{schensted1961longest} and \cite*{greene1974extension}  tied RSK to longest increasing subsequences, and therefore last passage percolation, see \cite*{vershik1977asymptotics} and \cite*{logan1977variational}. We use Greene's description as the {\it definition} of RSK. An independent line of research started with the discovery of Pitman's $2M-X$ theorem, \cite*{pitman1975one}. The two ideas were first unified in depth in \cite*{biane2005littelmann}. That work has versions of many of the results presented here, and is rooted in representation theory -- part of our goal is to give a treatment where concepts of  representation theory are not prerequisite.


Versions of the isometry property and its strengthening (Proposition \ref{P:W=w}.(i)) were shown in \cite*{noumi2002tropical}, and also in \cite*{biane2005littelmann}, and \cite*{DOV}. Theorem \ref{T:W-facts} (ii) is more classical, and can be shown with a path-crossing argument. 

There are other  generalizations for RSK. Geometric RSK is a finite temperature version of ordinary RSK initiated by \cite*{kirillov2001introduction}, see also \cite*{noumi2002tropical}, \cite*{corwin2014tropical}. \cite*{noumi2002tropical} have finite and zero temperature versions of many of the results presented here, obtained using matrix methods.

In particular, isometry in the geometric setting was shown in \cite*{noumi2002tropical}, see also \cite*{corwin2020invariance} and \cite*{dauvergne2020hidden}. Further extensions, including randomized versions are studied in  \cite*{o2013q}, \cite*{bufetov2018hall},  \cite*{garver2018minuscule}, 
\cite*{aigner2020q}, and \cite*{dauvergne2020hidden}.








\section{Percolation across cadlag functions}
	\label{S:lpp-cadlag}
	
%
\subsection{Basic definitions}

Recall that a function $f$ from an interval $I$ to $\R$ is {\bf cadlag} if for all $x \in I$, we have 
\begin{equation}
\label{E:fyfx}
\lim_{y \to x^{+}} f(y) = f(x), \quad \mathand \quad \lim_{y \to x^{-}} f(y) \text{ exists }.
\end{equation}
Note that either one of these limits may not be defined if $x$ is an endpoint of $I$. We write $f(x^-)$ for the second limit in \eqref{E:fyfx}. When $f(x^-) \ne f(x)$, we say that $f(x) - f(x^-)$ is a \textbf{jump} of $f$ and that $x$ is a \textbf{jump location}.
Cadlag functions can only have countably many jumps. Let $\sD^n$ be the space of all functions  
	$$
	f:[0, \infty) \X \{1, \dots, n\} \to \R, \qquad (x,i)\mapsto f_i(x).
	$$
	so that each $f_i$ is a cadlag function whose jumps are all positive and satisfies $f_i(0) \ge 0$. 
	We impose that $f_i(0^-) = 0$ for all $i$. If $f_i(0) > 0$, we interpret this as $f$ having a jump at $0$. The boundary condition $f_i(0^-) = 0$ is simply a convention for us since we will only care about the increments of $f$. We will often think of $f$ as a sequence of functions $f_1, \dots, f_n$. When visualizing $f$  we will think in matrix coordinates, so that line $1$ is on top and line $n$ is on the bottom.

	We associate to any $f \in \sD^n$ a finitely additive signed measure $df$ on $[0, \infty) \X \{1, \dots, n\}$ given by
	$$
	df\left([x,y]\times\{i\}\right)=f_{i}(y) - f_i(x^-).
	$$ 
Our boundary convention $f_i(0^-) = 0$ means that we can always reconstruct $f \in \sD^n$ from its measure $df$. 
	
	Now, for $p = (x, n), q = (y, m) \in [0, \infty) \X \Z$ with $x \le y, n \ge m$, a {\bf path} $\pi$ from $p$ to $q$ is a union of closed intervals 
	\begin{equation}
	\label{E:intervals}
	[t_i,t_{i-1}]\times \{i\}\subset   \mathbb R \times I, \qquad i=m,m+1,\ldots,n,
	\end{equation}
	where
	\begin{equation}
	\label{E:jumptimes}
	x=t_n\le t_{n-1}\le \cdots \le t_m \le  t_{m-1}=y,
	\end{equation}
see Figure \ref{fig:essentially}.
	The points $t_i, i = m, \dots, n - 1$ are called the {\bf jump times} of $\pi$. For $f \in \sD^n$ and a path $\pi$ contained in $\R\X \{1, \dots, n\}$, we can define the \textbf{length} of $\pi$ with respect to $f$ by 
	$$
	|\pi|_f = df(\pi) = \sum_{i=m}^{n} f_{i}(t_{i-1}) - f_i(t_{i}^-).
	$$
	This definition is chosen so that all the jumps of $f$ that lie along the path $\pi$ are accounted for. For $f \in \sD^n$ and $u=(p;q)=(x,n;y,m)\in (\R \X \{1, \dots, n\})^2$  define the \textbf{last passage value} across $f$ from $p$ to $q$ by
	\begin{equation}
	\label{E:lpp-def}
	f[u]=f[p\to q]=f[(x, n) \to (y, m)] = \sup_{\pi} |\pi|_f,
	\end{equation}
	where the supremum is taken over all paths $\pi$ from $p$ to $q$. If no path from $p$ to $q$ exists, we set $f[u] = -\infty$. We call a path $\pi$ from $p$ to $q$ a \textbf{geodesic} if $|\pi|_f = f[p \to q]$.

	\subsection{Multiple paths}
	
	Next, we generalize last passage values to multiple paths. 
	First, for two paths $\pi$ and $\rho$, We say that $\pi$ is {\bf to the left of} $\rho$ if for every $(x,\ell)\in \pi$ and $(y,m)\in \rho$ at least one of the inequalities $\ell \le m$ and $x\le y$ holds. Equivalently, we say that $\rho$ {\bf is to the right of} $\pi$. 
	We say $\pi, \rho$  are {\bf essentially disjoint} if the set $\pi \cap \rho$ is finite. Recall that
	we think in matrix coordinates, so that line $1$ is on top and line $n$ is on the bottom, see Figure \ref{fig:essentially}.

	Let $\mathbf{p} = (p_1, \dots, p_k), \mathbf{q} = (q_1, \dots, q_k) \in ([0, \infty) \X \Z)^k$. 
	A \textbf{disjoint $k$-tuple (of paths)} $\pi=(\pi_1,\ldots ,\pi_k)$ from $\mathbf{p}$ to $\mathbf{q}$ is defined by the following properties:
	\begin{itemize}[nosep]
		\item $\pi_i$ is a path from $p_i$ to $q_i$,
		\item $\pi_i$ is to the left of $\pi_j$ for $i < j$,
		\item $\pi_i$ and $\pi_j$ are essentially disjoint for all $i \ne j$.
	\end{itemize}
	For $f \in \sD^n$ and a disjoint $k$-tuple $\pi$ let $\cup \pi := \pi_1 \cup \dots \cup \pi_k \sset [0, \infty) \X \{1, \dots, n\}$, and define the \textbf{length} of $\pi$ by
	$$
	|\pi|_f=df(\cup \pi).
	$$
	For $\mathbf u = (\mathbf p, \mathbf q)$, we can then define the {\bf multi-point last passage value}
	$$
	f[\mathbf u] = f[{\bf p \to \bf q}] = \sup_{\pi} |\pi|_f,
	$$
	where the supremum is over disjoint $k$-tuples from $\bf p$ to $\bf q$. Again, if no such $k$-tuples exist, we set $f[\mathbf u] = -\infty$.
	We call a $k$-tuple $\pi$ satisfying $f[{\bf p \to \bf q}] = |\pi|_f$ a {\bf disjoint optimizer}. 
	
	\subsection{Basic geometric properties}
	
Next, we collect some basic geometric facts about last passage paths. We start by showing that disjoint optimizers always exist. As in the previous section, $f \in \sD^n$ and let $\mathbf{p} = (p_1, \dots, p_k), \mathbf{q} = (q_1, \dots, q_k) \in ([0, \infty) \X \Z)^k$.

	\begin{lemma}
		\label{L:multipath-exist} If there is at least one disjoint $k$-tuple from $\bp$ to $\bq$, then there exists a disjoint optimizer for $f$ from $\mathbf p$ to $\mathbf q$.
	\end{lemma}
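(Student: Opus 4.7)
The plan is to argue by compactness and upper-semicontinuity. A disjoint $k$-tuple $\pi$ is determined by the finite collection of jump times $\{t^{(j)}_i\}$ of its constituent paths, each lying in a bounded interval $[x_j, y_j]$. The within-path ordering and the left-of relation between paths are closed conditions on these parameters, so the configuration space embeds as a bounded subset of Euclidean space with compact closure. My first step will be to take a maximizing sequence $\pi^{(N)}$ with $|\pi^{(N)}|_f \to f[\bp \to \bq]$, and, by compactness, pass to a subsequential limit $\pi^*$ whose jump times are the componentwise limits.

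Next I will verify that the length functional is upper-semicontinuous in the jump-time parameters. Since each $f_i$ is cadlag with only positive jumps, at every $x$ one has $\limsup_{y \to x} f_i(y) = \max\{f_i(x), f_i(x^-)\} = f_i(x)$, so $f_i$ is USC; dually, the left-limit map $y \mapsto f_i(y^-)$ is LSC. Hence each summand $f_i(t_{i-1}) - f_i(t_i^-)$ appearing in a single-path length is USC in the jump times, and so is $\sum_j |\pi_j|_f$. Because $|\pi|_f = df(\cup\pi) \le \sum_j |\pi_j|_f$ in general, passing to the limit in the maximizing sequence gives $\sum_j |\pi^*_j|_f \ge \limsup_N |\pi^{(N)}|_f = f[\bp \to \bq]$.

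The main obstacle I anticipate is that $\pi^*$ need not itself be essentially disjoint: two paths can merge along a whole interval in the limit, in which case $df(\cup\pi^*)$ may be strictly smaller than $\sum_j |\pi^*_j|_f$, since jumps at shared points get counted twice on the right. To handle this, I would perturb $\pi^*$ infinitesimally in a direction compatible with the left-of ordering---the positivity of the jumps together with the USC bound guarantees no loss of length under such a perturbation---to produce a genuinely essentially disjoint $k$-tuple of maximal length. Alternatively, and more cleanly, one can define a canonical representative by selecting the ``leftmost'' maximizer in the equivalence class of tuples of maximal length, which is automatically essentially disjoint by construction. Either route upgrades the subsequential limit to a bona fide disjoint optimizer.
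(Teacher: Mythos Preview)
Your overall strategy—compactness plus upper semicontinuity—is exactly the paper's approach, but two technical points go astray.

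First, your anticipated obstacle is not real: essential disjointness \emph{is} preserved in the limit. Once the left-of ordering on jump times is in place (that is, $t^{(j)}_i \le t^{(j+1)}_i$ for all $i,j$), essential disjointness on line $i$ between consecutive paths $\pi_j,\pi_{j+1}$ is exactly the closed inequality $t^{(j)}_{i-1} \le t^{(j+1)}_i$. Hence the space of disjoint $k$-tuples is a closed and bounded, hence compact, subset of the jump-time parameter space; this is the content of the paper's compactness lemma. So $\pi^*$ is automatically a disjoint $k$-tuple, and neither perturbation nor a ``leftmost representative'' repair is needed—those repairs are in any case not justified as written (perturbing a path off an atom \emph{does} lose that atom's weight).

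Second, and more substantively, your USC argument has a genuine gap that you have misdiagnosed. You correctly establish that $\sum_j |\pi_j|_f$ is USC and conclude $\sum_j |\pi^*_j|_f \ge f[\bp \to \bq]$. But even for an essentially disjoint tuple, $|\pi^*|_f = df(\cup\pi^*)$ can be strictly smaller than $\sum_j |\pi^*_j|_f$: whenever two paths touch at a point carrying an atom of $df$, that atom is counted twice in the sum but once in the union. So your bound does not yield $|\pi^*|_f \ge f[\bp \to \bq]$. The paper instead asserts USC of $\pi \mapsto df(\cup\pi)$ directly. One clean way to justify this is to split $df_i = dc_i + dJ_i$ into its continuous part (for which touching points are harmless and the length is continuous in the jump times) and its nonnegative pure-jump part (for which Hausdorff inclusion $\cup\pi^{(N)}\subset(\cup\pi^*)^\epsilon$ gives monotonicity, and continuity of measure from above gives the limit as $\epsilon\to 0$).
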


Lemma \ref{L:multipath-exist} is an immediate consequence of the following two  observations. Each of these next lemmas is also useful in its own right.

\begin{lemma} [Compactness]
	\label{L:cty-lpp}
	The space of disjoint $k$-tuples from $\mathbf p$ to $\mathbf q$ is compact in the Hausdorff topology on $([0, \infty) \X \{1, \dots, n\})^k$.
\end{lemma}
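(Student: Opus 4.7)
My plan is to realize the space of disjoint $k$-tuples as a continuous image of a compact Euclidean set, via parameterization by jump times. Any path $\pi_j$ from $p_j = (x_j, n_j)$ to $q_j = (y_j, m_j)$ is uniquely specified by its jump-time tuple $(t^{(j)}_{m_j}, \ldots, t^{(j)}_{n_j-1})$, which lies in the compact simplex
\[
\Delta_j = \{(s_{m_j}, \ldots, s_{n_j-1}) : x_j \le s_{n_j-1} \le \cdots \le s_{m_j} \le y_j\}.
\]
The parameterization map $\Phi_j$ sending a jump-time tuple to $\bigcup_{i=m_j}^{n_j} [t^{(j)}_i, t^{(j)}_{i-1}] \X \{i\}$ (with the boundary conventions $t^{(j)}_{n_j}=x_j$ and $t^{(j)}_{m_j-1}=y_j$) is continuous into the Hausdorff topology on closed subsets of $[0,\infty)\X\{1,\ldots,n\}$, since a small perturbation of jump times produces a small Hausdorff perturbation of the resulting union of intervals. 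Moreover $\Phi_j$ is injective (the jump times can be read off as corner coordinates of the path), so it is a homeomorphism onto its image.

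Next, I translate the disjoint $k$-tuple conditions into closed constraints on the compact product $\prod_{j=1}^k \Delta_j$. The ordering ``$\pi_j$ is to the left of $\pi_{j'}$ for $j<j'$'' unpacks, via the definition, to the requirement that for every pair of levels $\ell \ge m$ with $\ell \in [m_j, n_j]$ and $m \in [m_{j'}, n_{j'}]$, one has $t^{(j)}_{\ell-1} \le t^{(j')}_m$. This is a finite system of weak inequalities, hence a closed condition. Given this ordering, essential disjointness at any shared level $i$ reduces to the single inequality $t^{(j)}_{i-1} \le t^{(j')}_i$, which automatically yields disjoint interiors for the level-$i$ intervals. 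Thus the set of jump-time tuples that parameterize disjoint $k$-tuples is a closed subset of the compact set $\prod_j \Delta_j$, and pushing forward under $\prod_j \Phi_j$ gives the desired compactness in the Hausdorff topology.

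The main subtlety to address is that essential disjointness of intervals is not automatically preserved under Hausdorff limits, since taking interiors does not commute with Hausdorff limits in general. The resolution is the observation above: once the left-to-right ordering is imposed, essential disjointness is encoded by the very same family of weak inequalities on jump times that encodes the ordering itself, and is therefore closed by inspection. A secondary technical point is that paths from $\mathbf{p}$ to $\mathbf{q}$ are automatically contained in a fixed compact rectangle determined by $\mathbf{p}, \mathbf{q}$, so we may work inside a compact metric space throughout and need not worry about escape to infinity.
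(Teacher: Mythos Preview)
Your argument is correct and is precisely the natural unpacking of what the paper means by ``immediate from the definitions'': parameterize each path by its jump-time simplex, observe that the ordering and essential-disjointness constraints are a finite system of weak inequalities on these coordinates, and push forward the resulting closed subset of a compact product under the continuous (indeed bijective) parameterization map. One small slip: the ``to the left'' condition alone only forces $t^{(j)}_{\ell-1}\le t^{(j')}_m$ for $\ell>m$ (not $\ell\ge m$), since when $\ell=m$ the disjunct $\ell\le m$ is automatically satisfied; the case $\ell=m$ is exactly the essential-disjointness inequality you state separately, so the combined system is still closed and your conclusion stands.
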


Lemma \ref{L:cty-lpp} is immediate from the definitions. 

\begin{lemma} [Upper semicontinuity]
\label{L:usc-lpp}
For any $f \in \sD^n$, the function $\pi \mapsto |\pi|_f$ mapping disjoint $k$-tuples to their $f$-length is upper semicontinuous in the Hausdorff topology.
\end{lemma}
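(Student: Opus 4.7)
The plan is to decompose each $f_i$ as $f_i = f_i^c + f_i^j$, with $f_i^j(t) = \sum_{0\le s\le t}[f_i(s)-f_i(s^-)]$ the pure jump part and $f_i^c = f_i - f_i^j$ the continuous part, and to argue that these two pieces contribute to $|\pi|_f$ separately: the jump part gives an upper semicontinuous contribution via outer regularity of a positive Radon measure, while the continuous part gives a genuinely continuous contribution. Since each $f_i$ is cadlag and $\R$-valued, the total jump mass in any bounded interval is finite, so $f_i^j$ is a well-defined nondecreasing cadlag function, $f_i^c$ is genuinely continuous, and $df_i = df_i^j + df_i^c$.

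First I would reduce to a level-by-level statement. If $C_i\subset[0,\infty)$ denotes the projection of $\cup\pi$ to level $i$, then $|\pi|_f = \sum_{i=1}^n df_i(C_i)$. Hausdorff convergence $\pi^{(m)}\to\pi$ of $k$-tuples is equivalent to coordinate-wise Hausdorff convergence of paths, and hence to convergence of all jump times, i.e.\ of the endpoints $a_\ell^{(m,i)}\to a_\ell^i$ and $b_\ell^{(m,i)}\to b_\ell^i$ of the segment of $\pi_\ell$ at level $i$ for every $\ell$ in the fixed index set $L_i$ of paths having a segment at that level. In particular $C_i^{(m)}\to C_i$ in Hausdorff in $[0,\infty)$.

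For the jump part, $df_i^j$ corresponds to a positive Radon measure $\mu_i^j$ on $[0,\infty)$. Outer regularity supplies, for any $\varepsilon>0$, an open $U\supset C_i$ with $\mu_i^j(U)\le \mu_i^j(C_i)+\varepsilon$; Hausdorff convergence gives $C_i^{(m)}\subset U$ for all large $m$, and therefore $\mu_i^j(C_i^{(m)}) \le \mu_i^j(C_i)+\varepsilon$, so $\limsup_m \mu_i^j(C_i^{(m)}) \le \mu_i^j(C_i)$. For the continuous part, $df_i^c(C_i^{(m)}) = \sum_{\ell\in L_i}[f_i^c(b_\ell^{(m,i)}) - f_i^c(a_\ell^{(m,i)})]$, which converges to $\sum_{\ell\in L_i}[f_i^c(b_\ell^i) - f_i^c(a_\ell^i)] = df_i^c(C_i)$ by continuity of $f_i^c$. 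Adding the two parts and summing over $i$ yields $\limsup_m |\pi^{(m)}|_f \le |\pi|_f$.

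The main subtlety is that $f_i^c$ can have unbounded variation, so $df_i^c$ is only a finitely additive charge on finite unions of intervals and not a genuine Borel measure; what rescues the continuity argument is that continuity of $f_i^c$ makes the overlap correction at a touching point $b_\ell^i = a_{\ell+1}^i$ vanish, so $df_i^c(C_i)$ has the clean expression $\sum_{\ell\in L_i}[f_i^c(b_\ell^i) - f_i^c(a_\ell^i)]$ regardless of whether adjacent segments touch or leave a gap. Without first decomposing off the jumps, a direct analysis at the level of jump times splits into cases depending on whether approximating segments touch or separate at a point carrying a jump of $f$, and one has to verify a non-trivial cancellation between left and right limits to confirm that the limit length does decrease in those problematic configurations; the decomposition approach sidesteps this case analysis entirely.
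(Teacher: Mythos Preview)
Your decomposition $f_i = f_i^c + f_i^j$ breaks down: the assertion that ``since each $f_i$ is cadlag and $\R$-valued, the total jump mass in any bounded interval is finite'' is false. For a counterexample in $\sD^1$, put a jump of size $1/n$ at $t_n = 1 - 2^{-n}$ for each $n \ge 1$, let $f$ decrease linearly back to $0$ on each interval $[t_n, t_{n+1})$, and set $f \equiv 0$ on $[0,1/2)$ and on $[1,\infty)$. This $f$ is cadlag, bounded, has only positive jumps, and lies in $\sD^1$, yet $\sum_n 1/n = \infty$. Then $f^j(t) \to \infty$ as $t \uparrow 1$, so $f^j$ is not a real-valued function, $df^j$ is not a Radon measure (so your outer-regularity step is unavailable), and $f^c = f - f^j$ is undefined. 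Both halves of your argument rest on this decomposition, so neither survives.

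The paper's proof sidesteps any decomposition with a one-line interval-endpoint estimate: if $a_m \to a$ and $b_m \to b$, then right-continuity of $f_i$ gives $\limsup_m f_i(b_m) \le f_i(b)$, and positivity of the jumps gives $\liminf_m f_i(a_m^-) \ge f_i(a^-)$, whence $\limsup_m df_i([a_m,b_m]) \le df_i([a,b])$. Summing over the finitely many segments of $\cup\pi$ yields the lemma directly. Your approach could in principle be repaired by splitting off only the finitely many jumps of size exceeding $\epsilon$ and controlling the remainder uniformly, but that is strictly more work than the direct argument.
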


Lemma \ref{L:usc-lpp} is a consequence of the fact that if $a_m \to a, b_m \to b$, then since $f \in \sD^n$ has only positive jumps, 
$$
\limsup_{m \to \infty} df([a_m, b_m] \X \{i\}) \le df([a, b] \X \{i\}). 
$$
Next, we give a metric composition law. The proof is again immediate from the definitions. For this lemma and in the sequel, we use the shorthand notation  $$(t, \mathbf m) = \Big((t, m_1),   \dots, (t, m_k)\Big) \quad \text{and similarly} \quad (\bx, \ell) = \Big((x_1, \ell), \dots,(x_k, \ell)\Big).$$

\begin{lemma}
	[Metric composition law]\label{L:split-path}
	Let $f \in \sD^n$, let $(\bp, \bq) = (\bx, \ell; \by, m) \in ([0, \infty) \X \{1, \dots, n\})^k$ and let $i \in \{m + 1, \dots, \ell\}$. Then
	$$
	f[\bp \to \bq] = \max_{\bz \in [0, \infty)^k} f[\bp \to (\bz, i)] +  f[(\bz, i - 1) \to \bq].
	$$
\end{lemma}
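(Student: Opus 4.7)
The plan is to prove the two inequalities separately, using concatenation for $\ge$ and slicing at the transition between rows $i$ and $i-1$ for $\le$.

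For the inequality $\ge$, I would fix any $\bz = (z_1,\dots,z_k) \in [0,\infty)^k$ for which the two quantities on the right are finite (otherwise there is nothing to show). By Lemma \ref{L:multipath-exist}, pick a disjoint optimizer $\pi^{\mathrm{top}}$ from $\bp$ to $(\bz, i)$ (using only rows $i,\dots,\ell$) and a disjoint optimizer $\pi^{\mathrm{bot}}$ from $(\bz, i-1)$ to $\bq$ (using only rows $m,\dots,i-1$). Set $\sigma_j = \pi^{\mathrm{top}}_j \cup \pi^{\mathrm{bot}}_j$. Since the two halves live on disjoint sets of rows, essential disjointness within $\sigma$ is inherited from the disjointness of $\pi^{\mathrm{top}}$ and $\pi^{\mathrm{bot}}$. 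For the left-right ordering: within each half it is inherited; across halves it reduces to checking $z_a \le z_b$ for $a<b$, which is forced by the fact that $f[\bp \to (\bz, i)]$ is finite (otherwise there is no valid disjoint tuple ending at $(\bz,i)$ in the first place). Lengths add because the two halves live on disjoint rows, so $|\sigma|_f = |\pi^{\mathrm{top}}|_f + |\pi^{\mathrm{bot}}|_f$. Thus $\sigma$ witnesses $f[\bp \to \bq] \ge f[\bp \to (\bz,i)] + f[(\bz,i-1) \to \bq]$.

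For the inequality $\le$, if no disjoint $k$-tuple from $\bp$ to $\bq$ exists then the left side is $-\infty$, and moreover for every $\bz$ at least one of the two half-terms is $-\infty$ (otherwise their concatenation, as above, produces a disjoint tuple from $\bp$ to $\bq$, a contradiction). So assume a disjoint $k$-tuple exists. Using Lemma \ref{L:multipath-exist} pick a disjoint optimizer $\pi$ from $\bp$ to $\bq$. For each $j$, the path $\pi_j$ is parametrized by jump times $t_j^n \le t_j^{n-1} \le \cdots$; let $z_j$ be the value $t_j^{i-1}$, the time at which $\pi_j$ transitions from row $i$ to row $i-1$. Define $\pi_j^{\mathrm{top}}$ to consist of the pieces of $\pi_j$ on rows $\ell, \ell-1, \dots, i$ (a path from $p_j$ to $(z_j, i)$), and $\pi_j^{\mathrm{bot}}$ the pieces on rows $i-1, \dots, m$ (a path from $(z_j, i-1)$ to $q_j$). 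Since $\pi^{\mathrm{top}}$ and $\pi^{\mathrm{bot}}$ are each obtained by restricting $\pi$ to a fixed row range, they inherit essential disjointness and left-right ordering, giving valid disjoint $k$-tuples from $\bp$ to $(\bz, i)$ and from $(\bz, i-1)$ to $\bq$ respectively. Since $\cup \pi = (\cup \pi^{\mathrm{top}}) \sqcup (\cup \pi^{\mathrm{bot}})$, lengths add, so
\[
f[\bp \to \bq] = |\pi|_f = |\pi^{\mathrm{top}}|_f + |\pi^{\mathrm{bot}}|_f \le f[\bp \to (\bz, i)] + f[(\bz, i-1) \to \bq].
\]

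The main thing to be careful about is the bookkeeping around the transition times: one must verify that splitting a path at $z_j$ does not create or destroy any increments of $df$, which is immediate because $df$ counts each row's interval separately and the splitting assigns rows $\ge i$ to the top piece and rows $\le i-1$ to the bottom piece. The decomposition in the $\le$ step also automatically exhibits a $\bz$ at which the supremum is attained, so writing $\max$ rather than $\sup$ is justified. No compactness or semicontinuity argument is needed beyond the existence of disjoint optimizers provided by Lemma \ref{L:multipath-exist}.
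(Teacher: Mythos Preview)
Your approach---concatenate for $\ge$, slice at the row-$i$/row-$(i-1)$ transition for $\le$---is exactly what the paper intends by ``immediate from the definitions,'' and your treatment of the $\le$ direction is clean.

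There is one genuine error in the $\ge$ direction. You claim that finiteness of $f[\bp\to(\bz,i)]$ forces $z_a\le z_b$ for $a<b$; this is false under the paper's definition of ``to the left of.'' Take $k=2$, $\bp=(0,\ell)^2$ with $\ell>i$, and $z_1>z_2>0$. Let $\pi^{\mathrm{top}}_1$ jump immediately to row $i$ and run to $(z_1,i)$, while $\pi^{\mathrm{top}}_2$ runs along row $\ell$ to time $z_2$ and then drops to row $i$. For any $(x,a)\in\pi^{\mathrm{top}}_1$ and $(y,b)\in\pi^{\mathrm{top}}_2$ one has either $a=i\le b$ or $x=0\le y$, so $\pi^{\mathrm{top}}_1$ is to the left of $\pi^{\mathrm{top}}_2$, and they are essentially disjoint. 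A symmetric construction makes $f[(\bz,i-1)\to\bq]$ finite as well. The concatenation $\sigma$ then fails the left--right ordering: the pair $(z_1,i)\in\sigma_1$ and $(z_2,i-1)\in\sigma_2$ violates both $i\le i-1$ and $z_1\le z_2$.

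The repair is short. Having formed the essentially disjoint but mis-ordered paths $\sigma_1,\dots,\sigma_k$ from $p_j$ to $q_j$, replace them by their order statistics: on each row $r$, assign the $j$th leftmost of the $k$ intervals to the $j$th new path. This is the same uncrossing used in the proof of Lemma~\ref{L:quadrangle}; it produces a valid disjoint $k$-tuple from $\bp$ to $\bq$ with the same union $\cup\sigma$ and hence the same $f$-length, giving $f[\bp\to\bq]\ge|\sigma|_f=f[\bp\to(\bz,i)]+f[(\bz,i-1)\to\bq]$ as needed.
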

More general versions of the metric composition law exist, though we do not require them here. Lemma \ref{L:split-path} implies certain triangle inequalities for last passage values, reinforcing the idea that the last passage structure is best thought of as a metric.

We end this section with an extremely useful \textbf{quadrangle inequality} for multi-point last passage values. This inequality generalizes a well-known quadrangle inequality for single-point last passage values.

\begin{lemma}
	\label{L:quadrangle}
	Let $(\bp, \bq) = (\bx, n; \by, m), (\bp', \bq') = (\bx', n; \by', m) \in ([0, \infty) \X \{1, \dots, n\})^{2k}$ be such that $x_i \le x_i', y_i \le y_i'$ for all $i$. Then
	$$
	f[\bp \to \bq'] + f[\bp' \to \bq] \le f[\bp \to \bq] + f[\bp' \to \bq'].
	$$
\end{lemma}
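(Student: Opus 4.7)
The plan is to prove the inequality by constructing specific disjoint $k$-tuples $\sigma^\ast$ from $\bp$ to $\bq$ and $\tau^\ast$ from $\bp'$ to $\bq'$ with combined $f$-length at least $f[\bp\to\bq']+f[\bp'\to\bq]$; by the definition of multi-point last passage this yields the lemma at once. We may assume both $f[\bp\to\bq']$ and $f[\bp'\to\bq]$ are finite, so Lemma~\ref{L:multipath-exist} supplies disjoint optimizers $\sigma=(\sigma_1,\dots,\sigma_k)$ from $\bp$ to $\bq'$ and $\tau=(\tau_1,\dots,\tau_k)$ from $\bp'$ to $\bq$ whose total length equals the left-hand side.

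For each $i$, $\sigma_i$ runs from $(x_i,n)$ to $(y_i',m)$ and $\tau_i$ from $(x_i',n)$ to $(y_i,m)$. The hypotheses $x_i\le x_i'$ and $y_i\le y_i'$ force $\sigma_i$ to start weakly left of $\tau_i$ but end weakly right of it, and since each is a connected staircase visiting every level of $\{m,\dots,n\}$, a short intermediate-value argument produces a common point $(t_i,j_i)\in\sigma_i\cap\tau_i$. Splitting $\sigma_i=A_i\cup B_i$ and $\tau_i=C_i\cup D_i$ at this point, set
\[
\sigma_i^\ast:=A_i\cup D_i,\qquad \tau_i^\ast:=C_i\cup B_i,
\]
which are paths from $p_i$ to $q_i$ and from $p_i'$ to $q_i'$ respectively. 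Since $df(\{(t_i,j_i)\})$ equals the jump of $f_{j_i}$ at $t_i$ and the four pieces overlap pairwise only at this single point, a one-line inclusion--exclusion on the signed measure $df$ yields the pairwise length balance
\[
|\sigma_i^\ast|_f+|\tau_i^\ast|_f=|\sigma_i|_f+|\tau_i|_f.
\]

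Summing over $i$ would finish the proof were it not for the requirement that $\sigma^\ast=(\sigma_i^\ast)_i$ and $\tau^\ast=(\tau_i^\ast)_i$ themselves be disjoint $k$-tuples in the sense of Section~\ref{S:lpp-cadlag}, and I expect this bookkeeping to be the main obstacle: pairwise tail-swaps performed independently for each $i$ can introduce intrafamily crossings between, say, $\sigma_i^\ast$ and $\sigma_{i+1}^\ast$. The plan to handle this is to choose the swap points $(t_i,j_i)$ coherently across $i$ using a canonical rule, for example the lexicographically earliest element of $\sigma_i\cap\tau_i$, so that the original left-to-right orderings within $\sigma$ and within $\tau$ propagate to $\sigma^\ast$ and $\tau^\ast$, with any residual intrafamily crossing forced to contradict the disjointness of either original optimizer. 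Should any bad crossings survive this canonical choice, further local uncrossings can be performed iteratively: each preserves total $f$-length by the same inclusion--exclusion identity and strictly decreases the finite integer invariant counting pairs of same-family paths that still share a point. The procedure thus terminates in finitely many steps at disjoint $k$-tuples $\sigma^\ast,\tau^\ast$ realizing the desired bound, completing the proof.
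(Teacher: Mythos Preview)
Your overall strategy---take optimizers for the two ``crossed'' problems and rearrange them into candidates for the ``uncrossed'' problems while preserving total $f$-length---is exactly the paper's. The gap is in the rearrangement step.

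A single tail-swap of $\sigma_i$ and $\tau_i$ at one meeting point does not, even with a canonical choice of that point, guarantee that $(\sigma_i^\ast)_i$ is a disjoint $k$-tuple. The pieces $A_i\subset\sigma_i$ and $D_{i+1}\subset\tau_{i+1}$ come from \emph{different} original optimizers and carry no mutual disjointness or ordering constraint, so $\sigma_i^\ast$ and $\sigma_{i+1}^\ast$ can overlap on a full interval. Your fallback of iterated local uncrossings is underspecified and problematic: a tail-swap between $\sigma_i^\ast$ and $\sigma_{i+1}^\ast$ does not preserve their (distinct) endpoints, and if you instead mean a min/max replacement, the ``strictly decreasing integer invariant'' you invoke is not correct---taking $(\min,\max)$ of two paths that share an interval leaves them still sharing that interval.

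The paper sidesteps all of this by never swapping at a single point. For each $i$ it sets $\tau^\ell_i$ to be the \emph{leftmost} path from $p_i$ to $q_i$ contained in $\sigma_i\cup\tau_i$ and $\tau^r_i$ the \emph{rightmost} path from $p'_i$ to $q'_i$ in the same union; in jump-time coordinates these are simply the coordinatewise $\min$ and $\max$. This preserves total length (same union, same intersection, so the same inclusion--exclusion identity you wrote down holds) and produces disjoint $k$-tuples in one stroke: monotonicity of $\min$ gives $\tau^\ell_i\le\tau^\ell_{i+1}$, and a short check shows essential disjointness is inherited from the original pairs $(\sigma_i,\sigma_{i+1})$ and $(\tau_i,\tau_{i+1})$. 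No canonical crossing choice and no iteration are needed.
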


This is a special case of Lemma 2.4 in \cite*{dauvergne2021disjoint} generalized to the cadlag setting.  
\begin{proof}
	Let $\pi$ be a disjoint optimizer from $\bp$ to $\bq'$, and let $\pi'$ be a disjoint optimizer from $\bp'$ to $\bq$. We can define disjoint $k$-tuples $\tau^\ell, \tau^r$ as follows. For each $i$, let $\tau^\ell_i$ be the leftmost path from $p_i$ to $q_i$ contained in the union $\pi_i \cup \pi_i'$ and let $\tau^r_i$ be the rightmost path from $p_i'$ to $q_i'$ contained in $\pi_i \cup \pi_i'$. We can think of $\tau^\ell_i, \tau^r_i$ as order statistics of $\pi_i, \pi'_i$. With this construction, $\tau^\ell$ is a disjoint $k$-tuple from $\bp$ to $\bq$, $\tau^r$ is a disjoint $k$-tuple from $\bp'$ to $\bq'$, and 
	\begin{equation*}
	|\pi|_f + |\pi'|_f = |\tau^\ell|_f + |\tau^r|_f.
	\end{equation*}
	The left side above equals $f[\bp \to \bq'] + f[\bp' \to \bq]$ and the right side is bounded above by $f[\bp \to \bq] + f[\bp' \to \bq']$.
\end{proof}

A similar proof idea to Lemma \ref{L:quadrangle} shows that rightmost and leftmost optimizers always exist. Again, this is a generalization of Lemma 2.2 in \cite{dauvergne2021disjoint} to the cadlag setting. To state the lemma, for disjoint $k$-tuples $\lambda, \pi$, we write $\lambda \le \pi$ and say that $\la$ is \textbf{to the left} of $\pi$ if $\lambda_i$ is to the left of $\pi_i$ for every $i$.

\begin{lemma}
	\label{L:leftmost-exist}
		Let $(\bp, \bq) = (\bx, n; \by, m)$ be such that there is at least one disjoint $k$-tuple from $\bp$ to $\bq$. Then for any $f \in \sD^n$, there are optimizers $\rho, \lambda$ across $f$ from $\bp$ to $\bq$ such that for any optimizer $\pi$ from $\bp$ to $\bq$, we have $\rho \le \pi \le \lambda$. We call $\rho, \lambda$ the \textbf{rightmost} and \textbf{leftmost} optimizers from $\bp$ to $\bq$.
\end{lemma}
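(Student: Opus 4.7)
The plan is to adapt the order-statistics construction from the proof of Lemma \ref{L:quadrangle}, specialized to matched endpoints, and then extract extremal optimizers by compactness. Let $O$ denote the set of optimizers from $\bp$ to $\bq$; this is nonempty by Lemma \ref{L:multipath-exist}. By Lemma \ref{L:cty-lpp} the ambient space of disjoint $k$-tuples is compact, and by Lemma \ref{L:usc-lpp} the length functional $\pi \mapsto |\pi|_f$ is upper semicontinuous, so $O$ is a closed, hence compact, subset.

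The next step is to show that $O$ is closed under pointwise meet and join. Given $\pi, \pi' \in O$, define $\tau^\ell(\pi, \pi')_i$ and $\tau^r(\pi,\pi')_i$ as the leftmost and rightmost paths from $p_i$ to $q_i$ contained in $\pi_i \cup \pi'_i$, exactly as in the proof of Lemma \ref{L:quadrangle}. Since the endpoints match, both $\tau^\ell$ and $\tau^r$ are genuine disjoint $k$-tuples from $\bp$ to $\bq$, and the same bookkeeping as in that proof gives
\[ |\pi|_f + |\pi'|_f = |\tau^\ell|_f + |\tau^r|_f. \]
Each summand on the right is at most $f[\bp\to\bq]$ while the left equals $2f[\bp\to\bq]$, so both $\tau^\ell$ and $\tau^r$ lie in $O$. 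In coordinates, the jump times of $\tau^\ell_i$ (respectively $\tau^r_i$) are the componentwise minima (respectively maxima) of the jump times of $\pi_i$ and $\pi'_i$.

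To finish, I would consider the functional $L(\pi) = \sum_{i=1}^{k} \sum_{j=m}^{n-1} t_j^{(i)}(\pi)$, where $t_j^{(i)}(\pi)$ is the $j$-th jump time of $\pi_i$; continuity in the Hausdorff topology follows because each path is determined by its jump times and these depend continuously on the path. By compactness, $L$ attains its minimum on $O$ at some $\rho$ and its maximum at some $\lambda$. If some $\pi \in O$ failed $\rho \le \pi$, then $\tau^\ell(\rho, \pi) \in O$ would strictly decrease $L$ by the componentwise-min description above, contradicting minimality; the argument for $\lambda$ is symmetric via $\tau^r$. The main thing to justify carefully is the length-preservation identity and the fact that $\tau^\ell, \tau^r$ are valid disjoint $k$-tuples in the cadlag setting, but both are handled by the same order-statistics argument that powers Lemma \ref{L:quadrangle}.
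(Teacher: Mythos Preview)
Your proof is correct and shares its central idea with the paper's: both exploit the lattice structure on optimizers furnished by the order-statistics construction $\tau^\ell, \tau^r$ from Lemma~\ref{L:quadrangle}, together with the identity $|\pi|_f + |\pi'|_f = |\tau^\ell|_f + |\tau^r|_f$ to show that the optimizer set is closed under meet and join. The difference is only in how the extremal element is extracted. The paper appeals to Zorn's lemma, using compactness and upper semicontinuity to show that chains of optimizers are bounded, and then uses the $\tau^\ell$ construction to argue that any two minimal elements coincide (hence the unique minimal element is a genuine minimum). You instead minimize the explicit continuous functional $L(\pi) = \sum_{i,j} t_j^{(i)}(\pi)$ over the compact set of optimizers and use $\tau^\ell$ to rule out any optimizer lying strictly to the left of the minimizer. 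Your route avoids the axiom of choice and is pleasantly concrete; the paper's route is marginally cleaner in that it does not require verifying continuity of an auxiliary functional in the Hausdorff topology (which, while true via the jump-time parametrization, is a small extra check).
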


\begin{proof}
	Consider the set $S$ of all optimizers from $\bp$ to $\bq$ with the partial order $\le$. Lemmas \ref{L:cty-lpp} and \ref{L:usc-lpp} imply that any totally ordered subset of $S$ has upper and lower bounds. Therefore by Zorn's lemma, $S$ contains at least one minimal element. Suppose that $\pi, \pi'$ are both minimal elements. Construct disjoint $k$-tuples $\tau^\ell, \tau^r$ from $\bp$ to $\bq$ as in the proof of Lemma \ref{L:quadrangle} so that $\tau^\ell \le \pi, \pi'$ and
	\begin{equation}
	\label{E:fpq}
	2 f [\bp \to \bq] = |\pi|_f + |\pi'|_f = |\tau^\ell|_f + |\tau^r|_f.
	\end{equation}
	Since $|\tau^\ell|_f, |\tau^r|_f  \le f [\bp \to \bq]$ by definition of $f [\bp \to \bq]$, \eqref{E:fpq} can only hold if both $\tau^r, \tau^\ell$ are optimizers from $\bp$ to $\bq$. Therefore $\tau^\ell \in S$ so by the minimality of $\pi, \pi'$ we have $\pi = \tau^\ell = \pi'$. Therefore $S$ contains a unique minimal element, the leftmost optimizer. The existence of the rightmost optimizer follows by a symmetric argument. 
\end{proof}
\section{The melon}

\label{S:the-melon}
Recall that the melon $Wf \in \sD^n$ of a function $f \in \sD^n$ is given by
\begin{equation}
\label{E:melon-def-body}
Wf_k(y) = f[(0, n)^k \to (y, 1)^k]- f[(0, n)^{k-1} \to (y, 1)^{k-1}]
\end{equation}
with the convention that  $f[p^0\to q^0]=0$.
We say that $f,g\in \sD^n$ are {\bf isometric} if
$$
f[(\bx, n) \to (\by, 1)] = g[(\bx, n) \to (\by, 1)]
$$
for all $k$-tuples $\bx, \by$. In other words, last passage values between the top and bottom boundaries in the environments defined by $f$ and $g$ are equal. Isometry is an equivalence relation on $\sD^n$, which we denote by $f \sim g$. 

The main goal of the section is to show that $f$ is isometric to $Wf$. To do this, we will show that $Wf$ agrees with iterated applications of $2$-line melon maps to $f$, alternately known as \textbf{Pitman transforms}. 
 
%
%
%
%

	\subsection{The Pitman transform}
	\label{S:Pitman}
From the definition of the melon map for $n=2$ we immediately get the following result.
	\begin{lemma}[The Pitman transform]
	\label{L:Melon-2}
	For $f \in \sD^2$, we have
		\begin{eqnarray}
		Wf_1(x) &=& f[(0,2) \to (x,1)]\\
		Wf_2(x) &=& f_1(x)+f_2(x) - Wf_1(x). 
		\end{eqnarray}
	\end{lemma}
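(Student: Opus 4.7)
The statement is just a specialization of the general definition \eqref{E:melon-def-body} to $n=2$, so my plan is to unwind the definition for $k=1$ and $k=2$ and compute the one nontrivial quantity that appears.

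For $k=1$ there is nothing to do: plugging $n=2$, $k=1$ into \eqref{E:melon-def-body} and using the convention $f[p^0\to q^0]=0$ gives
\[
Wf_1(x) = f[(0,2) \to (x,1)] - 0,
\]
which is the first asserted identity. For $k=2$, the definition yields
\[
Wf_2(x) = f[(0,2)^2 \to (x,1)^2] - f[(0,2) \to (x,1)] = f[(0,2)^2 \to (x,1)^2] - Wf_1(x),
\]
so the second identity reduces to showing
\[
f[(0,2)^2 \to (x,1)^2] = f_1(x) + f_2(x).
\]

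The plan for this last identity is a short geometric observation. Any path in $\sD^2$ from $(0,2)$ to $(x,1)$ is determined by a single jump time $t\in[0,x]$, consisting of the intervals $[0,t]\times\{2\}$ and $[t,x]\times\{1\}$. For two such paths to have disjoint interiors while both running between the same endpoints, the only option is that one has jump time $t=0$ and the other has jump time $t=x$; any other pair would share a nontrivial subinterval of either line $1$ or line $2$. Thus there is a unique disjoint $2$-tuple $\pi=(\pi_1,\pi_2)$ from $(0,2)^2$ to $(x,1)^2$, and
\[
\cup\pi = [0,x]\times\{1\} \,\cup\, [0,x]\times\{2\}.
\]
Since $f_1(0^-)=f_2(0^-)=0$, the length is $|\pi|_f = df(\cup\pi) = f_1(x)+f_2(x)$, as required.

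The only delicate point is the essential disjointness argument — in particular, the two chosen paths share the single points $(0,2)$ and $(x,1)$, so I should briefly note that this is allowed (their interiors remain disjoint, and any point masses of $df$ at these shared points are automatically counted once by $df(\cup\pi)$, not twice). Apart from this sanity check, the proof is just reading off the definition.
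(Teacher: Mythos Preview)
Your proof is correct and is exactly what the paper has in mind: the lemma is stated as an immediate consequence of the definition of the melon map for $n=2$, and you have simply spelled out that unwinding, including the observation that the unique essentially disjoint $2$-tuple covers $[0,x]\times\{1,2\}$. The small care you take about shared endpoints and the ordering of $\pi_1,\pi_2$ is appropriate but does not diverge from the paper's intended argument.
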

	
Our main goal in Section \ref{S:Pitman} is to prove that in the $2$-line case, $Wf \sim f$. The first step is
	Lemma \ref{L:wm-lem}, which shows that $W$ preserves single-point last passage values. This has the most technical proof in the paper, and consists of careful manipulations of the definitions.  The proof is complicated by the fact that the function $f$ is only cadlag, rather than continuous, see Lemma 4.2 in \cite*{DOV} for the easier continuous case. We defer to the proof to the Appendix \ref{S:appendix}.
		\begin{lemma}
		\label{L:wm-lem}
		The map $W$ maps $\sD^2$ to itself. Moreover, for $f \in \sD^2$ and $0 \le x \le y$, we have that
		\begin{equation}
		    	f\big[(x, 2) \to (y, 1) \big] = Wf\big[(x, 2) \to (y, 1) \big]
		\end{equation}
	\end{lemma}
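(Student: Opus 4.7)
My plan is to exploit the explicit Pitman formula from Lemma \ref{L:Melon-2}. Setting $h(s) = f_2(s) - f_1(s^-)$ and $\phi(t) = \sup_{s \in [0,t]} h(s)$, the formula reads $Wf_1 = f_1 + \phi$, $Wf_2 = f_2 - \phi$, with $\phi$ nondecreasing and cadlag.

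For the first assertion, $Wf \in \sD^2$, the only nontrivial point is that $Wf_2$ has nonnegative jumps. A jump of $\phi$ at $t$ forces $\phi(t) = h(t)$ and $\phi(t^-) \ge h(t^-)$, so the jump of $\phi$ at $t$ is at most $h(t) - h(t^-) = f_2(t) - f_2(t^-)$, which is exactly the simultaneous jump of $f_2$. Hence $Wf_2$ has no negative jumps.

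For the isometry, I parametrize a path from $(x,2)$ to $(y,1)$ by its jump time $t \in [x,y]$ and rewrite
\[
f[(x,2) \to (y,1)] = f_1(y) - f_2(x^-) + \sup_{t \in [x,y]} h(t).
\]
Substituting the Pitman formula transforms the analogous expression for $Wf$ into
\[
Wf[(x,2)\to(y,1)] = f_1(y) - f_2(x^-) + \phi(y) + \phi(x^-) + \sup_{t \in [x,y]} g(t),
\]
with $g(t) := h(t) - \phi(t) - \phi(t^-)$. The whole task thus reduces to
\[
\sup_{t\in[x,y]} g(t) = S[x,y] - \phi(y) - \phi(x^-), \qquad S[x,y] := \sup_{t\in[x,y]} h(t).
\]
Using $\phi(t) = \max(\phi(t^-), h(t))$ I would split into cases. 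If $S[x,y] < \phi(x^-)$, then $\phi$ is constant on $[x,y]$ and $g(t) = h(t) - 2\phi(x^-)$, so both sides collapse to $S[x,y] - 2\phi(x^-)$. Otherwise $\phi(y) = S[x,y]$, and I would choose $t^* = \inf\{t \in [x,y] : h(t) \ge \phi(x^-)\}$; minimality of $t^*$ forces $\phi((t^*)^-) = \phi(x^-)$, while $h(t^*) \ge \phi(x^-)$ forces $\phi(t^*) = h(t^*)$, so $g(t^*) = -\phi(x^-)$, giving the required identity.

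The main obstacle is verifying that the infimum $t^*$ is actually attained. The function $h$ is neither cadlag nor caglad in general: $f_2$ contributes upward left-jumps to $h$ and $f_1$ contributes downward right-jumps. However, both $h(t^-) \le h(t)$ and $h(t^+) \le h(t)$, so $h$ is upper semicontinuous on the compact interval $[x,y]$. Consequently the level set $\{t : h(t) \ge \phi(x^-)\}$ is closed and (by Case A) nonempty, so it contains its infimum. This semicontinuity subtlety, absent in the purely continuous case of Lemma 4.2 in \cite*{DOV}, is likely the technical point responsible for the proof being deferred to the appendix.
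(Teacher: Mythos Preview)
Your proof is correct and follows essentially the same route as the paper's. Both arguments use the Pitman formula $Wf_1=f_1+\phi$, $Wf_2=f_2-\phi$, bound the jumps of $\phi$ by those of $f_2$ to get $Wf\in\sD^2$, reduce the isometry to the identity $\sup_{t\in[x,y]}[h(t)-\phi(t)-\phi(t^-)] = S[x,y]-\phi(y)-\phi(x^-)$, and split into the same two cases. The only cosmetic differences are: the paper places the case boundary at $\phi(x^-)=\phi(y)$ (equivalently $S[x,y]\le\phi(x^-)$) rather than at strict inequality, and in the non-constant case the paper witnesses the supremum at $z_0=\sup\{z:\phi(z^-)=\phi(x^-)\}$ using left/right continuity of $\phi$, whereas you use $t^*=\inf\{t:h(t)\ge\phi(x^-)\}$ via upper semicontinuity of $h$; in general $t^*\le z_0$ but both give $g=-\phi(x^-)$. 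One small omission: in Case B you exhibit $g(t^*)=-\phi(x^-)$ but do not explicitly state the matching upper bound $g(t)=h(t)-\phi(t)-\phi(t^-)\le -\phi(t^-)\le -\phi(x^-)$; the paper does spell this out, and you should too.
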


	Lemma \ref{L:wm-lem} and the definition of $W$ allow us to identify both the image of the two-line map $W$, and the fact that it is idempotent.
	
	\begin{definition}
	We say that two functions $f_1, f_2\in \sD$ are \textbf{Pitman ordered},  if 
	$$
	f_2(t)\le f_1(t^-) \quad \text {for all }t.
	$$
	When this is the case, we will write $f_2 \preceq f_1$.
	\end{definition}
	
	\begin{remark}
	\label{R:Pitman-ordered}
	    With this definition the set $\sD_\uparrow^n$ introduced in Theorem \ref{T:W-facts} (iii) can be written simply as $\sD_\uparrow^n = \{f \in \sD^n : f_n \preceq \ldots \preceq f_1 \} \subset \sD^n$. Since the lines are ordered in this way, it is clear that for $f \in \sD^n_\uparrow$ there is a disjoint optimizer from $(0, n)^k \to (y, 1)^k$ following the leftmost possible paths, i.e.$
	    f_1(x)+\ldots+f_k(x) = f[(0,n)^k\to(x,1)^k]$. In short, 
	    \begin{equation}
	    \label{E:DuW}
	        Wf=f
	    \end{equation}
	    for $f \in \sD^n_\uparrow$.
	\end{remark}
	
	\begin{lemma}
		\label{L:idempotency} Let $f=(f_1,f_2) \in \sD^2$. The following are equivalent:
		\begin{enumerate}[label=(\roman*), nosep]
			\item $(Wf)_1=f_1$,
			\item $(Wf)_2=f_2$,
			\item $Wf=f$,
			\item $f_2\preceq f_1$.
		\end{enumerate}
		Moreover, the Pitman transform is idempotent, $W^2 = W$ and the the image of Pitman transform is  precisely the set of all Pitman ordered functions: $$W(\sD^2) = \sD^2_\uparrow = \{f \in \sD^2 : f_2 \preceq f_1 \}.$$
	\end{lemma}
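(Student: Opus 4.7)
My plan is to start from the explicit formulas in Lemma~\ref{L:Melon-2}. The definition $Wf_2 = f_1 + f_2 - Wf_1$ gives the identity $Wf_1 + Wf_2 = f_1 + f_2$, which immediately makes (i), (ii), (iii) pairwise equivalent, reducing the four-way equivalence to proving (i) $\iff$ (iv).

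Next I would parametrize any path from $(0, 2)$ to $(x, 1)$ by its unique jump time $t \in [0, x]$ and obtain
$$
Wf_1(x) = f_1(x) + g(x), \qquad g(x) := \sup_{t \in [0, x]} \bigl(f_2(t) - f_1(t^-)\bigr),
$$
so that $g$ is non-decreasing with $g(0) = f_2(0) - f_1(0^-) = f_2(0) \ge 0$. If $f_2 \preceq f_1$, every term in the supremum is $\le 0$, forcing $g \equiv 0$ and hence $Wf_1 = f_1$; this is (iv) $\Rightarrow$ (i). Conversely, if $Wf_1 = f_1$ then $g \equiv 0$, so $f_2(t) \le f_1(t^-)$ for every $t$, which is (iv). I would also record the companion identity $Wf_2(x) = f_2(x) - g(x)$ coming from $Wf_1 + Wf_2 = f_1 + f_2$.

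To prove $W^2 = W$ it then suffices to verify that $Wf$ is always Pitman ordered, since applying (iv) $\Rightarrow$ (iii) to $Wf$ finishes. Using $g(t) \ge f_2(t) - f_1(t^-)$ (immediate from the supremum) together with the cadlag identity $Wf_1(t^-) = f_1(t^-) + g(t^-)$, the required bound is
$$
Wf_2(t) = f_2(t) - g(t) \;\le\; f_1(t^-) \;\le\; f_1(t^-) + g(t^-) = Wf_1(t^-),
$$
the last inequality using $g(t^-) \ge g(0) \ge 0$. This gives $W(\sD^2) \subset \sD^2_\uparrow$; the reverse inclusion is exactly equation~\eqref{E:DuW} in Remark~\ref{R:Pitman-ordered}. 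The only real subtlety lies in commuting the left-limit through $g$, but this is routine since $g$ is monotone, so I expect no serious analytic obstacle---the heavy lifting has already been done in Lemma~\ref{L:wm-lem}.
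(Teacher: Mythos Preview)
Your proof of the four-way equivalence (i)--(iv) is correct and essentially identical to the paper's: both use $Wf_1+Wf_2=f_1+f_2$ to collapse (i),(ii),(iii), and then the explicit formula $Wf_1(x)=f_1(x)+\sup_{t\in[0,x]}(f_2(t)-f_1(t^-))$ to link (i) and (iv).

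For idempotency and the image, however, you take a genuinely different route. The paper invokes Lemma~\ref{L:wm-lem} (preservation of last passage values under $W$) to get $W(Wf)_1=Wf_1$ and hence $W^2=W$. You instead prove directly that $Wf$ is Pitman ordered via the elementary chain
\[
Wf_2(t)=f_2(t)-g(t)\le f_1(t^-)\le f_1(t^-)+g(t^-)=Wf_1(t^-),
\]
and then apply (iv)$\Rightarrow$(iii) to $Wf$. Your approach is more self-contained: it avoids the technical Lemma~\ref{L:wm-lem} entirely and works purely from the explicit two-line formulas. This makes your closing remark that ``the heavy lifting has already been done in Lemma~\ref{L:wm-lem}'' misleading---your argument does not use that lemma at all, which is precisely its advantage. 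One small cleanup: for the reverse inclusion $\sD^2_\uparrow\subset W(\sD^2)$ you could simply cite the equivalence (iv)$\Rightarrow$(iii) you just proved (if $f\in\sD^2_\uparrow$ then $Wf=f$), rather than reaching for Remark~\ref{R:Pitman-ordered}.
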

	
	\begin{proof}
	By Lemma \ref{L:Melon-2}, $Wf_1 + Wf_2 = f_1 + f_2$. This gives the equivalence of (i), (ii) and (iii). For (iv), observe that $f_2\preceq f_1$ if and only if 
	$$
	f[(0, 2) \to (x, 1)] = f_1(x) +	\sup_{0 \le y \le x} f_2(y) - f_1(y^-) = f_1(x)$$ for all $x$, or in other words $Wf_1=f_1$, giving the equivalnce of (i) and (iv). 
	Lemma \ref{L:wm-lem} then implies the claims about idempotency and image.
	\end{proof}

	Next, we extend Lemma \ref{L:wm-lem} to deal with disjoint $k$-tuples. For the proof, we need the notion of a path from $p^+$ to $q$. This is defined the same way as the path $\pi$ from $p=(x,n)$ to $q=(y,m)$ as in \eqref{E:intervals}, with the vertical line 
	$\{x\}\times\{m,\ldots,n\}$ removed. The last passage value
	$
	f[p^+ \to q] 
	$
	is the supremum of $|\pi|_f$ over all paths $\pi$ from $p^+$ to $q$.
	Analogously, we define paths from $p$ to $q^-$ and $p^+$ to $q^-$. paths, and last passage values $f[p\to q^-]$ and $f[p^+ \to q^-]$. We can think of the last passage value $f[p \to q]$ as $f[p^- \to q^+]$.
	
	\begin{corollary}\label{C:endpoints} Let $p=(x,2)$, $q=(y,1)$ with $0\le x<y$. Then 
		$$
		Wf[p^+\to q]=f[p^+\to q], \quad Wf[p\to q^-]=f[p\to q^-], \quad Wf[p^+\to q^-]=f[p^+\to q^-].
		$$
	\end{corollary}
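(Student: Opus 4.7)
The plan is to reduce all three identities to Lemma \ref{L:wm-lem} by an approximation argument, writing each endpoint-excluded last passage value as a limit of ordinary two-point last passage values with slightly perturbed endpoints.

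The first step is to establish the following three limit formulas, valid for any $g \in \sD^2$ and $0 \le x < y$:
\begin{align*}
g[p^+ \to q] &= \lim_{\epsilon \to 0^+} g[(x+\epsilon, 2) \to (y, 1)], \\
g[p \to q^-] &= \lim_{\epsilon \to 0^+} g[(x, 2) \to (y-\epsilon, 1)], \\
g[p^+ \to q^-] &= \lim_{\epsilon \to 0^+} g[(x+\epsilon, 2) \to (y-\epsilon, 1)].
\end{align*}
To verify the first formula, I would unfold the definition to write
$$g[(x+\epsilon, 2) \to (y, 1)] = \sup_{t \in [x+\epsilon, y]} \big(g_2(t) - g_2((x+\epsilon)^-)\big) + \big(g_1(y) - g_1(t^-)\big),$$
and note that $g_2((x+\epsilon)^-) \to g_2(x)$ as $\epsilon \to 0^+$ by the right-continuity of $g_2$ at $x$. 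The $\liminf$ bound follows by fixing a near-optimal jump time $t \in (x, y]$ for $g[p^+ \to q]$ and passing to the limit; the $\limsup$ bound uses optimizers $t(\epsilon) \in [x+\epsilon, y]$, each of which corresponds to a path from $p^+$ to $q$ whose length differs from $g[(x+\epsilon, 2) \to (y, 1)]$ by the vanishing error $g_2(x) - g_2((x+\epsilon)^-)$. The other two formulas follow by analogous arguments at the right endpoint, using $g_1(y-\epsilon) \to g_1(y^-)$.

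With the limit formulas in hand, the corollary is immediate. Since $Wf \in \sD^2$ by Lemma \ref{L:wm-lem}, I can apply each formula with both $g = f$ and $g = Wf$, and Lemma \ref{L:wm-lem} applied at each pair of perturbed endpoints ensures the two sequences agree term by term. For instance,
$$f[p^+ \to q] = \lim_{\epsilon \to 0^+} f[(x+\epsilon, 2) \to (y, 1)] = \lim_{\epsilon \to 0^+} Wf[(x+\epsilon, 2) \to (y, 1)] = Wf[p^+ \to q],$$
and the remaining two equalities are obtained identically.

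The main obstacle is the careful bookkeeping required in the first step: because the $g_i$ are only cadlag, one must track which endpoint value ($g_i$ versus $g_i^-$) appears in each expression, and handle separately the degenerate case in which the optimal jump time $t$ coincides with an excluded endpoint. Once the limit formulas are established, the rest is purely formal.
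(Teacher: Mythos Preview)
Your proposal is correct and follows exactly the same approach as the paper: both arguments express each endpoint-excluded last passage value as a limit of ordinary last passage values with perturbed endpoints (using that $f_i$ and $Wf_i$ are cadlag), and then invoke Lemma~\ref{L:wm-lem} termwise. The paper's proof is simply a terser version of yours, stating the limit formula without the detailed liminf/limsup justification you outline.
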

	\begin{proof}
		Since the functions $f_i$ and $Wf_i$ are cadlag, we have
		$$f[p^+ \to q]=\lim_{z\downarrow x}f[(z,2)\to q], \qquad \mathand \qquad Wf[p^+ \to q]=\lim_{z\downarrow x}Wf[(z,2)\to q].
		$$
		Lemma \ref{L:wm-lem} then implies the first claim. The others are proven analogously. 
	\end{proof}
	
	\begin{lemma}[Isometry of the Pitman transform]
		\label{L:n2-general}
		For $f \in \sD^2,$ $Wf \sim f$. That is, for every pair of $k$-tuples $(\mathbf{p}, \mathbf{q}) =(\bx, 2; \by, 1)$, we have 
		$$
		f\big[\mathbf{p} \to \mathbf{q} \big] = Wf[\mathbf{p} \to \mathbf{q}].
		$$
	\end{lemma}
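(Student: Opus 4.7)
My plan is to reduce the multi-point identity to the single-point case (Lemma~\ref{L:wm-lem}), leveraging the definition of $W$ and the Pitman-ordering of $Wf$.

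First, at the canonical boundary $(\bx,\by) = (\mathbf 0, y\mathbf 1)$ the identity is immediate: by the definition \eqref{E:melon-def-body}, $f[(0,2)^k\to(y,1)^k] = \sum_i Wf_i(y)$, and since $Wf \in \sD^2_\uparrow$ by Lemma~\ref{L:idempotency}, Remark~\ref{R:Pitman-ordered} gives the same value for $Wf[(0,2)^k\to(y,1)^k]$.

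For general $(\bx,\by)$, I would parameterize a disjoint $k$-tuple by its jump times $t_1 \le \cdots \le t_k$, subject to $x_i \le t_i \le y_i$ together with the essential-disjointness constraints $t_i \le x_{i+1}$ and $y_i \le t_{i+1}$. In the generic strict case, the length decomposes additively as
\[
|\pi|_f = \sum_{i=1}^k \big[f_1(y_i) - f_2(x_i^-) + f_2(t_i) - f_1(t_i^-)\big],
\]
and the optimization over $\mathbf t$ decouples per index into $k$ single-interval sub-problems of the form $\sup_{t\in[a_i,b_i]}[f_2(t) - f_1(t^-)]$, where $[a_i,b_i] := [\max(x_i,y_{i-1}), \min(y_i,x_{i+1})]$. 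Applying Lemma~\ref{L:wm-lem} to each such sub-interval connects the $f$-sum to the analogous $Wf$-sum, reducing the identity to a telescoping of boundary values of $h(y) := \sup_{t \in [0,y]}[f_2(t) - f_1(t^-)]$.

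The main obstacle is that this telescoping is not automatic: when adjacent constraints interact ($x_{i+1} < y_i$) and $h$ has jumps at the touching points, the local contributions $\Delta h(x_{i+1}) - \Delta h(y_i)$ do not cancel in isolation. The key insight rescuing the argument is that the naive additive formula for $|\pi|_f$ overcounts $df$ at shared endpoints $t_i = x_{i+1}$ on line $2$ and $y_i = t_{i+1}$ on line $1$, and these jump corrections precisely match the $h$-telescoping discrepancies. Making this matching rigorous requires a careful case analysis, which can be streamlined by restricting the supremum to strictly-configured $\mathbf t$ and invoking the upper semicontinuity of Lemma~\ref{L:usc-lpp}, together with the one-sided limit conventions of Corollary~\ref{C:endpoints} to handle cadlag jumps at boundary points.
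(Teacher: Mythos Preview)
Your plan stalls exactly where you say it does, and the gap is real rather than cosmetic. In the overlapping case $x_{i+1} < y_i$ you have $b_i = x_{i+1}$ and $a_{i+1} = y_i$, and the telescoping defect between the naive $f$- and $Wf$-sums is $\Delta h(y_i) - \Delta h(x_{i+1})$, whereas the overcounting corrections involve $\Delta f_2(x_{i+1})$ and $\Delta f_1(y_i)$ --- and those only when the optimal $t_i$, $t_{i+1}$ actually land at the shared endpoints. These are different objects, linked through $\Delta h(t) = [s(t,t) - h(t^-)]^+$ in a way that depends on where the running supremum is attained. Worse, once corrections enter, the per-index sups $s_f(a_i,b_i)$ are no longer the right summands, so the decoupling you rely on breaks. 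The cancellation you describe may well hold, but verifying it is the heart of the argument, not a bookkeeping footnote, and you have not carried it out.

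The paper's proof sidesteps all of this with a cleaner decomposition. Set $N(z) = \#\{i : z \in [x_i,y_i]\}$. For any disjoint $k$-tuple $\pi$ from $\bp$ to $\bq$, the set $Z := N^{-1}(\{2,\dots\})$ is forced to satisfy $Z\times\{1,2\}\subset\cup\pi$, while on each connected component $I_j$ of $N^{-1}(1)$ the intersection of $\cup\pi$ with $I_j \times \{1,2\}$ is a single path between the endpoints $a_j^*, b_j^*$ of $I_j$ (the $\pm$ decorations recording whether each end of $I_j$ is open or closed). Hence
\[
f[\bp \to \bq] = df(Z \times \{1,2\}) + \sum_j f[a_j^* \to b_j^*],
\]
and both pieces are invariant under $W$: the first because $Wf_1+Wf_2=f_1+f_2$, the second by Corollary~\ref{C:endpoints}. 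No telescoping, no correction terms. Your path-by-path decomposition is essentially groping toward this picture; the $N$-based one arrives there directly.
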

	
	\begin{proof}
		Define  $N:\R \to \{0, 1, \dots\}$ by $N(z) = \#\{i: z \in [x_i, y_i]\}$.  Disjointness of a $k$-tuple $\pi$ from $\mathbf{p}$ to $\mathbf{q}$ implies the following. 
		\begin{itemize}[nosep]
			\item Let $Z=N^{-1}( \{2, \dots\})$, that is the set of points $z$ that are contained in at least two intervals $[x_i,y_i]$. Then  $Z\times\{1,2\}\subset \cup \pi$.
			\item Let $I_1, \dots, I_\ell$ be the connected components of $N^{-1}(1)$, let $\hat a_j \le \hat b_j$ be the endpoints of $I_j$ and set $a_j = (\hat a_j, 2), b_j = (\hat b_j, 1)$. Then $I_j \X \{1, 2\} \cap (\cup \pi)$ is a path from $a_j^*$ to $b_j^*$ for all $j$. Here $* \in \{+, -\}$, depending on whether the corresponding end of $I_j$ is open or closed.
			\item $(\cup \pi)\cap (N^{-1}(\{0\}) \X \{1, 2\}) = \emptyset.$
		\end{itemize}
		Therefore letting $\rho_i$ be the path $I_j \X \{1, 2\} \cap (\cup \pi)$ from $a_j^*$ to $b_j^*$, we can write
		\begin{equation}
		\label{E:union-decomp}
		\cup \pi = Z\times\{1,2\} \,\cup\, \bigcup_{i=1}^\ell \rho_i.
		\end{equation}
		Moreover, given arbitary $a_i^* \to b_i^*$ paths $\rho_i$, there is a $\mathbf{p}\to\mathbf{q}$ path $\pi$ so that \eqref{E:union-decomp} holds. Therefore
		$$
		f[\mathbf{p} \to \mathbf{q}] = df(Z\times\{1,2\}) + \sum_{i=1}^\ell f[a_i^* \to b_i^*].
		$$
		By the preservation of the sum $f_1 + f_2$, the first term doesn't change when we apply $W$. By Corollary \ref{C:endpoints}, nor does the remaining sum.
	\end{proof}

	\subsection{Repeated Pitman transforms, cars, and the melon}
	\label{SS:repeated-pitman}
	Next, we build the $n$-line melon map. We first extend the Pitman transform to functions $f \in \sD^{n}$ by applying it to two lines at a time. For $i \in \{1, \dots, n-1\}$, define $\sig_i:\sD^n \to \sD^n$ by
	$$
	\sig_i f = (f_1, \dots, f_{i-1}, W(f_i, f_{i, i+1})_1, W(f_i, f_{i, i+1})_2, f_{i+2} \dots, f_n).
	$$
	
	\begin{prop}[Isometry property of $\sigma_i$]
		\label{P:wm-equivalent}
		Let $f \in \sD^n$, $n, k \in \N$, and let $(\bp, \bq) = (\bx, n; \by, 1)$. Then for $m \in \{1, \dots, n-1\}$, we have
		\begin{equation}
		\label{E:wm-equiv-lpp}
		f[\mathbf{p} \to \mathbf{q}] = \sig_m f[\mathbf{p} \to \mathbf{q}].
		\end{equation}
	\end{prop}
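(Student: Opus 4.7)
The plan is to reduce to the two-line isometry (Lemma \ref{L:n2-general}) by using the metric composition law (Lemma \ref{L:split-path}) to isolate the contribution of lines $m$ and $m+1$ from $f[\bp \to \bq]$. Since $\sig_m$ modifies $f$ only on these two lines, once this contribution is factored out the remaining pieces will trivially coincide for $f$ and $\sig_m f$, while the middle piece agrees by the two-line result.

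Concretely, assume first that $2 \le m \le n-2$. I would apply Lemma \ref{L:split-path} first at $i = m+2$, and then apply it again to the resulting lower term at $i = m$, to obtain
$$f[\bp \to \bq] = \max_{\bw, \bz} \bigl(f[(\bx, n) \to (\bw, m+2)] + f[(\bw, m+1) \to (\bz, m)] + f[(\bz, m-1) \to (\by, 1)]\bigr).$$
The three summands depend on disjoint blocks of lines: the first only on lines $m+2, \dots, n$, the middle only on lines $m, m+1$, and the third only on lines $1, \dots, m-1$. The outer two summands are therefore identical after replacing $f$ by $\sig_m f$. For the middle summand, relabelling line $m+1$ as $2$ and line $m$ as $1$ turns it into a two-line last passage value on $(f_m, f_{m+1})$, whose Pitman transform is exactly $((\sig_m f)_m, (\sig_m f)_{m+1})$; Lemma \ref{L:n2-general} then yields
$$f[(\bw, m+1) \to (\bz, m)] = \sig_m f[(\bw, m+1) \to (\bz, m)]$$
for every $\bw, \bz$. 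Taking the maximum gives \eqref{E:wm-equiv-lpp}.

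The boundary cases require only minor adjustments: for $m = 1$ only the split at $i = 3$ is used, as the bottom summand is absent and the middle summand already lives entirely on lines $1, 2$; for $m = n-1$ only the split at $i = n-1$ is used, as the top summand is absent and the resulting upper term is already a two-line last passage value on lines $n-1, n$; and when $n = 2$ the proposition is literally the content of Lemma \ref{L:n2-general}. I do not anticipate a serious obstacle beyond careful bookkeeping of indices and the three boundary regimes — the essential input, the two-line isometry together with the metric composition law, is already available.
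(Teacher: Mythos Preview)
Your proposal is correct and follows essentially the same argument as the paper: apply the metric composition law twice to isolate the two lines $m,m+1$, invoke Lemma \ref{L:n2-general} on the middle piece, and handle the boundary cases $m=1$, $m=n-1$, $n=2$ by dropping the appropriate outer term. The only differences from the paper's version are cosmetic (the names of the intermediate variables $\bw,\bz$ are swapped).
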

	
	\begin{proof}
		We first assume $m \in \{2, \dots, n-2\}$.
		By the metric composition law, Lemma \ref{L:split-path}, applied twice, we can write
		\begin{equation}
		\label{E:split-at-m}
		f[\mathbf{p} \to \mathbf{q}] =  \sup_{\bz, \bw \in \R^k} \Big(f[\mathbf{p} \to (\bz, m+2)] + f[(\bz, m+1) \to (\bw, m)] +  f[(\bw, m-1) \to \mathbf{q}] \Big).
		\end{equation}
		For a fixed pair $(\bz, \bw)$, when we apply $\sig_m$ to $f$, the first and third terms under the supremum in \eqref{E:split-at-m} do not change since the relevant components $f_i$ do not change. The middle term is preserved under the transformation $\sig_m$ by Lemma \ref{L:n2-general}. Hence the right hand side of \eqref{E:split-at-m} is also preserved under the map $f \mapsto \sig_m f$. The cases $m =1, n-1$ are similar with one of the terms in \eqref{E:split-at-m} removed.
	\end{proof}

For $m \le n$, define $\tau_m:\sD^n \to \sD^n$ by
$$
\tau_m=\sigma_m\sigma_{m+1}\cdots \sigma_{n-1}.
$$
We will build the $n$-line Pitman transform from composing the maps $\tau_i$. 

	
\begin{remark}[Cars]
	\label{R:cars}
The maps $\tau_m$   can be used to give a connection between particle systems and last passage percolation as follows. The functions $t \mapsto \tau_kf_k(t)$, with $k=1,\ldots, n$ can be thought of as  deterministic versions of the totally asymmetric simple exclusion process, tasep. 
		
Informally, think of $n$ cars moving on a single-lane highway in the same, typically negative direction. The cars cannot pass each other. The derivative $f'_k(t)$ is the desired velocity of car $k$ at time $t$.  
		
However, cars cannot always move at their desired velocity. Cars ignore cars behind them, but are often forced to avoid cars ahead of them, in which case they slow down just enough to avoid collision. The key feature of these models is that the interaction, rather than the direction of movement is totally asymmetric. Slowing down could mean being  forced to back up if the car ahead does. The function $\tau_kf_k(t)$ is the position of the $k$th car at time $t$.
		
A different model for the movement of the cars is that ignore the cars ahead of them and are forced to speed up to avoid collisions from cars behind them. For this model, the location of  car $k$ is then given by  $\sigma_{k-1}\cdots \sigma_{1}f_k(t)$. 
This version is push-asep and first passage percolation. 
		
Stochastic models that fit in this setting include tasep, discrete-time tasep,  push-asep (which also has totally asymmetric interactions), Brownian tasep and the Hammersley process. 
	\end{remark}
	
	In light of the remark, the following lemma states a deterministic equivalence between exclusion processes and last passage percolation. 
	\begin{lemma}
		\label{L:top-LP-preserve}
		Let $f \in \sD^n$ and $1 \le m \le n \in \N$. Then for $y \ge 0$, we have
		\begin{equation}\label{E:TASEP}
		f[(0,n)\to (y,m)]=\tau_mf_m(y).    \end{equation}
	\end{lemma}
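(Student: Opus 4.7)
\textbf{Proof proposal for Lemma \ref{L:top-LP-preserve}.}

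The plan is to prove \eqref{E:TASEP} by downward induction on $m$, starting from $m=n$ and decreasing to $m=1$. The inductive step will combine the two-line Pitman formula (Lemma \ref{L:Melon-2}) with the metric composition law (Lemma \ref{L:split-path}), using the observation that $\tau_{m+1}$ acts trivially on line $m$.

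The base case $m=n$ is immediate: $\tau_n$ is the empty product, so $\tau_n f_n(y) = f_n(y)$, and the unique path from $(0,n)$ to $(y,n)$ is the horizontal interval $[0,y]\times\{n\}$, whose length is $f_n(y) - f_n(0^-) = f_n(y)$ by the boundary convention.

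For the inductive step, suppose the identity holds at level $m+1$. Set $g := \tau_{m+1}f = \sigma_{m+1}\cdots \sigma_{n-1}f$. Because each $\sigma_i$ only modifies lines $i$ and $i+1$, none of the maps in the composition defining $\tau_{m+1}$ touches line $m$, so $g_m = f_m$. Since $\tau_m f = \sigma_m g$, applying Lemma \ref{L:Melon-2} to the pair $(g_m, g_{m+1})$ gives
\[
\tau_m f_m(y) \;=\; (\sigma_m g)_m(y) \;=\; W(g_m, g_{m+1})_1(y) \;=\; \sup_{0\le t\le y} \bigl[ g_{m+1}(t) + g_m(y) - g_m(t^-) \bigr],
\]
where I used $g_{m+1}(0^-)=0$. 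By the inductive hypothesis applied to $g_{m+1} = \tau_{m+1}f_{m+1}$, we have $g_{m+1}(t) = f[(0,n)\to(t,m+1)]$, and since $g_m = f_m$, the right-hand side becomes
\[
\sup_{0\le t \le y} \Bigl[ f[(0,n)\to(t,m+1)] + f_m(y) - f_m(t^-) \Bigr].
\]

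To finish, apply Lemma \ref{L:split-path} (the $k=1$ case) with splitting index $i=m+1$:
\[
f[(0,n)\to(y,m)] \;=\; \max_{z\in[0,\infty)}\Bigl( f[(0,n)\to(z,m+1)] + f[(z,m)\to(y,m)] \Bigr).
\]
The second summand equals $f_m(y)-f_m(z^-)$ when $z\le y$ and is $-\infty$ otherwise, so the maximum is effectively taken over $z\in[0,y]$. This expression matches the supremum above, completing the inductive step.

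The main potential obstacle is the usual cadlag bookkeeping at the endpoints: one must verify that taking left limits $f_m(t^-)$ in the Pitman formula matches the boundary convention in the metric composition law. This is handled by the measure formulation of $df$ introduced in Section \ref{S:lpp-cadlag}, which encodes the endpoint convention uniformly. Beyond that, the argument is a clean reduction of multi-line last passage to two-line Pitman transforms.
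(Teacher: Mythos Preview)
Your proof is correct and follows essentially the same approach as the paper: downward induction on $m$, with the inductive step combining the metric composition law (Lemma \ref{L:split-path}) at level $m+1$ with the inductive hypothesis and the definition of $\sigma_m$ via the two-line Pitman formula. The paper's version is a bit terser (it writes the last step directly as $\sigma_m\tau_{m+1}f_m(y)$ rather than unpacking the Pitman formula), but the argument is the same.
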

	
	\begin{proof}
		We show this by induction on $n-m$. The $n=m$ case is true by definition of $\tau_n = Id$. For $m < n$, we have by the metric composition law, Lemma \ref{L:split-path}, 
		\begin{align*}
		f[(0,n)\to(y,m)]&=\sup_{z\le y} f[(0,n)\to(z,m+1)]-f_m(z^-)+f_m(y)
		\\
		&=\sup_{z\le y}\tau_{m+1} f_{m+1}(z)  -f_m(z^-)+f_m(y)\\
		&= \sig_m \tau_{m+1} f_m(y),
		\end{align*}
		where the second equality follows from the inductive hypothesis, and the third equality is simply the definition of $\sig_m$.
		By the definition of $\tau_m$ this equals the right  hand side of \eqref{E:TASEP}.
	\end{proof}
	
	\begin{definition}
	\label{D:iterated-melon-def}
	We will build the $n$-line map $\omega:\sD^n \to \sD^n$ from the functions $\tau_m$ as follows. Define
	\begin{equation}\label{eq:w}
	\omega=\tau_{n-1}\tau_{n-2}\cdots \tau_1 = \sig_{n-1} \sig_{n-2} \sig_{n-1} \cdots \sig_1 \sig_2 \cdots \sig_{n-1}.
	\end{equation}
	Informally speaking, if we think of $\sig_i$ as ``sorting'' the $i$-th and $i+1$-st functions $f_i$ and $f_{i+1}$, then $\omega$ is precisely the ``bubble sort'' algorithm to sort the entire ensemble $f$.
	\end{definition}
	
	In the following proposition, we show that $\omega=W$, the melon map from \eqref{E:melon-def-body}.
	
	\begin{prop}
		\label{P:W=w}
		Let $\omega f$ be the melon of a function $f \in \sD^n$. Then:
		\begin{enumerate}[label=(\roman*)]
			\item (Isometric equivalence) $\omega f \sim f$.
			\item (Idempotence) $\omega^2 = \omega$.
			\item (Image) We have $
			\omega (\sD^n) = \sD^n_\uparrow = \{f \in \sD^n : f_n \preceq \dots \preceq f_1\}.$
			\item $\omega =W$ from  \eqref{E:melon-def-body}.
		\end{enumerate}
	\end{prop}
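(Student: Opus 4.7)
My plan is to prove the four items in the order (i), (iii), (iv), (ii), with (iii) handled by induction on $n$. Part (i) is the foundation for everything else: Proposition \ref{P:wm-equivalent} shows that each $\sig_m$ preserves top-to-bottom multi-point last passage values, and since $\omega$ is a composition of $\sig_m$'s by \eqref{eq:w}, iterating this immediately gives $\omega f \sim f$.

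For part (iii), I induct on $n$. The base case $n=2$ is Lemma \ref{L:idempotency}. For the inductive step, set $g = \tau_1 f$. The operators $\tau_2,\ldots,\tau_{n-1}$ all leave line $1$ untouched and, after shifting labels by one, their composition acts on lines $2,\ldots,n$ exactly as the $(n{-}1)$-line melon map does. So $\omega f = (g_1, h_2, \ldots, h_n)$ where $(h_2,\ldots,h_n)$ is the $(n{-}1)$-line melon of $(g_2,\ldots,g_n)$. By induction $h_n \preceq \ldots \preceq h_2$, and it remains only to show $h_2 \preceq g_1$. Applying Lemma \ref{L:top-LP-preserve} in the $n$-line and $(n{-}1)$-line settings respectively gives $g_1(y) = g[(0,n)\to(y,1)]$ and $h_2(y) = g[(0,n)\to(y,2)]$. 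Now any path $\pi$ from $(0,n)$ to $(y,2)$ can be extended to a path from $(0,n)$ to $(y,1)$ by adjoining the single-point interval $\{y\}\X\{1\}$, which increases $|\pi|_g$ by exactly $g_1(y) - g_1(y^-)$; taking suprema gives $g_1(y) \ge h_2(y) + g_1(y) - g_1(y^-)$, hence $h_2(y) \le g_1(y^-)$, as needed. This proves $\omega(\sD^n) \subset \sD^n_\uparrow$. The reverse inclusion is elementary: if $f \in \sD^n_\uparrow$, each consecutive pair $(f_i, f_{i+1})$ is Pitman ordered, so $\sig_i f = f$ by Lemma \ref{L:idempotency}, and therefore $\omega f = f$.

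Given (i) and (iii), part (iv) is a short computation. Since $\omega f \in \sD^n_\uparrow$, Remark \ref{R:Pitman-ordered} gives $\omega f[(0,n)^k\to(y,1)^k] = \omega f_1(y) + \cdots + \omega f_k(y)$, and isometry yields $f[(0,n)^k\to(y,1)^k] = \omega f_1(y) + \cdots + \omega f_k(y)$; differencing successive values of $k$ isolates $\omega f_k(y) = W f_k(y)$. Part (ii) then follows immediately: $\omega f \in \sD^n_\uparrow$ by (iii), and every element of $\sD^n_\uparrow$ is fixed by $\omega$ as just observed, so $\omega^2 = \omega$. The one genuine obstacle in the whole argument is the inductive step for (iii); the key trick there is reinterpreting $h_2$ as a last passage value to the second line via Lemma \ref{L:top-LP-preserve}, which converts the needed Pitman ordering inequality into a clean path-extension comparison.
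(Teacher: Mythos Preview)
Your proof is correct and takes a genuinely different route from the paper for parts (ii) and (iii). The paper first establishes two auxiliary lemmas, Lemma~\ref{L:tm2} ($\tau_m^2=\tau_{m+1}\tau_m$) and Lemma~\ref{L:w-longest} ($\sigma_m\omega=\omega$ for every $m$), and then reads off both idempotence and the image description from $\sigma_m\omega=\omega$ together with Lemma~\ref{L:idempotency}. You bypass those lemmas entirely: your inductive proof of (iii), using that $\tau_{n-1}\cdots\tau_2$ restricted to lines $2,\dots,n$ is the $(n{-}1)$-line melon and the path-extension comparison for the top pair of lines, is self-contained, and (ii) then drops out of (iii) for free. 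The trade-off is that the paper's route delivers the stronger intermediate identity $\sigma_m\omega=\omega$, which has independent interest (see the remark immediately following the proof concerning braid relations and reduced words); your route is shorter and avoids the somewhat delicate Lemma~\ref{L:tm2}.

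One small imprecision worth flagging: Lemma~\ref{L:top-LP-preserve} applied to $f$ with $m=1$ gives $g_1(y)=f[(0,n)\to(y,1)]$, not $g[(0,n)\to(y,1)]$. To transfer this to the $g$-environment (which is where your path-extension argument lives) you need the isometry from part (i), or equivalently Proposition~\ref{P:wm-equivalent}, applied to the $\sigma_i$'s comprising $\tau_1$. Since you have already established (i) at that point, the argument goes through without trouble.
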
 
	
	To prove Proposition \ref{P:W=w} we need two short lemmas.
	
	\begin{lemma}
		\label{L:tm2}
		Let $m < n \in \N$. Then  $\tau_m^2=\tau_{m+1}\tau_m$.
	\end{lemma}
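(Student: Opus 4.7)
The plan is to let $g = \tau_m f$ and reduce the claim to showing $\tau_m g = \tau_{m+1} g$ via a line-by-line comparison. For lines $k < m$, both $\tau_m$ and $\tau_{m+1}$ leave these untouched (every $\sigma_i$ in either product has $i \ge m$), so both sides equal $g_k$. For $k \ge m+2$, I would use the decomposition $\tau_m = \sigma_m \tau_{m+1}$; since $\sigma_m$ only modifies lines $m$ and $m+1$, we immediately get $(\tau_m g)_k = (\sigma_m \tau_{m+1} g)_k = (\tau_{m+1} g)_k$. This reduces the entire argument to lines $m$ and $m+1$.

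For line $k = m$, I will apply Lemma \ref{L:top-LP-preserve} to write $(\tau_m g)_m(y) = g[(0, n) \to (y, m)]$. Since $\tau_{m+1}$ does not touch line $m$, $(\tau_{m+1} g)_m = g_m = (\tau_m f)_m$, which equals $f[(0, n) \to (y, m)]$ by Lemma \ref{L:top-LP-preserve} again. Hence it suffices to prove the restricted isometry
$$
\tau_m f\big[(0, n) \to (y, m)\big] = f\big[(0, n) \to (y, m)\big].
$$
This is a modest extension of Proposition \ref{P:wm-equivalent}: for each $\sigma_i$ with $i \in \{m, \dots, n-1\}$, the metric composition law (Lemma \ref{L:split-path}) lets me decompose a path from $(0, n)$ to $(y, m)$ into the portion above lines $i, i+1$, the two-line portion on lines $i, i+1$, and the portion below; the middle piece is preserved by the two-line isometry (Lemma \ref{L:n2-general}), while the outer pieces live on lines unchanged by $\sigma_i$.

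For line $k = m+1$, I will exploit sum preservation. Each $\sigma_i$ preserves the pairwise sum $g_i + g_{i+1}$ (Lemma \ref{L:Melon-2}) and fixes the remaining lines, so both $\tau_m$ and $\tau_{m+1}$ preserve the total sum $\sum_{k=m}^n g_k$. Combined with the equalities $(\tau_m g)_k = (\tau_{m+1} g)_k$ already established on $k \in \{m\} \cup \{m+2, \dots, n\}$, subtraction forces equality on line $m+1$ as well, completing the comparison.

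The main obstacle will be justifying the restricted isometry in the line-$m$ step. The idea runs parallel to the proof of Proposition \ref{P:wm-equivalent}, but some care is required at the boundary values $i = m$ and $i = n-1$, where one of the three pieces in the decomposition degenerates; in those cases the split must be adjusted in exactly the manner foreshadowed by the boundary discussion in the proof of Proposition \ref{P:wm-equivalent}.
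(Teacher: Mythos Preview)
Your proof is correct and essentially coincides with the paper's: after unwinding, both reduce to showing $\sigma_m(\tau_{m+1}\tau_m f) = \tau_{m+1}\tau_m f$, and both hinge on the identity $\tau_m f[(0,n)\to(y,m)] = f[(0,n)\to(y,m)]$ together with sum preservation (the paper packages the latter via Lemma~\ref{L:idempotency}). One simplification worth noting: your ``restricted isometry'' is not really an extension of Proposition~\ref{P:wm-equivalent} but a direct instance of it, since $f[(0,n)\to(y,m)]$ depends only on $(f_m,\dots,f_n)$ and $\tau_m$ acts on those lines as a composition of full bottom-to-top Pitman transforms for that sub-ensemble---so you can just cite Proposition~\ref{P:wm-equivalent} applied to $(f_m,\dots,f_n)$ rather than re-derive it.
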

	\begin{proof}
		Let $g=\tau_{m+1}\tau_mf$. We want to show that $g=\sigma_mg$. The functions $g$ and $\sigma_mg$ can only differ in the coordinates $m,m+1$. By Lemma \ref{L:idempotency} it suffices to show that $g_m=\sigma_mg_m$. 
		
		By Lemma \ref{L:top-LP-preserve}, we have 
		$$
		\sigma_mg_m(y)=\tau_m\tau_mf_m(y)=\tau_mf[(0,n)\to (y,m)]=f[(0,n)\to (y,m)],
		$$
		where the last equality is by Proposition \ref{P:wm-equivalent} repeatedly applied to $(f_m,\ldots, f_n)$.
		Since $\tau_{m+1}$ does not change coordinate $m$, Lemma \ref{L:top-LP-preserve} gives 
		\[
		g_m(y) = \tau_{m+1}\tau_mf_m(y)=\tau_mf_m(y)=f[(0,n)\to (y,m)]. \qedhere
		\] 
	\end{proof}

	\begin{lemma}\label{L:w-longest}
		For $m=1,\ldots, n-1$ we have
		$\sigma_m \omega=\omega$.
	\end{lemma}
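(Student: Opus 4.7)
The plan is to use three simple facts about the building blocks $\sigma_i$ and $\tau_m$: (a) each $\sigma_i$ is idempotent, $\sigma_i^2 = \sigma_i$; (b) $\sigma_m$ commutes with $\tau_j$ whenever $j \ge m+2$, because $\sigma_m$ only modifies coordinates $\{m, m+1\}$ while $\tau_j = \sigma_j \sigma_{j+1} \cdots \sigma_{n-1}$ only modifies coordinates $\{j, j+1, \dots, n\}$; and (c) the relation $\tau_m^2 = \tau_{m+1} \tau_m$ already proved in Lemma \ref{L:tm2}. Fact (a) is immediate from Lemma \ref{L:idempotency} applied to the two coordinates on which $\sigma_i$ acts, and fact (b) is just the observation that maps acting on disjoint coordinates commute.

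Given these, the argument is algebraic rearrangement. First, fact (a) gives
\[
\sigma_m \tau_m = \sigma_m(\sigma_m \sigma_{m+1} \cdots \sigma_{n-1}) = \sigma_m \sigma_{m+1} \cdots \sigma_{n-1} = \tau_m.
\]
Next, I combine this with Lemma \ref{L:tm2} to compute
\[
\sigma_m \tau_{m+1} \tau_m = \sigma_m \tau_m^2 = (\sigma_m \tau_m)\tau_m = \tau_m \cdot \tau_m = \tau_m^2 = \tau_{m+1} \tau_m.
\]
This is the key cancellation: $\sigma_m$ gets absorbed into the block $\tau_{m+1}\tau_m$.

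Finally, I write out $\omega = \tau_{n-1} \tau_{n-2} \cdots \tau_1$ and push $\sigma_m$ through the factors $\tau_{n-1}, \dots, \tau_{m+2}$ using fact (b), so that
\[
\sigma_m \omega = \tau_{n-1} \cdots \tau_{m+2}\,(\sigma_m \tau_{m+1} \tau_m)\, \tau_{m-1} \cdots \tau_1,
\]
and then apply the cancellation from the previous paragraph to replace $\sigma_m \tau_{m+1}\tau_m$ by $\tau_{m+1}\tau_m$, recovering $\omega$. The edge cases $m=1$ (no trailing factors) and $m=n-1$ (the product $\tau_{n-1}\cdots\tau_{m+2}$ is empty, and the $\tau_{m+1}$ block does not appear so one uses $\sigma_{n-1}\tau_{n-1} = \tau_{n-1}$ directly) are handled in the same way.

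The only subtle point to get right is the commutation claim in (b): one must be precise that $\tau_j$ only rewrites coordinates $j, j+1, \dots, n$ (a direct consequence of its definition as a composition of $\sigma_i$ with $i \ge j$), so that it genuinely commutes with $\sigma_m$ when $j \ge m+2$. Once this is established, there is no real obstacle; the proof is just a careful bookkeeping of the braid-like relations (a), (b), (c).
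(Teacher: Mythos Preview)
Your proof is correct and follows essentially the same approach as the paper's: commute $\sigma_m$ past $\tau_{n-1},\dots,\tau_{m+2}$, then absorb it into the block $\tau_{m+1}\tau_m$ using Lemma~\ref{L:tm2}. The only cosmetic difference is that the paper simplifies $\sigma_m\tau_{m+1}\tau_m$ via the one-line observation $\sigma_m\tau_{m+1}=\tau_m$ (which is just the definition of $\tau_m$), whereas you first rewrite $\tau_{m+1}\tau_m=\tau_m^2$ and then use $\sigma_m\tau_m=\tau_m$; both routes yield $\tau_m^2=\tau_{m+1}\tau_m$.
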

	\begin{proof}
		The statement for $m = n-1$ follows since $\sig_{n-1}^2 = \sig_{n-1}$ by Lemma \ref{L:idempotency}. Now assume $m < n-1$.
		When $i-j\ge 2$, the functions $\sigma_i$ and $\sigma_j$ commute because they act on disjoint coordinates. Thus, by Lemma \ref{L:tm2} we have
		\[\sigma_m\omega=\tau_{n-1}\cdots \tau_{m+2}\sigma_m\tau_{m+1}\tau_m\cdots \tau_1=\tau_{n-1} \cdots\tau_{m+2}\tau^2_m\cdots \tau_1 = \tau_{n-1} \cdots\tau_{m+2}\tau_{m+1}\tau_m\cdots \tau_1=\omega. \qedhere
		\]
	\end{proof}
	\begin{proof}[Proof of Proposition \ref{P:W=w}] Part (i) follows immediately from Proposition \ref{P:wm-equivalent}. Part (ii) follows from Lemma \ref{L:w-longest}. 
	
	For (iii), Lemma \ref{L:idempotency} (iii) implies that $\omega$ is the identity on $\sD^n_\uparrow$, so $\omega(\sD^n)\supset \sD^n_\uparrow$. For the other direction, note that $\sigma_i$ does not change $\omega f$ by Lemma \ref{L:w-longest}. By 
	Lemma \ref{L:idempotency} (iv), this implies that $f_{i+1}\preceq f_{i}$ for all $i$, so $\omega(\sD^n)\subset \sD^n_\uparrow$.
	
	Finally, for (iv), $$Wf=W\omega f=\omega f,$$
where the first equality is a consequence of part (i), and the second is a consequence of \eqref{E:DuW}, since $\omega f\in \sD_\uparrow$ by (iii).
	\end{proof}
	%

	\begin{remark}
		We mention without proof that by applying a similar framework in the $n=3$ case, $\omega$ can alternately be defined as $\sig_1 \sig_2 \sig_1$, instead of $\sig_2 \sig_1 \sig_2$. This yields a braid relation for the operators $\sig_i$, which implies that for larger $n$, $W = \omega = \sig_{i_1} \cdots \sig_{i_{\binom{n}{2}}}$ whenever the product of transpositions $(i_1 \; i_1 +1)\cdots (i_{\binom{n}{2}} \; i_{\binom{n}{2}} + 1)$ is a reduced word for the reverse permutation. This was shown in \cite*{biane2005littelmann} in the case of continuous functions $f$.
	\end{remark}
	
\section{Minimal and maximal elements}
\label{S:max-element}

In this section, we show that the melon map picks out the minimal element in the equivalence class of isometric environments under a certain natural preorder. Recall that a preorder is partial order without the antisymmetry requirement, i.e. $f\le g$ and $g\le f$ are both allowed for $f\not=g$. 
While this perspective is not necessary for the proofs in later sections, it is helpful for developing intuition about the melon map and its inverse. 

For $f \in \sD^n$, let $\lambda_f(\bx, \by), \rho_f(\bx, \by)$ be the leftmost and rightmost optimizers in $f$ from $(\bx, n)$ to $(\by, 1)$, see Lemma \ref{L:leftmost-exist}.
We say that $f \geole g$ if:
\begin{equation*}
\lambda_f(\bx, \by) \le \lambda_{g}(\bx, \by)  \;\;\text{ and }\;\;  \rho_f(\bx, \by) \le \rho_{g}(\bx, \by)\;\;\text{ for all }\;\bx, \by.
\end{equation*}
Our goal is to prove the following. 
\begin{prop}
\label{P:min-elt-W}
For every $f \in \sD^n$ with $f(0) = 0$, the melon $Wf$ is the unique minimal element with respect to $\geole$ in the isometry class of $f$.
\end{prop}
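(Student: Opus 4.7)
The plan is to separately establish: (A) $Wf$ lies in the isometry class of $f$; (B) $Wf$ is $\geole$-minimal in that class; and (C) the minimal element is unique. Step (A) is immediate from Proposition \ref{P:W=w}: part (i) gives $Wf \sim f$, and part (iii) gives $Wf \in \sD^n_\uparrow$. Moreover, the hypothesis $f(0) = 0$ propagates to any $g \sim f$: from $g[(0,n)\to(0,1)] = f[(0,n)\to(0,1)] = 0$ and the fact that the only path from $(0,n)$ to $(0,1)$ is the single vertical column, we get $\sum_{j=1}^n g_j(0) = 0$, which combined with $g_j(0) \ge 0$ forces $g_j(0) = 0$.

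Step (B) is the main work. I would show that for every $g \sim f$ and every ordered pair of tuples $\bx,\by$, both $\lambda_{Wf}(\bx,\by) \le \lambda_g(\bx,\by)$ and $\rho_{Wf}(\bx,\by) \le \rho_g(\bx,\by)$ hold. My plan is to induct on $n$, using the factorization $W = \omega = \sig_{i_1} \cdots \sig_{i_N}$ from \eqref{eq:w} into two-line Pitman transforms. The inductive step would show that a single application of $\sig_i$ produces an environment whose leftmost and rightmost optimizers between fixed endpoints are shifted leftward (or remain fixed). Since $\sig_i$ acts only on lines $i, i+1$, this reduces to a two-line analysis: given an optimizer in $f$ with jump times along these two lines, the Pitman ordering of $(\sig_i f_i, \sig_i f_{i+1})$ combined with a swap construction in the spirit of Lemma \ref{L:quadrangle} produces an optimizer in $\sig_i f$ with jump times shifted leftward. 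The metric composition law (Lemma \ref{L:split-path}) then allows the full multi-path structure to be decomposed across the affected two-line segment, analogously to the proof of Proposition \ref{P:wm-equivalent}.

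For step (C), suppose $g \sim f$ is another $\geole$-minimal element. Applying (B) to $g$ (as a member of the isometry class) gives $\lambda_{Wf} \le \lambda_g$; the assumed minimality of $g$ applied to $Wf$ gives $\lambda_g \le \lambda_{Wf}$, and similarly for the rightmost optimizers. Since $\le$ is antisymmetric on $k$-tuples (each path is determined by its jump times, and componentwise $\le$ on jump times is a genuine partial order), this forces $\lambda_g = \lambda_{Wf}$ and $\rho_g = \rho_{Wf}$ for all $\bx,\by$. Specializing to $\bx = (0,\ldots,0)$ and $\by = (y,\ldots,y)$: because $Wf \in \sD^n_\uparrow$ with $Wf(0) = 0$, Remark \ref{R:Pitman-ordered} identifies the leftmost optimizer as the canonical top-$k$-lines tuple whose underlying set equals $[0,y] \times \{1, \ldots, k\} \cup \{0\} \times \{k+1, \ldots, n\}$. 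Its $dg$-measure, using $g(0) = 0$, equals $\sum_{j=1}^k g_j(y)$, which must equal the last passage value $\sum_{j=1}^k Wf_j(y)$. Taking successive differences in $k$ yields $g_j(y) = Wf_j(y)$ for every $j$ and $y$, whence $g = Wf$. The main obstacle is the inductive step of (B): extending the two-line swap construction to show that a single Pitman transform moves leftmost and rightmost optimizers leftward for arbitrary multi-path endpoints $\bx,\by$, which requires carefully handling multi-path disjointness and the cadlag structure.
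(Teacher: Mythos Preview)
Your approach matches the paper's at the structural level: factor $W$ into Pitman transforms $\sig_i$, show each $\sig_i$ pushes leftmost and rightmost optimizers leftward (this is the paper's Lemma~\ref{L:push-back-3}, built from the two-line Lemmas~\ref{L:push-back} and~\ref{L:push-back-2}), and then argue uniqueness via the top-$k$-lines optimizer at $\bx = 0^k$, $\by = y^k$. One small omission: your chain of $\sig_i$'s applied to $g$ yields $Wg \geole g$, and to reach $Wf \geole g$ you need the observation $Wg = Wf$, which holds because $W$ is defined purely through last passage values and $g \sim f$. The paper uses exactly this step.

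The one place where your plan diverges is the mechanism for the two-line lemma. You propose ``a swap construction in the spirit of Lemma~\ref{L:quadrangle}'', but that lemma swaps two paths within a \emph{single} environment, whereas here you are comparing optimizers in $f$ versus optimizers in $\sig_i f$, two different environments. It is not clear how a swap argument bridges that. The paper instead proves the single-path two-line case (Lemma~\ref{L:push-back}) by a direct computation with the explicit formula $Wf_1(t) = f_1(t) + s(0,t)$, $Wf_2(t) = f_2(t) - s(0,t)$, tracking the argmax sets $I_f(x,y)$ and $I_{Wf}(x,y)$ and showing the latter lies to the left; this is done in the appendix alongside Lemma~\ref{L:wm-lem}. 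The multi-path extension (Lemma~\ref{L:push-back-2}) then follows by the same interval decomposition used in Lemma~\ref{L:n2-general}, and the lift to $\sig_i$ on $n$ lines (Lemma~\ref{L:push-back-3}) uses the metric composition law, as you anticipated. So your outline is right, but the core two-line input is an explicit cadlag computation rather than a combinatorial swap.
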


\begin{remark}
The proof of Proposition \ref{P:min-elt-W} shows that $Wf \geole g$ for any $f, g \in \sD^n$ with $f \sim g$. The condition that $f(0) = 0$ is only necessary for the uniqueness statement. Indeed, consider the example where $f_i(x) = 1$ for all $x \ge 0$ so that $df$ consists of a line of atoms at $\{0\} \X \{1, \dots, n\}$. Then $Wf_1 = n$ and $Wf_i = 0$ for all $i \ge 2$ so $Wf \ne f$. However, it is not difficult to check that $\lambda_f(\bx, \by) = \lambda_{Wf}(\bx, \by), \rho_f(\bx, \by) = \rho_{Wf}(\bx, \by)$ for all $\bx, \by$ so we have both $f \geole Wf$ and $Wf \geole f$. 
\end{remark}

The main lemma needed for Proposition \ref{P:min-elt-W} concerns the Pitman transform $W:\sD^2 \to \sD^2$. Its proof is in a similar vein to the proof of Lemma \ref{L:wm-lem}. As a result, we defer it to the appendix where it is proven with Lemma \ref{L:wm-lem}.

\begin{lemma}
\label{L:push-back}
Let $f \in \sD^2$ and let $x \le y$. Then
$$
\rho_{Wf}(x, y) \le \rho_f(x, y) \quad \text{ and } \quad \lambda_{Wf}(x, y) \le \lambda_f(x, y).
$$
\end{lemma}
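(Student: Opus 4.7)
My plan is to reduce the statement to a one-parameter monotonicity comparison and then exploit the explicit formulas for the two-line Pitman transform from Lemma~\ref{L:Melon-2}.

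A path from $(x,2)$ to $(y,1)$ is determined by its single jump time $t \in [x,y]$, and has $f$-length
$$
L_f(t) := df([x,t]\X\{2\}) + df([t,y]\X\{1\}) = f_2(t) - f_2(x^-) + f_1(y) - f_1(t^-).
$$
Since $f \in \sD^2$ has no negative jumps, $L_f$ is upper semicontinuous on $[x,y]$, so its argmax is a nonempty closed subset of $[x,y]$, whose smallest (resp.\ largest) element corresponds to $\lambda_f(x,y)$ (resp.\ $\rho_f(x,y)$) under the identification of a path with its jump time. The same applies to $L_{Wf}$.

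Next, I substitute the formulas $Wf_1(t)=f_1(t)+M(t)$ and $Wf_2(t)=f_2(t)-M(t)$ from Lemma~\ref{L:Melon-2}, where $M(t):=\sup_{s\in[0,t]}\bigl[f_2(s)-f_1(s^-)\bigr]$. Using that $M(t^-) = \sup_{s<t}[f_2(s)-f_1(s^-)]$, a direct computation yields
$$
L_{Wf}(t) = L_f(t) + B(t), \qquad B(t) := M(x^-) + M(y) - M(t) - M(t^-).
$$
Because $M$ is non-decreasing, $B$ is non-increasing on $[x,y]$. From here the two inequalities follow from an elementary argmax-comparison principle: adding a non-increasing function to $L_f$ cannot move the leftmost or rightmost argmax to the right. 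Concretely, for $t > \rho_f(x,y)$ upper semicontinuity and the rightmost-argmax property give $L_f(t) < L_f(\rho_f)$, while $B(t)\le B(\rho_f)$, so $L_{Wf}(t) < L_{Wf}(\rho_f)$, forcing $\rho_{Wf}(x,y)\le \rho_f(x,y)$. For the leftmost inequality I argue by contradiction: if $\lambda_{Wf}(x,y) > \lambda_f(x,y)$, then $L_f(\lambda_{Wf})\le L_f(\lambda_f)$ (maximality) and $B(\lambda_{Wf})\le B(\lambda_f)$ (monotonicity), so $L_{Wf}(\lambda_{Wf})\le L_{Wf}(\lambda_f)$; combined with the reverse inequality (since $\lambda_{Wf}$ is an argmax of $L_{Wf}$), equality holds, making $\lambda_f$ another argmax of $L_{Wf}$ strictly to the left of $\lambda_{Wf}$, contradicting the definition of $\lambda_{Wf}$.

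The main obstacle is the cadlag bookkeeping needed to justify $L_{Wf}(t)=L_f(t)+B(t)$. In particular one must correctly identify $Wf_1(t^-) = f_1(t^-)+M(t^-)$ and $Wf_2(x^-) = f_2(x^-)-M(x^-)$, and confirm that parameterizing paths by jump times $t\in[x,y]$ respects the ``to the left of'' partial order on paths at atoms of $f$ located at $x$, $y$, or interior jump-times; these are exactly the delicate points in the same spirit as the proof of Lemma~\ref{L:wm-lem}. Once this bookkeeping is settled, the additive decomposition with $B$ non-increasing makes both monotonicity statements immediate.
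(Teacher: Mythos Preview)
Your argument is correct. Both you and the paper parametrize a two-line geodesic by its single jump time $t\in[x,y]$ and work with the same ingredients: your $M$ is the paper's $s(0,\cdot)$, and the argmax sets you describe are exactly the paper's $I_f(x,y)$ and $I_{Wf}(x,y)$. Where the paper proceeds by a case split on whether $s(0,x^-)=s(0,y)$ or $s(0,x^-)<s(0,y)$, you instead package everything as the additive decomposition $L_{Wf}=L_f+B$ with $B(t)=M(x^-)+M(y)-M(t)-M(t^-)$ non-increasing, and then invoke a one-line argmax-comparison principle. Your route is cleaner and avoids the case analysis; the paper's argument actually yields sharper information in the non-trivial case (it shows $I_{Wf}\subset[x,A]$ and $I_f\subset[B,y]$ lie in \emph{disjoint} intervals), though that extra precision is never used later. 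The cadlag bookkeeping you flag is routine: $Wf_1(t^-)=f_1(t^-)+M(t^-)$ and $Wf_2(x^-)=f_2(x^-)-M(x^-)$ follow by taking left limits of the Pitman formulas (with the convention $M(0^-)=0$ when $x=0$, consistent with $Wf_2(0^-)=f_2(0^-)=0$), and the identification of the left-of order on two-line paths with the usual order on jump times is immediate from the definitions.
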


We can extend this to general optimizers using the same ideas as in Lemma \ref{L:n2-general}. As the proof is essentially identical, we omit it.

\begin{lemma}
\label{L:push-back-2}
For all $f \in \sD^2$, we have $Wf \geole f$.
\end{lemma}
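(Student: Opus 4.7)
The plan is to reduce the $k$-tuple statement to the single-path Lemma \ref{L:push-back} via the same decomposition used in the proof of Lemma \ref{L:n2-general}. Fix $\bx, \by$ with the obvious compatibility and set $\bp = (\bx, 2)$, $\bq = (\by, 1)$, $\pi = \lambda_f(\bx, \by)$. Recall the counting function $N(z) = \#\{i : z \in [x_i, y_i]\}$, let $Z = N^{-1}(\{2,\dots\})$, and let $I_1, \dots, I_\ell$ be the components of $N^{-1}(1)$ with endpoints $\hat a_j \le \hat b_j$ and open/closed markers $*_j, *_j' \in \{+, -\}$. As in the proof of Lemma \ref{L:n2-general},
\[
\cup \pi \;=\; \bigl(Z \X \{1,2\}\bigr) \;\cup\; \bigcup_{j=1}^{\ell} \rho_j,
\]
where each $\rho_j \subset I_j \X \{1,2\}$ is a single path from $(\hat a_j, 2)^{*_j}$ to $(\hat b_j, 1)^{*_j'}$.

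The first substantive step is to observe that each $\rho_j$ is itself the leftmost $f$-optimizer between its endpoints. Otherwise there is an $f$-optimizer $\tilde\rho_j$ with $\tilde\rho_j \le \rho_j$ and $\tilde\rho_j \ne \rho_j$; reassembling $Z \X \{1,2\}$, $\tilde\rho_j$, and the remaining $\rho_i$ into a disjoint $k$-tuple $\tilde\pi$ yields $|\tilde\pi|_f = |\pi|_f$ and $\tilde\pi \le \pi$, $\tilde\pi \ne \pi$, contradicting minimality of $\pi$. Now applying Lemma \ref{L:push-back} (or its analogue with $+/-$ markers, obtained by the limiting argument of Corollary \ref{C:endpoints}) segment by segment produces leftmost $Wf$-optimizers $\rho_j^W$ from $(\hat a_j, 2)^{*_j}$ to $(\hat b_j, 1)^{*_j'}$ with $\rho_j^W \le \rho_j$.

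Reassemble these into
\[
\tau \;=\; \bigl(Z \X \{1,2\}\bigr) \;\cup\; \bigcup_{j=1}^{\ell} \rho_j^W,
\]
which, by walking from left to right and tracking the transitions of $N$ between $1$ and $2$, canonically splits into a disjoint $k$-tuple from $\bp$ to $\bq$ with $\tau \le \pi$ coordinatewise. Since $Wf_1 + Wf_2 = f_1 + f_2$ by Lemma \ref{L:Melon-2}, the $Z \X \{1,2\}$ block contributes the same mass under $Wf$ as under $f$, and each $|\rho_j^W|_{Wf}$ equals $|\rho_j|_f$ by the single-path isometry applied to endpoint-marked last passage values. Combined with Lemma \ref{L:n2-general}, this yields $|\tau|_{Wf} = f[\bp \to \bq] = Wf[\bp \to \bq]$, so $\tau$ is a $Wf$-optimizer, giving $\lambda_{Wf}(\bx, \by) \le \tau \le \lambda_f(\bx, \by)$. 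The inequality $\rho_{Wf}(\bx, \by) \le \rho_f(\bx, \by)$ follows by the same argument with ``leftmost'' replaced by ``rightmost'' throughout.

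The main obstacle I anticipate is the bookkeeping around the open/closed markers $*_j, *_j'$: Lemma \ref{L:push-back} is stated for closed endpoints only, and one must check that the left-push property still holds when one or both endpoints carry a $+$ or $-$. This should come from the same limiting procedure as in Corollary \ref{C:endpoints}, combined with the fact (via Lemma \ref{L:usc-lpp} and Zorn's lemma as in Lemma \ref{L:leftmost-exist}) that leftmost optimizers are stable under Hausdorff limits of shrinking closed subintervals. Everything else -- the decomposition, the minimality contradiction, and the reassembly -- is a direct transcription of the argument in Lemma \ref{L:n2-general}.
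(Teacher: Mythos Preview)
Your proposal is correct and is precisely the approach the paper has in mind: the paper omits the proof, stating only that it ``is essentially identical'' to that of Lemma~\ref{L:n2-general}, and your reduction via the $N$-decomposition to the single-path Lemma~\ref{L:push-back} is exactly that argument. The marker issue you flag is the only real bookkeeping required and is handled just as you suggest, by the limiting procedure of Corollary~\ref{C:endpoints}.
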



We can extend this to $\sig_i$ by a simple application of the metric composition law. The proof is essentially identical to the proof of Proposition \ref{P:wm-equivalent} so we omit it.

\begin{lemma}
\label{L:push-back-3}
For all $f \in \sD^n$ and $i \in \{1, \dots, n-1\}$, we have $\sig_i f \geole f$.
\end{lemma}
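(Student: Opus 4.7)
The plan is to mirror the proof of Proposition~\ref{P:wm-equivalent}, reducing the $n$-line statement to the two-line Lemma~\ref{L:push-back-2} by means of the metric composition law. Fix $\bx, \by$ and let $\bp = (\bx, n)$, $\bq = (\by, 1)$. Given any disjoint optimizer $\pi$ in $f$ from $\bp$ to $\bq$, I record its transition times into level $i+1$ and out of level $i$, calling these $\bw, \bz \in [0,\infty)^k$ respectively. Two applications of Lemma~\ref{L:split-path} decompose $\pi$ into three sub-$k$-tuples: a bottom piece $\pi^{\rm b}$ from $\bp$ to $(\bw, i+2)$ supported on lines $i+2, \dots, n$; a middle piece $\pi^{\rm m}$ from $(\bw, i+1)$ to $(\bz, i)$ supported on lines $i, i+1$; and a top piece $\pi^{\rm t}$ from $(\bz, i-1)$ to $\bq$ supported on lines $1, \dots, i-1$. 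Since $|\pi|_f = |\pi^{\rm b}|_f + |\pi^{\rm m}|_f + |\pi^{\rm t}|_f = f[\bp \to \bq]$, each piece is itself optimal in its sub-problem. The boundary cases $i \in \{1, n-1\}$ simply drop one of the outer pieces.

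The key observation is that $\sig_i$ leaves $\pi^{\rm b}$ and $\pi^{\rm t}$ untouched, while the middle sub-problem is precisely a two-line problem on $(f_i, f_{i+1})$, which $\sig_i$ replaces by $W(f_i, f_{i+1})$. By Lemma~\ref{L:push-back-2} applied to this two-line function at endpoints $(\bw, \bz)$,
$$
\lambda_{W(f_i, f_{i+1})}(\bw, \bz) \le \lambda_{(f_i, f_{i+1})}(\bw, \bz) \quad \text{and} \quad \rho_{W(f_i, f_{i+1})}(\bw, \bz) \le \rho_{(f_i, f_{i+1})}(\bw, \bz),
$$
and by Lemma~\ref{L:n2-general} the middle sub-length is preserved under $\sig_i$. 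Splicing a two-line optimizer in $W(f_i,f_{i+1})$ back in place of $\pi^{\rm m}$ therefore produces a disjoint $k$-tuple in $\sig_i f$ from $\bp$ to $\bq$ whose length equals $f[\bp \to \bq] = \sig_i f[\bp \to \bq]$, i.e., an optimizer; conversely, starting from an optimizer in $\sig_i f$ and splicing a two-line optimizer in $(f_i, f_{i+1})$ back in the middle gives an optimizer in $f$.

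To conclude $\lambda_{\sig_i f}(\bx,\by) \le \lambda_f(\bx,\by)$, I apply this construction to $\pi = \lambda_f(\bx, \by)$, splicing in $\lambda_{W(f_i,f_{i+1})}(\bw, \bz)$ to produce an optimizer $\pi^* \le \lambda_f$ in $\sig_i f$; then $\lambda_{\sig_i f} \le \pi^* \le \lambda_f$. For $\rho_{\sig_i f}(\bx,\by) \le \rho_f(\bx,\by)$, I instead run the decomposition on the $\sig_i f$-optimizer $\pi' = \rho_{\sig_i f}(\bx, \by)$ and splice in $\rho_{(f_i, f_{i+1})}(\bw, \bz)$; this yields an optimizer $\pi$ in $f$ with $\pi' \le \pi \le \rho_f$, hence $\rho_{\sig_i f} \le \rho_f$.

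The main obstacle is really just verifying that the decomposition and re-splicing respect disjointness of the combined $k$-tuple. Within each of the three slabs of lines disjointness is inherited from the sub-$k$-tuple, while between slabs it follows from the monotonicity $w_\ell \le w_m$ and $z_\ell \le z_m$ for $\ell < m$ (forced by the fact that $\pi^{\rm b}$ and $\pi^{\rm t}$ are themselves ordered tuples). The substantive input is Lemma~\ref{L:push-back-2}, which is the 2-line case already in hand; everything else is bookkeeping around the composition law.
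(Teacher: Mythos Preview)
Your proof is correct and follows exactly the approach the paper indicates: the paper omits the argument, saying it is ``essentially identical to the proof of Proposition~\ref{P:wm-equivalent},'' and you have carried out precisely that reduction---splitting at levels $i$ and $i+1$ via Lemma~\ref{L:split-path} and invoking Lemma~\ref{L:push-back-2} on the two-line middle piece. The bookkeeping you describe (optimality of sub-pieces, disjointness after splicing, the asymmetric treatment of $\lambda$ versus $\rho$) is exactly what is needed.
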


\begin{proof}[Proof of Proposition \ref{P:min-elt-W}]
By Lemma \ref{L:push-back-3} and the definition, we have $Wf \geole f$ for all $f$. Now, for any $g$ in the same isometry class as $f$, we have $Wf = Wg \geole g$. Since $\geole$ is only a preorder, to prove that $Wf$ is the \emph{unique} minimal element we still need to check that if $f \geole Wf$ then $f = Wf$.

Indeed, since $Wf$ is Pitman ordered the leftmost optimizer $\lambda_{Wf}(0^k, x^k)$ simply follows the top $k$ paths $Wf_1, \dots, Wf_k$ on the interval $(0, x)$. Since no disjoint $k$-tuple is to the left of this optimizer, the same must be true for $\lambda_{f}(0^k, x^k)$ if $f \geole Wf$. Therefore for all $x, k$ we have 
$$
\sum_{i=1}^k Wf_i(x) = f[(0^k, n) \to (x^k, 1)] = \sum_{i=1}^k f_i(x) - f_i(0^-) + \sum_{i=k + 1}^n f_i(0) - f_i(0^-).
$$
Since $f(0^-) = 0$ by definition and $f(0) = 0$ by assumptions, this implies that $Wf = f$.
\end{proof}

Just as $Wf$ identifies a minimal element of the isometry class of $f$, we would also like to identify a maximal element. We can do this with a straightforward symmetric definition if we first restrict to functions defined on $[0, t]$ rather than $[0, \infty)$.

Let $\sD^n_t$ be the set of $n$-tuples of cadlag functions $f_1, \dots, f_n:[0, t] \to \R$ with only positive jumps. For $f \in \sD^n_t$, we can define the truncated melon map $W_t:\sD^n_t \to \sD^n_t$ by \eqref{E:melon-def-body}. Again, we can think of $W_t f$ as picking out a distinguished minimal element in the isometry class of $f$ in $\sD^n_t$. In the finite setting, a conjugation of the map $W_t$ produces a maximal element.
For $f \in \sD^n_t$, recall $R_t f \in \sD^n_t$ is the rotation of $f$ by 180 degrees. Define $M_t = R_t W_t R_t$. Then we have the following.
\begin{prop}
\label{P:max-elt}
For every $f \in \sD^n_t$ with $f(t^-) = f(t)$, $M_t f \in \sD^n_t$ is the unique maximal element in the isometry class of $f$ with respect to $\geole$.
\end{prop}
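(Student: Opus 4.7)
The plan is to deduce Proposition \ref{P:max-elt} from Proposition \ref{P:min-elt-W} by conjugating with $R_t$. To this end, I would first observe that Proposition \ref{P:min-elt-W} applies verbatim in $\sD^n_t$ (with the truncated melon $W_t$), since its proof only invokes the metric composition law, Lemma \ref{L:leftmost-exist}, and Lemma \ref{L:push-back-3}, all of which hold identically on $[0,t]$.

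Next I would establish two key properties of $R_t$: (a) it preserves isometry classes, so $f\sim g$ if and only if $R_tf\sim R_tg$; and (b) it reverses the preorder $\geole$, so $f\geole g$ if and only if $R_tg\geole R_tf$. Both follow from the path bijection $\pi\mapsto R_t\pi$ induced by the rotation $(s,i)\mapsto(t-s,n+1-i)$. This bijection sends a path from $(x,n)$ to $(y,1)$ in $f$ to a path from $(t-y,n)$ to $(t-x,1)$ in $R_tf$, and since $R_t$ merely relabels the atoms of $df$ we have $|\pi|_f=|R_t\pi|_{R_tf}$. Extending this to disjoint $k$-tuples yields (a). For (b), one checks directly from the definition of ``to the left of'' that if $\pi$ lies to the left of $\rho$, then $R_t\pi$ lies to the right of $R_t\rho$; consequently $R_t$ swaps the leftmost and rightmost optimizers, and the two swaps combine to give the desired order reversal.

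With (a) and (b) in hand, the conclusion is immediate. The boundary hypothesis $f(t^-)=f(t)$ translates, via the identity $(R_tf)_j(0)=f_{n+1-j}(t)-f_{n+1-j}(t^-)$ together with the convention $(R_tf)_j(0^-)=0$, precisely into $(R_tf)(0)=0$, which is what Proposition \ref{P:min-elt-W} requires. That proposition, applied to $R_tf$, produces $W_tR_tf$ as the unique $\geole$-minimal element of the isometry class of $R_tf$. Applying $R_t$ once more and invoking (a) and (b) yields $M_tf=R_tW_tR_tf$ as the unique $\geole$-maximal element of the isometry class of $f$. The only step that requires real care is the clean statement and verification of (a) and (b): one has to track how endpoints and the left/right relation transform under $(s,i)\mapsto(t-s,n+1-i)$, but beyond this bookkeeping no new ideas are needed.
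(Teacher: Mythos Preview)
Your proposal is correct and is essentially the same argument as the paper's, just packaged slightly differently: the paper computes directly that $\rho_{M_tf}(\bx,\by)=R_t[\lambda_{W_tR_tf}(R_t\bx,R_t\by)]\ge R_t[\lambda_{R_tf}(R_t\bx,R_t\by)]=\rho_f(\bx,\by)$ (and similarly for $\lambda$), whereas you abstract this into the two properties (a) and (b) of $R_t$ and then invoke Proposition~\ref{P:min-elt-W} as a black box. Your observation that the hypothesis $f(t^-)=f(t)$ is exactly what makes $(R_tf)(0)=0$ is the key bookkeeping point, and it is handled correctly.
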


\begin{proof}
The main idea is that the reflection operator $R$ sends left most-geodesics to right-most geodesics and vice versa. For any $f$ and $\bx, \by$, we have
$$
\lambda_{Rf}(R \bx, R \by) = R[\rho_{f}(\bx, \by)] \quad \text{ and } \quad \rho_{Rf}(R \bx, R \by) = R[\lambda_{f}(\bx, \by)].
$$
Therefore
$$
\rho_{RW_t Rf}(\bx, \by) = R[\lambda_{W_t Rf}(R\bx, R\by)] \ge  R[\lambda_{Rf}(R\bx, R\by)] = \rho_{f}(\bx, \by),
$$
where the inequality uses that $W_t Rf \geole Rf$. A similar calculation for $\lambda_{RW_t Rf}$ shows that $f \geole M_t f$. Using that $M_t f = M_t g$ for all $g$ isometric to $f$ yields the that $g \geole M_t f$ for all $g \sim f$. Finally, uniqueness follows from the same reasoning as in the proof of Proposition \ref{P:min-elt-W}, but working with rightmost instead of leftmost optimizers.
\end{proof}

\section{The infinite bijection}
\label{S:bijectivity}
In this section we find the inverse of the melon map $W$. Let $\sD^n_t$ be the set of $n$-tuples of cadlag functions $f_1, \dots, f_n:[0, t] \to \R$ with only positive jumps. For $f \in \sD^n_t$, we can define the truncated melon map $W_t:\sD^n_t \to \sD^n_t$ by \eqref{E:melon-def-body}. 
Recall from the introduction \eqref{E:M-def-intro} that $M_t$ is the conjugate of $W_t$ by the 180 degree rotation $R_t$, namely $M_t=R_tW_tR_t$. By the definition of $W$, this can be written as
\begin{equation}
\label{E:Mtg-def}
M_tf_{n-k+1}(x^-)+\ldots + M_tf_n(x^-)= f[(0,n)^k\to(t,1)^k] - f[(x,n)^k\to (t,1)^k].
\end{equation}
For $f \in \sD^n$, we define the (infinite) \textbf{lemon map} by the limiting formula
\begin{equation}
\label{E:Mf-formula}
M f = \lim_{t \to \infty} M_t f.
\end{equation}
We note in passing that $M$ records a collection of multi-point Busemann functions for the metric environment defined by $f$. Before studying $M$, we must justify why the limit exists.

\begin{lemma}
	\label{L:M-exists}
For any $f \in \sD^n$, the limit \eqref{E:Mf-formula} exists in the topology of uniform convergence on compact sets. In particular, the map $M$ is well-defined from $\sD^n$ to $\sD^n$.
\end{lemma}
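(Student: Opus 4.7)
The plan is to reduce the proof to pointwise monotone convergence in $t$ of the auxiliary sums
\[
S_k(x, t) := \sum_{i=n-k+1}^n M_t f_i(x^-) = f\bigl[(0,n)^k \to (t,1)^k\bigr] - f\bigl[(x,n)^k \to (t,1)^k\bigr]
\]
(via \eqref{E:Mtg-def}) for $k = 1, \dots, n$; since $M_t f_i(x^-) = S_{n-i+1}(x, t) - S_{n-i}(x, t)$ with the convention $S_0 \equiv 0$, convergence of each $S_k$ implies convergence of each $M_t f_i(x^-)$. Monotonicity of $S_k(x, \cdot)$ in $t$ is immediate: applying the quadrangle inequality (Lemma \ref{L:quadrangle}) with $\bp = (0, n)^k$, $\bp' = (x, n)^k$, $\bq = (t, 1)^k$, $\bq' = (t', 1)^k$ for $t \le t'$ rearranges directly to $S_k(x, t') \le S_k(x, t)$.

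For the matching lower bound independent of $t$, I would construct a competing disjoint $k$-tuple from $(0, n)^k$ to $(t, 1)^k$ starting from any disjoint optimizer $\rho$ of $f[(x, n)^k \to (t, 1)^k]$. Essential disjointness forces, for each $i < k$, the path $\rho_i$ to execute instantaneous upward jumps at time $x$ through levels $n, n-1, \dots, n-k+i+1$ before settling on level $n-k+i$; let $\rho_i'$ denote the restriction of $\rho_i$ to levels $\le n-k+i$, which is then a valid path from $(x, n-k+i)$ to $(t, 1)$. For each $i$, define
\[
\tilde\rho_i := \bigl\{(0, j) : j = n-k+i+1, \dots, n\bigr\} \cup \bigl([0, x] \X \{n-k+i\}\bigr) \cup \rho_i',
\]
so that $\tilde\rho_i$ jumps instantaneously from $(0, n)$ to $(0, n-k+i)$, traverses $[0, x] \X \{n-k+i\}$, and then continues as $\rho_i'$. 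One verifies that $\tilde\rho$ is a disjoint $k$-tuple from $(0, n)^k$ to $(t, 1)^k$. A level-by-level computation of $df(\cup \tilde\rho)$ — noting that on each level $j \in \{n-k+1, \dots, n\}$ the level-$j$ slice of $\cup \tilde\rho$ equals $[0, x]$ joined to the level-$j$ slice of $\cup \rho$, with intersection $\{x\}$ of $df$-mass $\Delta f_j(x)$, while on levels $j < n-k+1$ the two slices coincide — yields $|\tilde\rho|_f = |\rho|_f + \sum_{j=n-k+1}^n f_j(x^-)$. Therefore
\[
S_k(x, t) \ge \sum_{j=n-k+1}^n f_j(x^-),
\]
a bound independent of $t$.

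Combining monotonicity in $t$ with this lower bound, $S_k(x, t) \downarrow S_k^\infty(x)$ pointwise as $t \to \infty$, and hence $M_t f_i(x^-) \to Mf_i(x^-) := S_{n-i+1}^\infty(x) - S_{n-i}^\infty(x)$ pointwise. For uniform convergence on compact $x$-intervals, each $S_k(\cdot, t)$ is caglad (left-continuous with right limits) with jump set contained in the fixed jump set of $f$, and is uniformly bounded on compacts by $S_k(x, t) \le S_k(x, x)$ above and the lower bound above. A standard Dini-type argument, exploiting the monotonicity in $t$ and the fact that the jump locations of $S_k(\cdot, t)$ are fixed, then upgrades pointwise to uniform convergence on compacts; the cadlag-with-positive-jumps property is preserved in the limit, so $Mf \in \sD^n$. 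The main obstacle is the careful level-by-level bookkeeping in verifying the length formula for $\tilde\rho$, particularly the tracking of shared atoms of $df$ at the points $(x, j)$.
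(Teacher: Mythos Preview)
Your pointwise argument is correct and matches the paper's: the quadrangle inequality gives monotonicity of $S_k(x,t)$ in $t$, and your path-extension construction (which the paper states in one line as ``append the segments $[0,x)\times\{i\}$, $i=n-k+1,\dots,n$'') gives the lower bound $S_k(x,t)\ge\sum_{j=n-k+1}^n f_j(x^-)$. The level-by-level bookkeeping you carry out is right.

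The gap is in the uniform-convergence step. A ``standard Dini-type argument'' does not apply here: Dini requires continuity of the limit, and monotone pointwise convergence of caglad functions with a common (countable) jump set does \emph{not} in general yield uniform convergence. For instance, take $g_m(x)=\mathbf{1}\{x>1-1/m\}$ on $[0,1]$: these are caglad, decreasing in $m$, with jumps in the fixed set $\{1-1/m:m\ge 1\}$, but $\sup_x|g_m(x)-g(x)|=1$ for all $m$. Boundedness and fixed jump locations are not enough; you need an extra piece of structure.

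The paper supplies that structure by a \emph{second} use of the quadrangle inequality, this time varying $x$ rather than $t$. With $\bp=(x_1,n)^k$, $\bp'=(x_2,n)^k$, $\bq=(t_1,1)^k$, $\bq'=(t_2,1)^k$ and $x_1\le x_2\le t_1\le t_2$, Lemma~\ref{L:quadrangle} rearranges to
\[
S_k(x_1,t_2)-S_k(x_1,t_1)\ \ge\ S_k(x_2,t_2)-S_k(x_2,t_1),
\]
so $x\mapsto S_k(x,t_2)-S_k(x,t_1)$ is nonincreasing. Sending $t_2\to\infty$, $x\mapsto S_k^\infty(x)-S_k(x,t_1)$ is nonincreasing, nonpositive, and vanishes at $x=0$; hence on $[0,X]$ its supremum in absolute value is $|S_k^\infty(X)-S_k(X,t_1)|\to 0$. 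This gives uniform convergence directly, after which $Mf\in\sD^n$ follows since $\sD^n$ is closed under uniform-on-compact limits. You already have the quadrangle inequality in hand; use it once more.
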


\begin{proof}
For $g \in \sD^n$, we let $\Sig_k g = g_{n-k+1} + \dots + g_n$. For every $k \in \{1, \dots, n\}$, $x_1 \le x_2 \le t_1 \le t_2 \in [0, \infty)$, we have
\begin{equation}
\label{E:Sigmono-2}
\Sig_k M_{t_2}f(x^-_1) - \Sig_k M_{t_1}f(x^-_1) \ge \Sig_k M_{t_2}f(x^-_2) - \Sig_k M_{t_1}f(x^-_2).
\end{equation}
This is a consequence of the definition \eqref{E:Mtg-def} and the quadrangle inequality (Lemma \ref{L:quadrangle}), with $\bp = (x_1, n)^k, \bp' = (x_2, n)^k, \bq = (t_1, 1)^k, \bq' = (t_2, 1)^k$. 

Plugging in $x_1 = 0$, the left hand side of \eqref{E:Sigmono-2} equals $0$. Thereforese see $\Sig_k M_t f(x_2^-)$ is nonincreasing in $t$ for every fixed $x_2$. Moreover, from \eqref{E:Mtg-def} we can conclude that
\begin{equation}
\label{E:SigkM}
\Sig_k M_{t} f(x^-) \ge \Sig_k f(x^-)
\end{equation}
since any disjoint $k$-tuple from $(x, n)^k$ to $(t, 1)^k$ can be extended to a disjoint $k$-tuple from $(0, n)^k$ to $(t, 1)^k$ by appending on the segments $[0, x) \X \{i\}, i = n, \dots, n - k + 1$. Combining these facts implies that there is some collection of functions $Mf = (M_1f, \dots, M_nf)$ such that $\Sig_k M_tf \cvgdown \Sig_k Mf$ pointwise for all $k$.

Next, we establish uniform convergence. Returning to \eqref{E:Sigmono-2}, we can replace $M_{t_2}$ with $M$, to get that $g_t(x) := \Sig_k Mf(x) -\Sig_k M_tf(x)$ is monotone in $x$ for every $t$. Moreover, we have already shown $g_t(x) \to 0$ pointwise in $x$. Any functions $g_t$ with these two properties must converge uniformly on compact sets to 0. This implies that $M_t f \to Mf$ uniformly on compact sets.

Finally, the space $\sD^n$ is closed in the topology of uniform-on-compact convergence so $M f \in \sD^n$, yielding the final part of the lemma.
\end{proof}
The definition of $M_t = R_t W_t R_t$ and Lemma \ref{L:M-exists} suggest that many properties of the melon map $W$ in Proposition \ref{P:W=w} should have parallels for the lemon map $M$. This is indeed the case.

\begin{lemma} The lemon map $M$ has the following properties.
	\label{L:M-properties-1}
\begin{enumerate}[label=(\roman*)]
	\item (Isometry) $Mg \sim g$.
	\item (Idempotence) $M^2 = M$.
\end{enumerate}
\end{lemma}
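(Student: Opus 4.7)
The plan is to bootstrap both parts from the analogous facts about the truncated map $M_t = R_t W_t R_t$. First, I observe that $M_t$ is an isometry on $\sD^n_t$: the rotation $R_t$ sends a disjoint $k$-tuple $\pi$ from $\bp$ to $\bq$ to a disjoint $k$-tuple $R_t \pi$ from $R_t \bq$ to $R_t \bp$ with $|\pi|_{R_t f} = |R_t \pi|_f$, and $W_t$ is an isometry on $\sD^n_t$ by Proposition \ref{P:W=w}(i); composing gives $M_t g[\bp \to \bq] = g[\bp \to \bq]$ whenever all coordinates of $\bp, \bq$ lie in $[0, t]$.

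For (i), fix $\bp, \bq$ and choose $T$ so large that all their coordinates lie in $[0, T]$. Lemma \ref{L:M-exists} gives $M_t g|_{[0, T]} \to Mg|_{[0, T]}$ uniformly as $t \to \infty$, and for each $t > T$ the truncated isometry gives $M_t g[\bp \to \bq] = g[\bp \to \bq]$. The conclusion will then follow from the continuity principle: \emph{if $f_m, f \in \sD^n_T$ and $f_m \to f$ uniformly on $[0, T]$, then $f_m[\bp \to \bq] \to f[\bp \to \bq]$.} To prove this principle, note that uniform convergence preserves left limits (since $|f_m(x^-) - f(x^-)| \le \|f_m - f\|_\infty$), so $|\pi|_{f_m} \to |\pi|_f$ for each fixed disjoint $k$-tuple $\pi$, giving $\liminf_m f_m[\bp \to \bq] \ge f[\bp \to \bq]$. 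For the $\limsup$ direction, take $f_m$-optimizers $\pi_m$, pass to a Hausdorff-convergent subsequence $\pi_m \to \pi$ using Lemma \ref{L:cty-lpp}, and combine the crude bound $\big| |\pi_m|_{f_m} - |\pi_m|_f \big| \le 2n \|f_m - f\|_\infty$ with the upper semicontinuity of $\pi \mapsto |\pi|_f$ from Lemma \ref{L:usc-lpp} to obtain $\limsup_m |\pi_m|_{f_m} \le |\pi|_f \le f[\bp \to \bq]$.

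For (ii), I apply part (i) inside the defining formula \eqref{E:Mtg-def} for $M_t$ with $f$ replaced by $Mg$. Isometry gives $Mg[(0, n)^k \to (t, 1)^k] = g[(0, n)^k \to (t, 1)^k]$ and $Mg[(x, n)^k \to (t, 1)^k] = g[(x, n)^k \to (t, 1)^k]$, so the right-hand side of \eqref{E:Mtg-def} is identical for $f = g$ and $f = Mg$. Hence $\Sig_k M_t(Mg)(x^-) = \Sig_k M_t g(x^-)$ for every $k, x, t$; taking consecutive differences in $k$ recovers the individual coordinates at $x^-$, and since a cadlag function is determined by its left limits (take limits from the right along non-jump points), this forces $M_t(Mg) = M_t g$ for every $t$. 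Sending $t \to \infty$ and applying Lemma \ref{L:M-exists} once more gives $M(Mg) = Mg$.

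The main obstacle is the continuity principle underlying (i). Upper semicontinuity of $|\pi|_f$ in $\pi$ alone is insufficient because both the environment and the optimizer must be passed to their limits simultaneously; the decoupling comes from the uniform bound $\big| |\pi|_{f_m} - |\pi|_f \big| \le 2n \|f_m - f\|_\infty$, which holds because a disjoint $k$-tuple picks up at most $2n$ function-values from $f_m$ via the formula $|\pi|_f = \sum_i f_i(t_{i-1}) - f_i(t_i^-)$. With this in hand, (ii) reduces to formal bookkeeping.
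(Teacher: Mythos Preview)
Your proof is correct and follows essentially the same route as the paper: establish that $M_t$ is an isometry via $R_t W_t R_t$ and Proposition \ref{P:W=w}(i), pass to the limit using continuity of last passage values under uniform convergence, and then deduce idempotence from the fact that $M$ is defined purely through last passage values from line $n$ to line $1$. The paper compresses all of this into three sentences, so your expansion of the continuity principle and the left-limit recovery step simply makes explicit what the paper asserts. One minor slip: the crude bound should read $2nk\|f_m - f\|_\infty$ rather than $2n\|f_m - f\|_\infty$, since a disjoint $k$-tuple can contribute up to $k$ intervals per line; this does not affect the argument.
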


\begin{proof}
Part (i) for $M_t$ is immediate from the definition of $M_t = R_t W_t R_t$ and the corresponding property of $W_t$, Proposition \ref{P:W=w}(i). This property is closed under the limit in \eqref{E:Mf-formula} since last passage values are continuous in the uniform norm. Part (ii) follows from part (i), since $M$ is defined in terms of last passage values from line $n$ to line $1$.
\end{proof}



%

To define the image of $M$, we need the following notion.
First, we say that a $k$-tuple $\pi = (\pi_1, \dots, \pi_k)$ is a disjoint $k$-tuple from $(0, n)^k$ to $(\infty, 1)^k$ if $\pi$ is the jump-time limit of disjoint $k$-tuples $\pi^\ell$ from $(0, n)^k$ to $(\ell, 1)^k$ for some sequence $\ell \to \infty$.  Jump-time limit means the jump times \eqref{E:jumptimes} of each individual path of the $k$-tuple $\pi^\ell$ converge to the jump times of each individual path in $\pi$. The point $+\infty$ is considered a valid limit; in this case the limiting path will not intersect the top line. 

Notions of left and right naturally extend to these $k$-tuples of semi-infinite paths.
We say that $\pi$ is an \textbf{(infinite) quasi-optimizer} for $g$ if the $\pi^\ell$ can be chosen so that
\begin{equation}
\label{E:ell-to-infty}
\lim_{\ell \to \infty} |\pi^\ell|_g - g[(0, n)^k \to (\ell, n)^k] = 0,
\end{equation}
and $\pi$ is an \textbf{(infinite) optimizer} if the $\pi^\ell$ can be chosen so that $|\pi^\ell|_g = g[(0, n)^k \to (\ell, n)^k]$ for all large enough $\ell$. For every $k \in \{1, \dots, n\}$, by taking a subsequential limit of a sequence of optimizers from $(0, n)^k \to (\ell, n)^k$, there is at least one optimizer from $(0, n)^k$ to $(\infty, 1)^k$.


Let $\sD^n_\downarrow\subset \sD^n$ be the space where the \textbf{rightmost} $k$-tuple from $(0, n)^k$ to $(\infty, 1)^k$ is a \textbf{quasi-optimizer} for all $k$. Similarly, let $\sD^n_{\downarrow -}$ be the space where the rightmost $k$-tuple from $(0, n)^k$ to $(\infty, 1)^k$ is an \textbf{optimizer}
for all $k$. 

The image $\sD^n_\uparrow$ of $W$ can be thought of as the space where the \textbf{leftmost} $k$-tuple from $(0, n)^k$ to $(\infty, 1)^k$ is an \textbf{optimizer} for all $k$. The analogue for $M$ is given by the following two lemmas.

\begin{lemma}
	\label{L:M-properties-3}
$M(\sD^n) = \sD^n_\downarrow$.
\end{lemma}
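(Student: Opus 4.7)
The plan is to prove both inclusions separately, leveraging the idempotence and isometry of $M$ from Lemma \ref{L:M-properties-1} together with the monotonicity structure from the proof of Lemma \ref{L:M-exists}.

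\textbf{The inclusion $\sD^n_\downarrow \subseteq M(\sD^n)$.} By the idempotence $M^2 = M$ (Lemma \ref{L:M-properties-1}(ii)), it is enough to show that $f \in \sD^n_\downarrow$ implies $Mf = f$. From the proof of Lemma \ref{L:M-exists} we have
\[
\Sig_k Mf(x^-) = \lim_{t \to \infty} \bigl(f[(0,n)^k \to (t,1)^k] - f[(x,n)^k \to (t,1)^k]\bigr) \ge \Sig_k f(x^-),
\]
so only the reverse bound needs work. The hypothesis $f \in \sD^n_\downarrow$ supplies finite $k$-tuples $\pi^\ell$ from $(0,n)^k$ to $(\ell,1)^k$ whose jump times converge to those of the $k$ bottom constant paths and which satisfy $|\pi^\ell|_f - f[(0,n)^k \to (\ell,1)^k] \to 0$. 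The crucial geometric point is that $\cup \pi^\ell$ is essentially $[0,\ell] \X \{n-k+1,\ldots,n\} \cup \{\ell\} \X \{1,\ldots,n-k\}$, so removing the initial segment over $[0,x)$ yields a valid disjoint $k$-tuple from $(x,n)^k$ to $(\ell,1)^k$ of $f$-length exactly $|\pi^\ell|_f - \Sig_k f(x^-)$. This gives
\[
f[(x,n)^k \to (\ell,1)^k] \ge f[(0,n)^k \to (\ell,1)^k] - \Sig_k f(x^-) + o(1),
\]
hence $\Sig_k Mf(x^-) \le \Sig_k f(x^-)$. Taking successive differences $\Sig_k - \Sig_{k-1}$ then gives $Mf_j = f_j$ for each $j$.

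\textbf{The inclusion $M(\sD^n) \subseteq \sD^n_\downarrow$.} Fix $f \in \sD^n$ and set $g = Mf$. The plan is to verify quasi-optimality via the canonical rightmost geometric finite $k$-tuples $\pi^t$ in which path $i$ stays on main line $n-k+i$ throughout $[0,t]$ with vertical transitions concentrated at $0$ and $t$; by construction these converge in jump times to the $k$ bottom constant paths as $t \to \infty$. By reflecting the image characterization of $W_t$ (Proposition \ref{P:W=w}(iii)) through $R_t$, the finite map $M_t$ sends $\sD^n_t$ into the set of functions for which this $\pi^t$ is an actual optimizer, so by isometry $|\pi^t|_{M_t f} = M_t f[(0,n)^k \to (t,1)^k] = f[(0,n)^k \to (t,1)^k]$. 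Direct computation gives $|\pi^t|_g = \Sig_k g(t) + \sum_{j=1}^{n-k}\bigl(g_j(t) - g_j(t^-)\bigr)$ and an analogous identity for $M_t f$. Since $|\pi^t|_g \le g[(0,n)^k \to (t,1)^k] = f[(0,n)^k \to (t,1)^k] = |\pi^t|_{M_t f}$ always, quasi-optimality reduces to bounding $|\pi^t|_{M_t f} - |\pi^t|_g$ by $o(1)$ as $t \to \infty$.

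\textbf{Main obstacle.} The hardest point is the last step of the forward inclusion: controlling the discrepancy $|\pi^t|_{M_t f} - |\pi^t|_g$ at the moving endpoint $t$, given that the convergence $M_t f \to g$ is only uniform on compacts. I would handle this by exploiting the monotonicity from the proof of Lemma \ref{L:M-exists}: the function $x \mapsto \Sig_k M_t f(x^-) - \Sig_k g(x^-)$ is nonnegative, nondecreasing in $x$, and vanishes at $x=0$, while $\Sig_k M_t f(x^-) \cvgdown \Sig_k g(x^-)$ pointwise for every fixed $x$. A subsidiary step is the shifted-time trick from the reverse inclusion: replacing the full canonical $\pi^t$ with its restriction to $[s,t]$ (for $s \to \infty$ slowly) localizes the comparison to an interval on which uniform convergence of $M_t f \to g$ does apply, and the quadrangle inequality controls the small-scale correction near the endpoint. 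Combining these ingredients should produce the required $o(1)$ bound, completing the proof.
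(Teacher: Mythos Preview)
Your reverse inclusion $\sD^n_\downarrow \subseteq M(\sD^n)$ is essentially the paper's argument and is correct: once you note that for any fixed $x$ the sequence $\pi^\ell$ eventually agrees with the bottom $k$ paths on $[0,x]$, truncation gives the bound you state, and passing from the subsequence $\ell$ to the full limit in $t$ is legitimate because the limit defining $M$ exists (Lemma \ref{L:M-exists}). Your description of $\cup\pi^\ell$ as ``essentially'' the canonical set is imprecise, but the argument only needs agreement on $[0,x]$, which you do use.

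The forward inclusion has a real gap. Your plan is to show that the \emph{canonical} $\pi^t$ (bottom $k$ lines, vertical at $t$) is itself a quasi-optimizer for $g=Mf$, which reduces to proving $h_t(t)\to 0$ where $h_t(x)=\Sig_k M_t g(x^-)-\Sig_k g(x^-)$. The monotonicity you cite gives that $h_t$ is nonnegative, nondecreasing, vanishes at $0$, and $h_t(x)\to 0$ pointwise; but these properties alone do \emph{not} force $h_t(t)\to 0$ (consider $h_t(x)=(x/t)\wedge 1$). So you need more than the abstract monotonicity, and the ``shifted-time trick'' you sketch---restricting $\pi^t$ to $[s,t]$---does not help as written, since the restricted tuple must still be completed on $[0,s]$, and if you complete it with the bottom paths you are back to $\pi^t$.

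The paper sidesteps this entirely. Rather than proving the canonical $\pi^t$ is quasi-optimal, it uses idempotence to reduce to showing that $Mg=g$ implies $g\in\sD^n_\downarrow$, and then builds the quasi-optimizer sequence by concatenating the bottom $k$ paths on $[0,x_\ell)$ with a genuine optimizer from $(x_\ell,n)^k$ to $(\ell,1)^k$, for a diagonally chosen $x_\ell\to\infty$. The point is that the definition of $\sD^n_\downarrow$ only asks for \emph{some} sequence converging to the rightmost tuple, not the canonical one; allowing the tuple to be optimal on $[x_\ell,\ell]$ is exactly what avoids having to control anything at the moving endpoint. If you make your shifted-time idea precise, you will end up at this construction.
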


\begin{proof}
Since $M$ idempotent by Lemma \ref{L:M-properties-1}(ii), $g$ is in its image if and only if $Mg = g$. We use the notation $\Sig_k g = g_{n-k+1} + \dots + g_n$. By \eqref{E:Mtg-def}, $Mg = g$ if and only if
\begin{equation}
\label{E:f0x}
\lim_{t \to \infty} g[(0,n)^k\to(t,1)^k] - g[(x,n)^k\to (t,1)^k] - \Sig_k g(x^-) = 0
\end{equation}
for every $x, k$. We now prove that these $g$ are exactly $\sD^n_{\downarrow}$ by proving both inclusions.

\medskip

{\noindent \bf Only if: $Mg=g \Rightarrow g\in \sD^n_{\downarrow}$.} If \eqref{E:f0x} holds for every $x, k$, then by a diagonalization argument we can find $x_\ell \to \infty$, $x_\ell\le \ell$ with $ \ell \in \N$ such that
\begin{equation}
\label{E:f0xx}
\lim_{\ell \to \infty} g[(0,n)^k\to(\ell,1)^k] - g[(x_\ell,n)^k\to (\ell,1)^k] - \Sig_k g(x_\ell^-) = 0
\end{equation}
for every $k$. For $\ell \in \N$, let $\pi^\ell$ be the concatenation of the bottom $k$ paths on the interval $[0, x_\ell)$ with an optimizer from
$(x_\ell,n)^k\to (\ell,1)^k$. We have $|\pi|_g = g[(x_\ell,n)^k\to (\ell,1)^k] + \Sig_k g(x_\ell^-)$, so by \eqref{E:f0xx}, the equation \eqref{E:ell-to-infty} is satisfied for this sequence $\pi^\ell$. Moreover, since $x_\ell \to \infty$ with $\ell$ and $\pi^\ell$ just follows by the bottom $k$ paths up time $x_\ell$, it converges to the rightmost $k$-tuple $\pi_k$ from $(0, n)^k$ to $(\infty, 1)^k$. Therefore $\pi_k$ is a quasi-optimizer, so $g\in \sD^n_\downarrow$.

\medskip

{\noindent \bf If: $ g\in \sD^n_{\downarrow} \Rightarrow Mg=g $.} If the rightmost $k$-tuple $\pi_k$ from $(0, n)^k$ to $(\infty, 1)^k$ is a quasi-optimizer, then, by the definition of $\sD^n_{\downarrow}$, there is a sequence $\ell \to \infty$ and a sequence of disjoint $k$-tuples $\pi^\ell$ from $(0, n)^k$ to $(\ell, n)^k$ that converge to $\pi_k$ and satisfy \eqref{E:ell-to-infty}. Since $\pi^\ell \to \pi_k$, the optimizer $\pi^\ell$ uses the bottom $k$ paths up to some time $x_\ell \to \infty.$ This implies \eqref{E:f0x} for any fixed $x$ as long as we take the limit only over the sequence $\ell$, rather than over all $t \in [0, \infty)$. We can pass to the full limit in $t$ since we know this limit exists by Lemma \ref{L:M-exists}.
\end{proof}

\begin{lemma}
	\label{L:M-properties-2}
$\sD^n_\downarrow = \overline{\sD^n_{\downarrow -}}$, where the closure is taken in the uniform norm.
\end{lemma}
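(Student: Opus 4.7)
The inclusion $\sD^n_{\downarrow-}\subseteq\sD^n_\downarrow$ is immediate from the definitions, since the sequence $\pi^\ell$ in the definition of an infinite optimizer trivially satisfies the quasi-optimizer condition \eqref{E:ell-to-infty}. Hence the work is to prove (a) $\sD^n_\downarrow$ is closed under uniform convergence, and (b) every $f\in\sD^n_\downarrow$ lies in the uniform closure of $\sD^n_{\downarrow-}$.

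For (a), the plan is to use the characterization $\sD^n_\downarrow=\{g:Mg=g\}$ from Lemma \ref{L:M-properties-3}. Given $g_m\in\sD^n_\downarrow$ with $g_m\to g$ uniformly, I want $Mg=g$. From the defining formula \eqref{E:Mtg-def}, $\Sigma_k M_s h(x^-)$ is a difference of two multi-point last passage values, each a sup over path lengths consisting of at most $nk$ increments of $h$; this yields a Lipschitz estimate
\[
|\Sigma_k M_s h(x^-)-\Sigma_k M_s h'(x^-)|\le 4nk\,\|h-h'\|_\infty
\]
whose constant does not depend on $s$. Setting $a_{s,m}:=\Sigma_k M_s g_m(x^-)$, Lemma \ref{L:M-exists} (applied to each $g_m$, using $Mg_m=g_m$) yields $\lim_s a_{s,m}=\Sigma_k g_m(x^-)$ for each $m$, and the uniform Lipschitz bound gives $\lim_m a_{s,m}=\Sigma_k M_s g(x^-)$ uniformly in $s$. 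The Moore--Osgood double-limit theorem then commutes the iterated limits, yielding $\Sigma_k Mg(x^-)=\lim_m\Sigma_k g_m(x^-)=\Sigma_k g(x^-)$ for every $x$ and $k$; inverting the partial sums $\Sigma_k$ gives $Mg=g$.

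For (b), given $f\in\sD^n_\downarrow$, define $f^{(s)}\in\sD^n$ by setting $f^{(s)}|_{[0,s]}=M_s f$ and extending each line as a constant on $[s,\infty)$. Since $Mf=f$, Lemma \ref{L:M-exists} gives $M_s f\to f$ uniformly on compacts, and hence $f^{(s)}\to f$ uniformly on compacts. To verify $f^{(s)}\in\sD^n_{\downarrow-}$, I use that $M_s f=R_s(W_s R_s f)$ with $W_s R_s f\in(\sD^n_s)_\uparrow$. By Remark \ref{R:Pitman-ordered}, the leftmost $k$-tuple from $(0,n)^k$ to $(s,1)^k$ in $W_s R_s f$ runs along the top $k$ lines; under the rotation $R_s$ this becomes the rightmost $k$-tuple from $(0,n)^k$ to $(s,1)^k$ in $M_s f$, which runs along the bottom $k$ lines $\{n-k+1,\dots,n\}$ on $[0,s]$ and attains the last passage value. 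Since $f^{(s)}$ is constant beyond $s$, for any $\ell\ge s$ the value $f^{(s)}[(0,n)^k\to(\ell,1)^k]$ is unchanged, and the rightmost $k$-tuple at time $\ell$ can be chosen to continue along those bottom $k$ lines throughout $[0,\ell]$ (deferring the required transitions to line $1$ to time $\ell$). These are genuine finite-time optimizers, and their jump-time limit as $\ell\to\infty$ is the rightmost semi-infinite $k$-tuple in $f^{(s)}$, making it an actual infinite optimizer.

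The main obstacle is the geometric step in (b): correctly unwinding the rotation to see that leftmost-top-$k$-lines optimizers in $W_s R_s f$ become rightmost-bottom-$k$-lines optimizers in $M_s f$, and then checking that extending each line constantly past time $s$ preserves this structure while turning the approximating sequence $\pi^\ell$ into honest finite-time optimizers converging to the rightmost semi-infinite tuple.
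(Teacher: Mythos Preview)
Your part (a) is correct and gives a clean alternative to the paper's argument. The paper proves $\overline{\sD^n_{\downarrow-}}\subset\sD^n_\downarrow$ by directly diagonalizing optimizers: given $g_m\to g$ uniformly with $g_m\in\sD^n_{\downarrow-}$, it extracts a sequence of optimizers $\pi_m^{\ell_m}$ in $g_m$ with $\ell_m\to\infty$ that converge to the rightmost infinite $k$-tuple and satisfy $|\pi_m^{\ell_m}|_{g_m}-|\pi_m^{\ell_m}|_g\to 0$, which makes the rightmost $k$-tuple a quasi-optimizer for $g$. Your route via the fixed-point characterization $\sD^n_\downarrow=\{g:Mg=g\}$ and a uniform-in-$s$ Lipschitz bound on $\Sigma_k M_s$ is arguably more robust, since it separates the analytic continuity of $M$ from the geometry of optimizers.

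Part (b), however, has a genuine gap. The lemma asserts closure in the \emph{uniform} norm on $[0,\infty)$, not in the uniform-on-compacts topology, and your approximants $f^{(s)}$ do not converge uniformly to $f$: you freeze each line at its time-$s$ value, so for $t>s$ the error $|f^{(s)}_i(t)-f_i(t)|=|f_i(t)-(M_s f)_i(s)|$ is typically unbounded as $t\to\infty$ (think of $f$ consisting of lines with nonzero drift). Lemma \ref{L:M-exists} only gives $M_s f\to f$ uniformly on compacts, which is exactly what you quote, but that is strictly weaker than what is needed. The paper's approximation avoids this by perturbing with a \emph{bounded} function: for $g\in\sD^n_\downarrow$ it sets $g^\ep_i(t)=g_i(t)+\ep\,i\,\arctan(t)$, so $\|g^\ep-g\|_\infty\le \ep n\pi/2\to 0$. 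The extra weight $\ep\,i\,\arctan(t)$ strictly favors lower lines, which forces genuine optimizers (not just quasi-optimizers) in $g^\ep$ from $(0,n)^k$ to $(\ell,1)^k$ to converge to the rightmost infinite $k$-tuple, so $g^\ep\in\sD^n_{\downarrow-}$. Your geometric idea of producing an element whose rightmost optimizers hug the bottom lines is sound; what is missing is an approximation scheme that stays uniformly close to $f$ on all of $[0,\infty)$.
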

\begin{proof}
{\noindent \bf Inclusion $\sD^n_{\downarrow} \subset \overline{\sD^n_{\downarrow -}}$.} Let $g \in \sD^n_\downarrow$, and for $\ep > 0$, define $g^\ep_i(t) = g_i(t) + \ep i \arctan (t)$, so that $g^\ep \to g$ uniformly as $\ep \to 0$. This approximation $g^\ep$ adds more weight onto lower lines $n, n-1, \dots$, encouraging optimizers to use these lines. Specifically, if a path $\tau$ from $p$ to $q$ is to the left of a path $\pi$ from $p$ to $q$ then \begin{equation} \label{eq:epsilon_environment_property}
    \tau|_{g^\epsilon} - |\tau|_g < |\pi|_{g^\epsilon} - |\pi|_g.
\end{equation}We claim that $g^\ep \in \sD^n_{\downarrow -}$ for all $\ep > 0$. 

Fix $k$ and let $\pi^\ell$ be the sequence in \eqref{E:ell-to-infty} for $g$ converging to the rightmost $k$-tuple $\pi_k$ from $(0, n)^k$ to $(\infty, 1)^k$. Let $\tau^\ell$ be any sequence of optimizers in $g^\ep$ from $(0, n)^k$ to $(\ell, n)^k$. Suppose, for the sake of contradiction, that $\tau^\ell \not\to \pi_k$, and by choosing a subsequence if necessary, suppose $\tau^\ell \to \tau \ne \pi_k$. Since paths in $\tau$ are to the left of paths in $\pi_k$, \eqref{eq:epsilon_environment_property} guarantees that there exists $\de > 0$ such that for all large enough $\ell$, we have
$$
\de + |\tau^\ell|_{g^\ep} - |\tau^\ell|_{g} < |\pi^\ell|_{g^\ep} - |\pi^\ell|_{g}.
$$
On the other hand, the liminf of $|\pi^\ell|_{g} - |\tau^\ell|_{g}$ is at least $0$ by the quasi-optimality of $\pi^\ell$, implying that $|\pi^\ell|_{g^\ep} > |\tau^\ell|_{g^\ep} + \de$ for large enough $\ell$, contradicting the optimality of $\tau^\ell$. Hence $\tau^\ell \to \pi_k$, and so $g^\ep \in \sD^n_{\downarrow -}$.

\medskip

{\noindent \bf Inclusion $\sD^n_{\downarrow} \supset \overline{\sD^n_{\downarrow -}}$.} As in the previous argument, let $\pi_k$ be the rightmost $k$-tuple from $(0,n)^k$ to $(\infty,1)^k$. Suppose $g\in \overline{\sD^n_{\downarrow -}}$, and suppose $g_m \in D^n_{\downarrow -}$ converges uniformly to $g$. Fix $k \in \{1, \dots, n\}$, and for each $m$, let $\pi^\ell_m$ be a sequence of disjoint optimizers in $g_m$ from $(0, n)^k$ to $(\ell, 1)^k, \ell \in \N$ that converges along a subsequence to $\pi_k$. By a diagonalization argument, we can find a sequence $\ell_m \to \infty$ such that $\pi^{\ell_m}_m \to \pi_k$ as $m \to \infty$ and $|\pi^{\ell_m}_m|_{g_m} - |\pi^{\ell_m}_m|_{g} \to 0$. The sequence $\pi^{\ell_m}_m$ is quasi-optimal for $g$, so $g \in \sD^n_{\downarrow}.$
\end{proof}

Later, we will also use the following closely related observation.
\begin{corollary}
	\label{C:Mf0t}
	Let $\sD^n_{*t}$ be the set of $f \in \sD^n$ such that for every $k$ there is a quasi-optimizer $\tau_k$ from $(0, n)^k \to (\infty, 1)^k$ that agrees with the rightmost $k$-tuple from $(0, n)^k \to (\infty, 1)^k$ up to time $t$. For $f \in \sD^n_{*t}$, we have
	$
	Mf_{[0, t]} = f|_{[0, t]}.
	$
\end{corollary}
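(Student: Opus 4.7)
The plan is to establish, for every $k \in \{1,\dots, n\}$ and every $x \in (0, t]$, the identity
\[
\lim_{\ell \to \infty} \bigl(f[(0,n)^k \to (\ell, 1)^k] - f[(x, n)^k \to (\ell, 1)^k]\bigr) = \Sig_k f(x^-),
\]
where $\Sig_k g := g_{n-k+1} + \dots + g_n$. By \eqref{E:Mtg-def} together with Lemma \ref{L:M-exists}, the left-hand side equals $\Sig_k Mf(x^-)$; telescoping in $k$ then gives $Mf_j(x^-) = f_j(x^-)$ for every $j$ and every $x \in (0, t]$, and right-continuity of the cadlag functions $Mf$ and $f$ forces $Mf|_{[0,t]} = f|_{[0,t]}$. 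The key geometric input is that the rightmost semi-infinite $k$-tuple from $(0,n)^k$ to $(\infty,1)^k$ is precisely the bundle of horizontal rays $\bigcup_{i=1}^k [0,\infty)\times\{n-k+i\}$, since pushing the endpoint to infinity lets us delay every upward jump indefinitely. Consequently $f \in \sD^n_{*t}$ furnishes a quasi-optimizer $\tau_k$ coinciding with these bottom $k$ rays on $[0,t]$, together with a sequence $\pi^\ell$ of disjoint $k$-tuples from $(0,n)^k$ to $(\ell,1)^k$ converging to $\tau_k$ in jump-time and satisfying $|\pi^\ell|_f - f[(0,n)^k\to(\ell,1)^k] \to 0$.

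For the upper bound, fix $x \in (0, t]$. For $\ell$ sufficiently large, jump-time convergence forces each path of $\pi^\ell$ onto its designated bottom line throughout $[0, x)$, so $(\cup\pi^\ell)\cap[0,x)\times\{1,\dots,n\} = [0,x)\times\{n-k+1,\dots,n\}$, which has $df$-measure $\Sig_k f(x^-)$. Additivity of $df$ then writes $|\pi^\ell|_f$ as $\Sig_k f(x^-)$ plus the $f$-length of the restriction of $\pi^\ell$ to $[x, \ell]$, which is a disjoint $k$-tuple from $\bigl((x, n-k+i)\bigr)_{i=1}^k$ to $(\ell,1)^k$. Adjoining to path $i$ the singletons $\{x\}\times\{n-k+i+1,\dots,n\}$ produces a disjoint $k$-tuple from $(x,n)^k$ to $(\ell,1)^k$ with the same union, since those singletons are already the starting points of paths $i+1, \dots, k$. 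Hence the second term is bounded by $f[(x,n)^k\to(\ell,1)^k]$, and combining with quasi-optimality of $\pi^\ell$ yields the $\le$ direction of the identity. For the lower bound, any disjoint $k$-tuple $\sigma$ from $(x,n)^k$ to $(\ell,1)^k$ is converted into a disjoint $k$-tuple $\tilde\sigma$ from $(0,n)^k$ to $(\ell,1)^k$ by moving each path's instantaneous jumps from time $x$ to time $0$ and prolonging the horizontal segment on line $n-k+i$ back to time $0$. The net change to the union is the addition of $[0,x)\times\{n-k+1,\dots,n\}$, so $|\tilde\sigma|_f = |\sigma|_f + \Sig_k f(x^-)$; supremizing over $\sigma$ completes the proof of the identity.

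The main obstacle will be the bookkeeping at the vertical slice $\{x\}\times\{1,\dots,n\}$: one must verify that each augmentation yields a valid disjoint $k$-tuple and that the added singletons on a given column are already present in the union as starting points of adjacent paths, so that no spurious $\Delta f_j(x)$ corrections appear. The endpoint $x = t$ is handled by letting $x \to t^-$ in the identity and using the left-continuity of $x \mapsto f(x^-)$ and $x \mapsto Mf(x^-)$.
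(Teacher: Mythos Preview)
Your argument is correct and follows the same route as the paper. The paper's proof is a one-line reference back to the `if' direction of Lemma~\ref{L:M-properties-3}, which establishes \eqref{E:f0x} from the existence of a quasi-optimizer following the bottom $k$ lines; you have simply unpacked that argument in full, supplying the lower bound (which is exactly \eqref{E:SigkM} from Lemma~\ref{L:M-exists}) and the upper bound via the restriction of the approximating $k$-tuples $\pi^\ell$ to $[x,\ell]$. One small remark: your worry about the vertical slice at time $0$ is unnecessary, since essential disjointness of $\pi^\ell$ together with the fact that path $k$ stays on line $n$ for positive time (by jump-time convergence) forces path $i$ to jump instantly to line $n-k+i$ at time $0$, so the decomposition $(\cup\pi^\ell)\cap[0,x)=[0,x)\times\{n-k+1,\dots,n\}$ holds exactly.
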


\begin{proof}
Just as in the `if' part of the proof of Lemma \ref{L:M-properties-1}, the existence of such a quasi-optimizer $\tau_k$ implies \eqref{E:f0x} for $x \le t$, giving the result.	
\end{proof}
 
 The fact that $M$ and $W$ are inverses follows from an abstract lemma.
 
 \begin{lemma}
 \label{L:idempotent-inverse}
 Let $A, B:X \to X$ be two idempotent maps satisfying $AB = A$ and $BA = B$. Then $A|_{B(X)}$ is a bijection between $B(X)$ and $A(X)$ with inverse $B|_{A(X)}$.
 \end{lemma}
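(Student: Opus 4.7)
The plan is to verify directly that $B|_{A(X)}$ and $A|_{B(X)}$ are two-sided inverses. This is a purely formal consequence of the two composition identities together with idempotence, so no analytic input from the earlier sections is needed.

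First I would record the key observation that idempotence of $A$ and $B$ means each map fixes its own image pointwise: if $y \in A(X)$ then $y = A(x)$ for some $x$, so $A(y) = A(A(x)) = A(x) = y$, and symmetrically $B(y) = y$ whenever $y \in B(X)$. In particular $A|_{B(X)}$ does land in $A(X)$ (trivially) and $B|_{A(X)}$ lands in $B(X)$, so the composed maps $A \circ B : A(X) \to A(X)$ and $B \circ A : B(X) \to B(X)$ both make sense.

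Next I would compute these compositions using the two given identities. For $y \in A(X)$, idempotence gives $A(y) = y$, hence
\[
A(B(y)) = (AB)(y) = A(y) = y.
\]
For $y \in B(X)$, idempotence gives $B(y) = y$, hence
\[
B(A(y)) = (BA)(y) = B(y) = y.
\]
Together these equalities say that $B|_{A(X)}$ is both a left and a right inverse for $A|_{B(X)}$, so $A|_{B(X)} : B(X) \to A(X)$ is a bijection with inverse $B|_{A(X)}$, as claimed.

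There is no real obstacle here; the only thing to be careful about is not to confuse which identity $AB = A$ or $BA = B$ is needed in which direction. Once the observation that each idempotent fixes its own image is in hand, the argument is two lines. In the intended application, $A = W$ and $B = M$ (or vice versa), and the hypotheses $AB = A$, $BA = B$ will come from the isometry statements in Proposition~\ref{P:W=w}(i) and Lemma~\ref{L:M-properties-1}(i) together with the characterization of $W$ and $M$ as the unique minimal (respectively maximal) elements in each isometry class, which pins down $WM = W$ and $MW = M$.
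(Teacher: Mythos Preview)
Your proof is correct and follows essentially the same route as the paper: use idempotence to see that each map is the identity on its own image, then read off that $AB$ is the identity on $A(X)$ and $BA$ is the identity on $B(X)$. One small remark on your closing paragraph: in the paper the identities $WM=W$ and $MW=M$ are obtained directly from the isometry property and the fact that both maps are defined via last passage values from line $n$ to line $1$, not via the minimal/maximal element characterization.
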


\begin{proof}
Since $B$ is idempotent, $B$ is the identity on $B(X)$. Therefore $BA = B$ is also the identity on $B(X)$. Similarly, $AB$ is the identity on $A(X)$. Finally, $A(X) = AB(X)$ and $B(X) = BA(X)$, yielding the result.
\end{proof}

\begin{prop}
	\label{P:MW-relations}
We have $MW = M$ and $WM = W$. Moreover, the restriction $W|_{\sD^n_\downarrow}$ is a bijection between $\sD^n_\downarrow$ and  $\sD^n_\uparrow$ with inverse $M|_{\sD^n_\uparrow}$.
\end{prop}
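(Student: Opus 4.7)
The plan is to invoke the abstract Lemma \ref{L:idempotent-inverse} with $X = \sD^n$, $A = W$, and $B = M$. The idempotence hypotheses $A^2 = A$ and $B^2 = B$ are already established (Proposition \ref{P:W=w}(ii) and Lemma \ref{L:M-properties-1}(ii)), and the images $A(X) = \sD^n_\uparrow$ and $B(X) = \sD^n_\downarrow$ have been identified (Proposition \ref{P:W=w}(iii) and Lemma \ref{L:M-properties-3}). So the entire proof reduces to the two compatibility relations $WM = W$ and $MW = M$.

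The key observation I will use is that both $W$ and $M$ depend on $f$ only through its isometry class. For $W$ this is immediate from the defining formula \eqref{E:melon-def-body}, which involves only the top-to-bottom last passage values $f[(0,n)^k \to (y,1)^k]$. Applying this to the pair $(f, Mf)$, which is isometric by Lemma \ref{L:M-properties-1}(i), yields $WMf = Wf$ directly from \eqref{E:melon-def-body}.

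For $M$ I will work first with the truncated map $M_t$. The identity \eqref{E:Mtg-def} expresses $\sum_{i=n-k+1}^n M_tf_i(x^-)$ purely in terms of values of the form $f[(\bx,n)^k\to(t,1)^k]$, so if $f \sim g$ then $M_t f = M_t g$ for every $t \ge 0$; letting $t \to \infty$ and using Lemma \ref{L:M-exists} gives $Mf = Mg$. Taking $(f, Wf)$ as the isometric pair, using $Wf \sim f$ from Proposition \ref{P:W=w}(i), yields $MWf = Mf$.

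With both identities in hand, Lemma \ref{L:idempotent-inverse} applies and gives exactly that $W|_{\sD^n_\downarrow}$ is a bijection onto $\sD^n_\uparrow$ with inverse $M|_{\sD^n_\uparrow}$. There is essentially no obstacle: the whole argument is organized around the principle that $W$ and $M$ both distill the same invariant information---the top-to-bottom last passage values---so each map is constant on isometry classes, and therefore collapses the other's image onto itself.
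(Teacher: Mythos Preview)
Your proof is correct and follows essentially the same approach as the paper: establish $WM=W$ and $MW=M$ via the fact that $W$ and $M$ are defined entirely through line-$n$-to-line-$1$ last passage values (hence are constant on isometry classes), and then invoke Lemma~\ref{L:idempotent-inverse}. The paper states the first step in a single sentence, while you spell out the details for $M$ by passing through $M_t$ and \eqref{E:Mtg-def}; this is a faithful elaboration rather than a different argument.
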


\begin{proof}
The first sentence is immediate from the isometric properties of $W$ and $M$ and the fact that both maps are defined in terms of last passage from line $n$ to line $1$. The second sentence follows from Lemma \ref{L:idempotent-inverse}.
\end{proof}

It may seem that elements of $\sD^n_{\downarrow}$ in $\sD^n$ should be somewhat rare and special. Surprisingly, this is not the case! The following proposition gives a natural condition for an element of $\sD^n$ to be in $\sD^n_\downarrow$ (in fact, in $\sD^n_{\downarrow -}$).

\begin{prop}
\label{P:sourness}
Let $f \in \sD^n$, and suppose that
\begin{equation}
\label{E:limsup-infty}
\limsup_{t \to \infty} f_{j+1}(t) - f_j(t) = \infty
\end{equation}
for $j = 1, \dots, n-1$. Then $f\in \sD^n_{\downarrow -}$.
\end{prop}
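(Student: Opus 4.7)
To prove $f \in \sD^n_{\downarrow-}$, fix $k \in \{1,\dots,n\}$ and let $\rho^\ell$ be the rightmost optimizer from $(0,n)^k$ to $(\ell,1)^k$, which exists by Lemma \ref{L:leftmost-exist}. For $k=n$ the unique disjoint $n$-tuple is the sought limit itself and there is nothing to prove, so assume $k < n$. I claim $\rho^\ell$ converges in jump-time topology, as $\ell \to \infty$, to the rightmost semi-infinite $k$-tuple $\pi_k^\infty$ — the $k$-tuple whose $j$-th path stays on line $n-k+j$ for all of $[0,\infty)$ (with instantaneous descents from lines $n,\dots,n-k+j+1$ at time $0$). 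This directly exhibits $\pi_k^\infty$ as an infinite optimizer in the sense of the definition preceding Lemma~\ref{L:M-properties-3}, giving $f \in \sD^n_{\downarrow-}$.

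Parametrize a disjoint $k$-tuple by its jump times $\{t_{j,i}\}$, where path $j$ sits on line $i$ over $[t_{j,i}, t_{j,i-1}]$, with the boundary conventions $t_{j,n}=0, t_{j,0}=\ell$. Telescoping the length along each path (and absorbing overlap corrections at time $0$ and jump corrections at the $t_{j,i}$ into a constant $C(\ell,f)$ that does not depend on the jump times themselves) yields
\[ |\pi|_f \;=\; C(\ell, f) \;+\; \sum_{j=1}^k \sum_{i=1}^{n-1} h_i(t_{j,i}), \qquad h_i := f_{i+1} - f_i. \]
The constraints $t_{j,i+1} \le t_{j,i}$ (within-path) and $t_{j,i-1} \le t_{j+1,i}$, equivalently $t_{j,i} \le t_{j+1,i+1}$ (disjointness), make $\{t_{j,i}\}$ into a finite partial order. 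A short diagonal chase from the boundary values $t_{j',n}=0$ and $t_{j',0}=\ell$ shows $t_{j,i}$ is forced to $0$ exactly when $i \ge n-k+j$ and forced to $\ell$ exactly when $i \le j-1$; these forced values match the initial descents and final ascents of the finite-$\ell$ truncation of $\pi_k^\infty$ exactly. The remaining $k(n-k)$ variables are free and inherit a partial order whose top element is $t_{k,k}$.

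For the rightmost optimizer $\rho^\ell$, each free $t_{j,i}^*$ is the rightmost argmax of $h_i$ on its allowed interval $[L_{j,i}, U_{j,i}]$, whose endpoints are determined by the $t^*$'s further up the poset. The top variable $t_{k,k}$ has upper bound $U_{k,k} = t_{k,k-1} = \ell$, and because $\limsup_{t \to \infty} h_k(t) = \infty$ implies $\sup_{[0,T]} h_k \to \infty$ as $T \to \infty$, no finite point can persist as the rightmost argmax of $h_k$ on $[L_{k,k},\ell]$ when $\ell \to \infty$; hence $t_{k,k}^* \to \infty$. Propagating down the partial order, each subsequent free variable acquires an upper bound tending to $\infty$, and the unboundedness of its own $h_i$ forces the variable itself to diverge with $\ell$. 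Combined with the exact boundary values at the forced variables, this yields the desired jump-time convergence $\rho^\ell \to \pi_k^\infty$, completing the proof.

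The main technical obstacle is the cadlag bookkeeping: the telescoping identity and the argmax analysis must use $f_i(t^-)$ in place of $f_i(t)$ at jump locations, and one must verify that the greedy rightmost-argmax construction actually produces a valid, properly disjoint $k$-tuple at each $\ell$ rather than introducing illegal interior overlaps. Both points are handled using right-continuity of $f$ and the careful choice of rightmost argmax (breaking ties using $f_i(t^-)$ when needed), analogously to the treatment in Lemma \ref{L:wm-lem}.
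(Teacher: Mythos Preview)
Your approach is genuinely different from the paper's and, with care, can be made to work, but there are two real inaccuracies in the write-up.

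First, the telescoping identity. In the cadlag case the jump corrections at the $t_{j,i}$ \emph{do} depend on the jump times; they cannot be absorbed into a constant $C(\ell,f)$. Telescoping $|\pi_j|_f = \sum_i [f_i(t_{j,i-1}) - f_i(t_{j,i}^-)]$ actually produces $\sum_i g_i(t_{j,i})$ with $g_i(t):=f_{i+1}(t)-f_i(t^-)$, not $h_i(t)=f_{i+1}(t)-f_i(t)$. This is harmless for the final conclusion, since $g_i\ge h_i$ so $\limsup g_i=\infty$ as well, and $g_i$ (not $h_i$) is the function whose argmax actually controls two-line geodesics. But as written, your formula is false and the argmax of $h_i$ need not even exist. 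There are also overlap corrections to $df(\cup\pi)$ at the endpoints $t_{j,i}=L_{j,i}$ or $U_{j,i}$ that you have not accounted for.

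Second, your greedy description of the rightmost optimizer is not right: the lower endpoint $L_{j,i}$ is determined by the $t^*$'s \emph{below} $(j,i)$ in the poset, not above, so you cannot run a top-down greedy that fixes both endpoints. What \emph{is} true, and is all you need, is the one-sided statement that for the rightmost optimizer, $g_i(t) \le g_i(t_{j,i}^*)$ for every $t\in (t_{j,i}^*, U_{j,i})$ (else moving $t_{j,i}$ right alone gives a feasible $k$-tuple that is an optimizer and strictly to the right). Since $U_{j,i}=\min(t_{j,i-1}^*,t_{j+1,i+1}^*)$ depends only on variables above $(j,i)$, the top-down induction does go through once you phrase it this way.

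For comparison, the paper's proof is considerably shorter and avoids all of this bookkeeping. It argues by contradiction: if some infinite optimizer is not the rightmost $k$-tuple, take the maximal index $i$ where $\pi_i$ deviates; then $\pi_i$ eventually settles on some line $j$, and because no other path occupies line $j+1$ after that time, the restriction of $\pi_i$ to any box $[t,s]\times\{j,j+1\}$ must be a two-line geodesic that jumps immediately at $t$. This forces $\sup_{[t,s]}(f_{j+1}(r)-f_j(r^-))$ to be constant in $s$, contradicting $\limsup h_j=\infty$. Your approach yields the slightly sharper statement that rightmost \emph{finite} optimizers themselves converge to the rightmost infinite tuple, at the cost of the poset induction and the cadlag bookkeeping you flag but do not carry out.
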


\begin{proof}
Suppose for the sake of contradiction that $f \notin \sD^n_{\downarrow-}$. Then for some $k \in \{1, \dots, n\}$, there is an optimizer $\pi = (\pi_1, \dots, \pi_k)$ from $(0, n)^k$ to $(\infty, 1)^k$ such that $\pi_i \ne [0, \infty) \X \{n + k - i\}$ for some $i$. Let $i$ be the maximal index for which this holds, let $j < n + k - i$ be the index of the line on which $\pi_i$ contains a semi-infinite interval, and let $t \in [0, \infty)$ be the time when $\pi_i$ jumps to line $j$, so that $(t, j+1) \in \pi_i$ and $[t,  \infty) \X \{j\} \sset \pi_i$. Since $\pi_\ell = [0, \infty) \X \{n + k - \ell\}$ for $\ell < i$, we have that 
\begin{equation}
\label{E:tjempty}
(t, \infty) \X \{j+1\} \cap (\cup \pi) = \emptyset.
\end{equation}
Since $\pi$ is an optimizer, for any $s > t$, restricting each of the $\pi_i$ to a compact set $[t, s] \X \{1, \dots, n\}$ must also yield an optimizer. In particular, \eqref{E:tjempty} implies that the path $\pi_i \cap [t, s] \X \{j, j+1\}$ must always be a geodesic from $(t, j+1)$ to $(s, j)$. Therefore
\begin{equation}
\label{eq:sup_j_j1}
\sup_{r \in [t, s]} f_{j+1}(r) -f_j(r^-) = \sup_{r \in [t, s]} (f_{j+1}- f_j)(r) + [f_j(r) - f(r^-)] = f_{j+1}(t) - f_j(t^-) 
\end{equation}
for all $s > t$. Taking $s \to \infty$ in \eqref{eq:sup_j_j1} shows that the left hand side of \eqref{E:limsup-infty} equals $f_{j+1}(t) - f_j(t^-) < \infty$, a  contradiction of  \eqref{E:limsup-infty}.
\end{proof}

\begin{remark}
\label{R:sourness-of-random-walks}
Natural measures on $\sD^n$, 
such as i.i.d.\ random walks or i.i.d.\ L\'evy processes, satisfy the conditions of Proposition \ref{P:sourness} almost surely. Many of these measures, such as Brownian motion, piecewise constant geometric random walks, or piecewise linear versions of Bernoulli walks, appear in integrable models. See Section \ref{S:preservation}. 
\end{remark}

\section{The finite bijection}
\label{SS:finite-bijection}
In Section \ref{S:bijectivity}, we studied the bijectivity of the melon map in the infinite-time setting. The goal of this section is to construct a version in the finite-time setting. This version is more similar to classical RSK:  an additional Gelfand-Tsetlin pattern will play the role of the  second Young tableaux.  

Like the infinite case, it is straightforward to check that the maps $W_t$ and $M_t$ are inverses of each other on appropriate domains. However, unlike the infinite case, in the finite case, the image of $M_t$ is a rare subset of $\sD^n_t$ and so the bijection loses usefulness. 
Instead, our strategy is to take an element of $\sD^n_t$ and map it to an element of $\sD^n_{*t}$, see Corollary \ref{C:Mf0t}. This allows us to define the RSK correspondence on the finite domain $[0,t]$ in terms of the maps $W,M$ defined to the infinite domain $[0,\infty)$.


For $f \in \sD^n_t$, and a $k$-tuple of functions $g = (g_1, \dots, g_n)$ where each $g_i:[t, \infty) \to \R$ and $g(t) = f(t)$, define the \textbf{concatenation} $f \oplus g \in \sD^n$ to be equal to $f$ on $[0, t]$ and $g$ on $[t, \infty)$. Note that our notation implicitly depends on $t$. For $\al > 0$, we write $f \oplus \al$ to mean $f \oplus g_\al$, where $d g_\al$ is purely atomic on $(t, \infty) \X \{1, \dots, n\}$ with atoms of size $\al i$ at locations $(t + i, i)$. The concatenations $f \oplus \al$ are set up so that for large enough $\al$, $f \oplus \al \in \sD^n_{*t}$, see Figure \ref{fig:pathsalpha}.

Given this, one direction of the finite RSK correspondence will essentially be the map $f \mapsto W(f \oplus \al)$ for large $\al$. We can then invert this correspondence using $M$:
\begin{equation}
\label{E:MWf}
M W(f \oplus \al)|_{[0, t]} = M(f \oplus \al)|_{[0, t]} = (f \oplus \al)|_{[0, t]} = f.
\end{equation}
Here the first equality follows from Proposition \ref{P:MW-relations}, the second follows from Corollary \ref{C:Mf0t}, and the third is by definition. 

While \eqref{E:MWf} clearly hints at the existence of a bijection, there are a few more things to check. We need to figure out what information about $f$ is actually used in the map $f \mapsto W(f \oplus \al)$, identify the image of this map, and then show that if we start with data in that space, then $WM$ is also the identity on that space. The rest of this section is devoted to doing this. 
We will also present our version of the finite RSK correspondence in a way that more closely resembles the classical RSK bijection between matrices and Young tableaux. Our correspondence will be related to classical RSK and dual RSK in Section \ref{S:embedding}.

%
%
%
%
%

First, let $\sD^n_{t \uparrow} = W_t(\sD^n_t)$ be the space of Pitman ordered sequences $w_n \preceq \dots \preceq w_1$ in $\sD^n_t$. For $n \in \N$, let $\operatorname{GT}_n$ be the space of triangular arrays $g = \{g_i(j) : i \le j \in \{1, \dots, n\}\}$ satisfying the inequalities
\begin{equation}
\label{E:GT-inequalities}
g_i(j) \ge g_i(j-1), \qquad g_i(j-1) \ge g_{i+1}(j),
\end{equation}
for all $i, j$ where these quantities are defined. Such an array is called a \textbf{Gelfand-Tsetlin pattern} of depth $n$. These inequalities amount to saying that the $j$th row $g(j)$ is an ordered $i$-tuple, and that consecutive rows interlace.
%
%
Next, define the space
$$
\sG^n_t = \{ h = (w, g) \in \sD^n_{t \uparrow} \X \operatorname{GT}_n : w(t) = g(n) \}.
$$
The space $\sG^n_t$ is the analogue of the set of pairs of Young tableaux with the same shape, see Section \ref{S:embedding}.
The finite-$t$ RSK correspondence, $\operatorname{RSK}_t$, maps $\sD^n_t$ into $\sG^n_t$, where $\operatorname{RSK}_t(f) = (W_t f, G_t f)$, and for $k \le s \in \{1, \dots, n\}$ we have
\begin{equation}
\label{E:Gtf-formula}
\sum_{i=1}^k G_t f_i(s) = f[(0, n)^{k} \to (t, n - s + 1)^{k}].
\end{equation} 
\begin{lemma}
	\label{L:RSK-well-defined}
	The map $\operatorname{RSK}_t:\sD^n_t \to \sG^n_t$ is well-defined. That is, $\operatorname{RSK}_t f \in \sG^n_t$ for any $f \in \sD^n_t$.
\end{lemma}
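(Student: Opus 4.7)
The plan is to verify the three conditions defining membership in $\sG^n_t$: (a) $W_t f \in \sD^n_{t\uparrow}$; (b) $G_t f$ satisfies the inequalities \eqref{E:GT-inequalities}; and (c) $W_t f(t) = G_t f(n)$. Conditions (a) and (c) are essentially bookkeeping. Condition (a) is immediate because $\sD^n_{t\uparrow}$ is defined as the image of $W_t$. For (c), setting $s = n$ in \eqref{E:Gtf-formula} gives $\sum_{i=1}^k G_t f_i(n) = f[(0,n)^k \to (t,1)^k]$, which by \eqref{E:melon-def-body} equals $\sum_{i=1}^k W_t f_i(t)$; taking successive differences in $k$ yields $G_t f_i(n) = W_t f_i(t)$ for every $i$.

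The substantive part is (b). Writing $L_s^k = f[(0,n)^k \to (t,n-s+1)^k]$, we have $G_t f_i(s) = L_s^i - L_s^{i-1}$, and the two families of GT inequalities unfold into
\begin{align*}
G_t f_i(s) \ge G_t f_i(s-1) \quad &\iff \quad L_s^i + L_{s-1}^{i-1} \ge L_s^{i-1} + L_{s-1}^i, \\
G_t f_i(s-1) \ge G_t f_{i+1}(s) \quad &\iff \quad L_{s-1}^i + L_s^i \ge L_{s-1}^{i-1} + L_s^{i+1}.
\end{align*}
The key observation is that any disjoint $k$-tuple from $(0,n)^k$ to $(t,n-s+1)^k$ uses only the bottom $s$ lines of $f$, so if we set $f^{[s]} = (f_{n-s+1}, \ldots, f_n) \in \sD^s_t$ (reindexed $1, \ldots, s$), then $L_s^k = f^{[s]}[(0,s)^k \to (t,1)^k]$, and consequently $G_t f_i(s) = W_t f^{[s]}_i(t)$. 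Thus row $s$ of the GT pattern is the top row at time $t$ of the melon of the bottom $s$ lines of $f$, a direct analogue of the classical Schensted description of the shape of the insertion tableau after reading $s$ letters.

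With this identification in hand, the two GT inequalities become classical interlacing statements between $W_t f^{[s-1]}(t)$ and $W_t f^{[s]}(t)$: adding one line on top of a Pitman-ordered sequence weakly raises each coordinate, and the new values interlace the old. My plan is to prove them by a rearrangement argument in the spirit of Lemma \ref{L:quadrangle}. For inequality (i), take optimizers $\pi$ and $\pi'$ realizing $L_{s-1}^i$ and $L_s^{i-1}$ respectively. Working on lines $n-s+1$ and $n-s+2$ line by line, extract leftmost and rightmost paths from the union $\pi_1 \cup \cdots \cup \pi_i \cup \pi'_1 \cup \cdots \cup \pi'_{i-1}$ to produce a disjoint $i$-tuple from $(0,n)^i$ to $(t,n-s+1)^i$ and a disjoint $(i-1)$-tuple from $(0,n)^{i-1}$ to $(t,n-s+2)^{i-1}$. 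Since the signed measure $df$ restricted to the union is being repartitioned without loss, the $f$-lengths of the rearranged tuples sum to $|\pi|_f + |\pi'|_f = L_{s-1}^i + L_s^{i-1}$; bounding each summand above by the appropriate multi-point last passage value gives (i), and inequality (ii) follows by a symmetric rearrangement.

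The main obstacle is the rearrangement step. Unlike in Lemma \ref{L:quadrangle}, the tuples on the two sides of each inequality have different sizes and different endpoint multiplicities on lines $n-s+1$ and $n-s+2$, so the order-statistic construction must be executed carefully to deliver disjoint tuples with the correct source and target multisets. Once that rebalancing is arranged, the rest of the argument reduces to the routine observation that repartitioning a union of essentially disjoint paths into new essentially disjoint tuples preserves total $df$-mass.
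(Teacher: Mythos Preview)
Your proposal is correct and follows essentially the same route as the paper: dispatch (a) and (c) quickly, then for (b) rewrite the GT inequalities as quadrangle-type inequalities on the $L_s^k$ and prove them by a rearrangement of optimizers in the spirit of Lemma~\ref{L:quadrangle}. The paper's execution of the rearrangement is slightly more explicit than yours---it pairs paths by index, taking $\tau^L_j$ and $\tau^R_j$ as the leftmost and rightmost paths in $\pi_j \cup \pi'_j$ (with $\tau^L_i$ simply $\pi_i$ extended by one vertical step)---which resolves exactly the ``correct source and target multisets'' issue you flagged; your interpretation $G_t f_i(s) = W_t f^{[s]}_i(t)$ is a nice aside but is not used in either proof.
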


\begin{proof}
	By Proposition \ref{P:W=w} (iii), $W_t f \in \sD^n_{t \uparrow}$, and $W_tf(t) = G_tf(n)$ by definition. It remains to verify the Gelfand-Tsetlin inequalities \eqref{E:GT-inequalities} for $G_t f$. Both of these types of inequalities have similar proofs and follow from quadrangle inequalities that are similar in spirit to Lemma \ref{L:quadrangle}. We prove only the first inequality; the second one is similar. By \eqref{E:Gtf-formula}, the inequality $G_t f_i(s) \ge G_t f_i(s-1)$ is equivalent to the quadrangle inequality
	\begin{equation}
	\label{E:quad-interior}
	\begin{split}
&f[(0, n)^{i} \to (t,n - s + 1)^{i}] + f[(0, n)^{i-1} \to (t,n - s + 2)^{i-1}] \\
\ge \;\; &f[(0, n)^{i} \to (t,n - s + 2)^{i}] + f[(0, n)^{i-1} \to (t, n - s + 1)^{i-1}].
\end{split}
	\end{equation}
To prove \eqref{E:quad-interior}, let $\pi = (\pi_1, \dots, \pi_i), \pi' = (\pi'_1, \dots, \pi'_{i-1})$ be disjoint optimizers from $(0, n)^{i} \to (t,n - s + 2)^{i}$ and $(0, n)^{i-1} \to (t, n - s + 2)^{i-1}$, respectively. Similarly to the proof of Lemma \ref{L:quadrangle}, we will use $\pi, \pi'$ to construct a disjoint $i$-tuple $\tau^L$ from $(0, n)^{i} \to (t,n - s + 1)^{i}$ and a disjoint $(i-1)$-tuple $\tau^R$ from 
$(0, n)^{i-1} \to (t,n - s + 2)^{i-1}$. $\tau^L$ and $\tau^R$ will be constructed by dividing up $\pi$ and $\pi^\prime$ in such a way that 
\begin{equation}\label{eq:tau_pi_party}
|\pi|_f + |\pi'|_f = |\tau^L|_f + |\tau^R|_f.
\end{equation}
For $j \le i-1$, we let $\tau^L_{j}$ be the leftmost path from $(0, n)$ to $(t,n - s + 1)$ contained in the union $\pi_j \cup \pi_j'$, and let $\tau^L_i = \pi_i \cup \{(t, n - s + 1)\}$. Also, for $j \le i-1$ let $\tau^R_j$ be the rightmost path from $(0, n)$ to $(t,n - s + 2)$ contained in the union $\pi_j \cup \pi_j'$. With this construction, $\tau^L$ is a disjoint $i$-tuple from $(0, n)^{i} \to (t,n - s + 1)^{i}$, $\tau^R$ is a disjoint $(i-1)$-tuple from $(0, n)^{i-1} \to (t,n - s + 2)^{i-1}$, and from this definition we see that \eqref{eq:tau_pi_party} holds.
	
 Now, $|\tau^L|_f + |\tau^R|_f$ is bounded above by the left side of \eqref{E:quad-interior} and $|\pi|_f + |\pi'|_f$ equals the right side of \eqref{E:quad-interior}, yielding the result.
\end{proof}

We now move to understanding the map $f \mapsto W(f \oplus \al)$ for large $\al$.

\begin{lemma}
	\label{L:f-lemma}
	Let $f \in \sD^n_t$. For $\al \ge (n-1)(Wf_1(t) - Wf_n(t))$, we have $f \oplus \al \in \sD^n_{*t}$ and $W(f \oplus \al) = W_t f \oplus \Delta_{t, \al} G_t f$, for some function $\Delta_{t, \al}:\operatorname{GT}_n \to \sE^{n, t}$. Here $\sE^{n, t}$ is the space of $k$-tuples of cadlag paths from $[t, \infty) \to \R$ (with possibly negative jumps).
\end{lemma}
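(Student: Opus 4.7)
My plan is to attack both parts of the lemma via a single mechanism: splitting at time $t$ using the metric composition law (Lemma~\ref{L:split-path}), together with the observation that when $\al$ exceeds the stated threshold, the atoms of $g_\al$ at $(t+i, i)$ with weights $\al i$ are so large that they force every near-optimal $k$-tuple to occupy the bottom $k$ lines $\{n-k+1, \dots, n\}$ at time $t$.

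For $f \oplus \al \in \sD^n_{*t}$, I will build an explicit quasi-optimizer agreeing with the rightmost $k$-tuple on $[0, t]$. For each $k$ and each $\ell > t + n$, let $\pi^\ell$ be the $k$-tuple whose path $i$ follows line $n-k+i$ on $[0, t]$, continues on that line past time $t + n - k + i$ to collect the atom there, and then jumps instantaneously up to reach $(\ell, 1)$. A key combinatorial fact is that any single path from $(0, n)$ toward $(\infty, 1)$ can collect at most one atom of $g_\al$, since collecting atoms on lines $i > i'$ would require jump times with $t+i \le t_{i-1} \le t_{i'} \le t+i'$, contradicting $i > i'$. Combined with the left-to-right ordering of disjoint $k$-tuples, this pins the maximal atom contribution at $\al \sum_{i=n-k+1}^n i$, attained by $\pi^\ell$'s schedule. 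Splitting an arbitrary competitor at time $t$ via Lemma~\ref{L:split-path}, its $f$-gain on $[0, t]$ over $\pi^\ell$ is bounded uniformly in $k$ by $(n-1)(Wf_1(t) - Wf_n(t))$, while any non-bottom configuration at time $t$ loses at least $\al$ of atom weight on $[t, \infty)$; the hypothesis on $\al$ therefore forces $\pi^\ell$ to be quasi-optimal.

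The identity $W(f \oplus \al)|_{[0, t]} = W_t f$ is immediate, since for $y \le t$ every admissible path for $(f \oplus \al)[(0, n)^k \to (y, 1)^k]$ stays within $[0, t] \X \{1, \dots, n\}$ and the $W$ and $W_t$ formulas coincide. For $y > t$, a time-$t$ split yields
\begin{equation*}
(f \oplus \al)[(0, n)^k \to (y, 1)^k] = \max_{\mathbf{m}} \bigl(f[(0, n)^k \to (t, \mathbf{m})] + g_\al[(t, \mathbf{m}) \to (y, 1)^k]\bigr),
\end{equation*}
where $\mathbf{m} = (m_1 < \dots < m_k)$ ranges over ordered position tuples at time $t$, and the second factor depends only on $(\mathbf{m}, \al, t, y)$, not on $f$. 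The main obstacle is then to show that each $f[(0, n)^k \to (t, \mathbf{m})]$ depends on $f$ only through $G_t f$. My approach is a Greene-type reduction: the special case $\mathbf{m} = (n-k+1, \dots, n)$ is rigid (essential disjointness pins path $i$ to line $m_i$ throughout $[0, t]$, giving $\sum_{i=1}^k f_{m_i}(t) = \sum_{i=1}^k G_t f_i(k)$), and general $\mathbf{m}$ reduces to combinations of the constant-endpoint LPP values $f[(0, n)^j \to (t, n-s+1)^j]$ defining $G_t f$ by iterating Lemma~\ref{L:split-path} at the line levels interposed between consecutive $m_i$. Granting this, $(f \oplus \al)[(0, n)^k \to (y, 1)^k]$ is a function of $G_t f$ for each $k$ and $y$, so setting $\Delta_{t, \al} G_t f := W(f \oplus \al)|_{[t, \infty)}$ gives the desired factorization.
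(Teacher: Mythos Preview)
Your argument for $f \oplus \al \in \sD^n_{*t}$ and for $W(f\oplus\al)|_{[0,t]} = W_t f$ is fine and close in spirit to the paper's. The gap is in the last part, where you want $W(f\oplus\al)|_{[t,\infty)}$ to depend on $f$ only through $G_tf$.

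You reduce this to the claim that every mixed-endpoint value $f[(0,n)^k \to (t,\mathbf{m})]$ is a function of the constant-endpoint values $f[(0,n)^j \to (t,n-s+1)^j]$ recorded in $G_t f$, via ``iterating Lemma~\ref{L:split-path} at the line levels interposed between consecutive $m_i$.'' But Lemma~\ref{L:split-path} as stated applies only to endpoints $(\bx,\ell)$ and $(\by,m)$ lying on a \emph{common} line; it cannot be applied to $f[(0,n)^k \to (t,\mathbf{m})]$ with distinct $m_i$, and no iteration of it produces such a quantity on either side. (Also, your ``split at time $t$'' is a vertical decomposition, not the horizontal one of Lemma~\ref{L:split-path}; it is valid but is a separate easy statement.) The assertion that all left-to-right last passage values are encoded in $G_tf$ is in fact true, but the paper explicitly flags it as something it does \emph{not} prove (Remark~\ref{R:other-weights}), so you cannot invoke it here without a real argument.

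The paper avoids this entirely. Rather than showing that \emph{every} term in your $\max_{\mathbf m}$ is a $G_tf$-function, it identifies the single maximizing configuration: for the $\ell$-path value up to time $t+k$, the $\al$-threshold forces the optimizer to cross time $t$ with all paths at line $(k-\ell)^+ + 1$, and for this particular (constant-line) endpoint the $f$-contribution is exactly $f[(0,n)^\ell \to (t,(k-\ell)^+ +1)^\ell] = \sum_{i=1}^\ell G_tf_i(n-(k-\ell)^+)$, a $G_tf$ entry by definition. This is essentially the same $\al$-versus-$f$ tradeoff you already carried out for the quasi-optimizer; you just need to run it for each target time $t+k$ rather than only for large $\ell$, and read off the explicit $G_tf$ entry that appears. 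Doing so closes the gap and yields the explicit formula for $\Delta_{t,\al}$ that the paper then uses in Lemma~\ref{L:Delta-al-lemma}.
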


\begin{proof}
On $[0, t]$, the formula in Proposition \ref{P:W=w}(iv) implies that $W(f \oplus \al) = Wf$. Next, we show that $W(f \oplus \al) $ is an explicit function of $G_t f$ on $[t, \infty)$. The action of $W$ applied to $f \oplus \alpha$  is illustrated in Figure \ref{fig:pathsalpha}. Indeed, we can see that for $k, \ell \in \{1, \dots, n\}$, there will be a disjoint $k$-tuple $\pi$ across $f \oplus \al$ from $(0, n)^\ell$ to $(t + k, 1)^\ell$ that picks up large $\al$-weights at locations $(1 + (k-\ell)^+, t + (k-\ell)^+ + 1), \dots, (k, t + k)$. By ensuring that these weights are chosen and that $\pi$ is also optimal on $[0, t]$, we can ensure that $\pi$ has weight 
$$
\sum_{i= 1 + (k - \ell)^+}^k \al i + f[(0, n)^\ell \to (t, (k-\ell)^+ + 1)^\ell] = \sum_{i= 1 + (k - \ell)^+}^k \al i + \sum_{i=1}^\ell G_t f_i(n - (k-\ell)^+).
$$
No other disjoint $k$-tuple from $(0, n)^\ell$ to $(1, t + k)^\ell$ can improve the $\al$-part of the sum above, and any disjoint $k$-tuple $\tau$ can only improve the $f$-part of the sum by at most
$$
f[(0, n)^\ell \to (t, 1)^\ell] - f[(0, n)^\ell \to(t, (k-\ell)^+ + 1)^\ell] \le (n-1)(Wf_1(t) - Wf_n(t)),
$$
at the expense of at least one $\al$. Since $\al > (n-1)(Wf_1(t) - Wf_n(t))$, we have $|\pi|_{f \oplus \al} > |\tau|_{f \oplus \al}$, so $\pi$ is an optimizer. The story for optimizers up to time $t + k$ when $k \notin \{1, \dots, n\}$ is similar by rounding down to the nearest integer time. Therefore $W(f \oplus \al) $ is an explicit function of $G_t f$ on $[t, \infty)$. Moreover, from the construction of optimizers above from $(0, n)^\ell$ to $(t+k, 1)^\ell$ for $k \ge n$, we can see that $f \oplus \al \in \sD^n_{*t}$. 
\end{proof}

The proof of Lemma \ref{L:f-lemma} allows us to explicitly construct the function $\Delta_{t, \al}$. While this will be necessary to fill in some proof details regarding the invertibility of $\operatorname{RSK}_t$, for now it will be easier to think of the map $\Delta_{t, \al}$ abstractly, as a \emph{linear map} from $\R^{\binom{n + 1}{2}}$ (which contains $\operatorname{GT}_n$) onto an $\binom{n + 1}{2}$-dimensional linear subspace of $\sE^{n, t}$, the space of $k$-tuples of cadlag functions from $[t, \infty) \to \R$. It has the following properties. 

\begin{lemma}
	\label{L:Delta-al-lemma} For every $t, \al > 0$, the map $\Delta_{t, \al}:\operatorname{GT}_n \to \sE^{n, t}$ satisfies the following:
	\begin{enumerate}[nosep, label=(\roman*)]
		\item $\Delta_{t, \al}$ is one-to-one.
		\item $\Delta_{t, \al}g(t) = g(n)$.
		\item For $\al \ge \al_g := (n-1)(g_1(n) - g_n(n))$, paths in $\Delta_{t, \al} g$ are cadlag with only positive jumps
		\item The paths in $\Delta_{t, \al} g$ are Pitman ordered: $\Delta_{t, \al} g_{i+1}(s^-) \le \Delta_{t, \al} g_i(s)$ for all $s, i$.
		\item For $\al \ge \al_g$ and $0 < r \le n$ we have
		\begin{equation}
		\label{E:Delta-mass}
		\Delta_{t, \al} g[(t + r, n)^k \to (t + n, 1)^k] = \sum_{i=\cl{r} \wedge (n - k + 1)}^n \al i.
		\end{equation}
	\end{enumerate}
\end{lemma}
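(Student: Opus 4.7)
The strategy is to define $\Delta_{t,\al}g = h$ explicitly via the formula extracted from the proof of Lemma \ref{L:f-lemma}, then verify each property by direct computation. Concretely, $h$ is piecewise constant on $[t,\infty)$ with possible jumps only at $t+1,\ldots,t+n$, and
\[
h_i(t+k) = \begin{cases} g_i(n), & i > k, \\ \al(k-i+1) + g_i(n-k+i) + \sum_{a=1}^{i-1}\bigl[g_a(n-k+i) - g_a(n-k+i-1)\bigr], & i \le k.\end{cases}
\]
This is affine in the entries of $g$ and in $\al$, so $\Delta_{t,\al}$ extends to a linear map on $\R^{\binom{n+1}{2}}$.

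Parts (ii) and (i) are short. For (ii), the $k=0$ case of the formula puts every $i$ in the first branch, giving $h_i(t) = g_i(n)$. For (i), fix a row index $j$ and evaluate the formula at $k = n-j+a$ for $a = 1,\ldots,j$; one reads off $h_a(t+n-j+a) = \al(n-j+1) + g_a(j) + (\text{linear in } g_b(j), g_b(j-1) \text{ for } b < a)$, a lower-triangular system recovering $g_1(j),\ldots,g_j(j)$ in turn. Running over $j = 1,\ldots,n$ inverts the whole map.

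The technical heart is (iii) and (iv), both of which I will prove under the hypothesis $\al \ge \al_g$ (I read this as the implicit hypothesis in (iv) inherited from (iii); it really is needed, since e.g.\ $n=2$, $g_1(1)=5$, $g_1(2)=10$, $g_2(2)=3$, $\al=1$ gives $h_2(t+2)=9 > 7 = h_1(t+2)$). Let $D_a(s) := g_a(s) - g_a(s-1) \ge 0$ and $S_i(s) := \sum_{a=1}^i D_a(s)$. A direct computation from the formula shows the jump $J_{k,i} := h_i(t+k) - h_i((t+k)^-)$ vanishes for $k < i$, equals $\al + S_{i-1}(n)$ for $k = i$, and equals $\al - [S_i(j+1) - S_{i-1}(j)]$ for $k > i$ with $j := n-k+i$. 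The parallel computation on the constant piece $[t+k, t+k+1)$ gives $h_i(y) - h_{i+1}(y) = \al + [g_i(j) - g_{i+1}(j+1)] + [S_{i-1}(j) - S_i(j+1)]$ when $k > i$ and is trivially nonnegative otherwise. In both expressions the only potentially negative term is $-S_i(j+1)$, controlled by the key inequality
\[
S_i(j+1) \;\le\; g_1(j+1) - g_{i+1}(j+1) \;\le\; g_1(n) - g_n(n),
\]
obtained by summing the GT inequality $g_a(j) \ge g_{a+1}(j+1)$ over $a = 1,\ldots,i$ (the sums telescope), then using monotonicity $g_1(j+1) \le g_1(n)$ and the diagonal chain $g_{i+1}(j+1) \ge g_{i+1}(i+1) \ge g_{i+2}(i+2) \ge \cdots \ge g_n(n)$. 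Combined with $\al \ge \al_g \ge g_1(n) - g_n(n)$, both expressions are $\ge 0$, yielding (iii) and (iv).

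For (v), once (iv) is in hand, $\Delta_{t,\al}g$ lies in $\sD^n_\uparrow$ on $[t,\infty)$. The cleanest argument is that for any $f \in \sD^n_t$ with $G_t f = g$, the derivation in Lemma \ref{L:f-lemma} identifies $h = W(f \oplus \al)|_{[t,\infty)}$ when $\al \ge \al_g$ (the formula was extracted from precisely this identification). Using the isometry $W(f \oplus \al) \sim f \oplus \al$ from Proposition \ref{P:W=w}(i), and noting that $f \oplus \al$ agrees on $[t+r,\infty)$ with the purely atomic environment $g_\al$ (atoms $(t+i, i)$ of weight $\al i$), we obtain
\[
\Delta_{t,\al}g[(t+r, n)^k \to (t+n, 1)^k] \;=\; g_\al[(t+r, n)^k \to (t+n, 1)^k],
\]
which is manifestly independent of $g$. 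The diagonal layout of the $g_\al$-atoms forces each disjoint path to contain at most one atom, so the optimal $k$-tuple selects the $k$ largest reachable atoms, namely those on lines $\max(\cl{r}, n-k+1),\ldots,n$, with total weight $\sum_{i = \max(\cl{r}, n-k+1)}^n \al i$. (Small examples, e.g.\ $n=3$, $k=1$, $r=1$, $g=0$, $\al=1$, give LPP $=3$ and confirm that the $\wedge$ in the lemma statement is a typo for $\vee$.) The principal obstacle throughout is identifying the key inequality $S_i(j+1) \le g_1(n) - g_n(n)$; once this is in hand the remaining verifications are mechanical.
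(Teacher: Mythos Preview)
Your treatment of (i)--(iv) is correct and parallels the paper's: both extract the explicit atomic formula from Lemma~\ref{L:f-lemma} and verify the claims by direct computation. Your key inequality $S_i(j+1)\le g_1(n)-g_n(n)$ is a slightly sharper bound than the paper's term-by-term estimate (the paper bounds each $D_a(j+1)\le g_1(n)-g_n(n)$ and counts at most $n-1$ negative terms), but the two arguments are otherwise identical in spirit. You are also right that (iv) requires $\al\ge\al_g$ and that the $\wedge$ in \eqref{E:Delta-mass} should be $\vee$; these are genuine typos in the statement.

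Your argument for (v), however, is circular. You assume the existence of some $f\in\sD^n_t$ with $G_tf=g$ and then invoke the isometry $W(f\oplus\al)\sim f\oplus\al$ to reduce to last passage in the trivial environment $g_\al$. But the existence of such an $f$ for an \emph{arbitrary} $g\in\operatorname{GT}_n$ is exactly the surjectivity half of the bijection $\operatorname{RSK}_t:\sD^n_t\to\sG^n_t$, and that bijection is proved in the very next proposition \emph{using} part (v) of this lemma (the proof applies \eqref{E:Delta-mass} to an arbitrary $(w,g)\in\sG^n_t$ in order to compute $M(w\oplus\Delta_{t,\al_g}g)$). The paper flags this issue explicitly just before the statement of the lemma: ``properties (ii)--(v) above are easy to see when $g=G_tf$ for some $f$,'' and then proceeds to prove (v) for general $g$ by a direct combinatorial argument on the explicit atomic structure of $\Delta_{t,\al}g$---constructing a candidate optimizer $\pi$ and ruling out any improvement via a rightward-pushing contradiction. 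Your reduction to $g_\al$ is cleaner when it applies, but you would need an independent proof that $G_t$ is onto (or a density-plus-continuity argument, noting that the left side of \eqref{E:Delta-mass} is piecewise linear in $g$) to close the gap.
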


Note that properties (ii)-(v) above are easy to see when $g = G_t f$ for some $f$. In this case, properties (ii)-(iv) follow from the fact that $Wf  \oplus \Delta_{t, \al} g = W(f \oplus \al) \in \sD^n_\uparrow$ (Lemma \ref{L:f-lemma}) and property (v) follows immediately from the fact that $Wf  \oplus \Delta_{t, \al} g$ is isometric to $f \oplus \al$. 

We postpone the proof of Lemma \ref{L:Delta-al-lemma} for now, and use it to show the invertibility of $\operatorname{RSK}_t$. Let $O_t: \sG^n_t \to \sD^n$ be the map taking a pair $(w, g) \mapsto w \oplus \Delta_{t, \al_g} g$, 
	where $\al_g = (n-1)(g_1(n) - g_n(n))$. The concatenation here is well-defined by Lemma \ref{L:Delta-al-lemma}(ii) and the fact that $w(t) = g(n)$.
	Also let $\Ga_t : \sD^n \to \sD^n_t$ be the restriction of a function to $[0, t]$.

\begin{figure}
\begin{center}
	\includegraphics[scale=1.2]{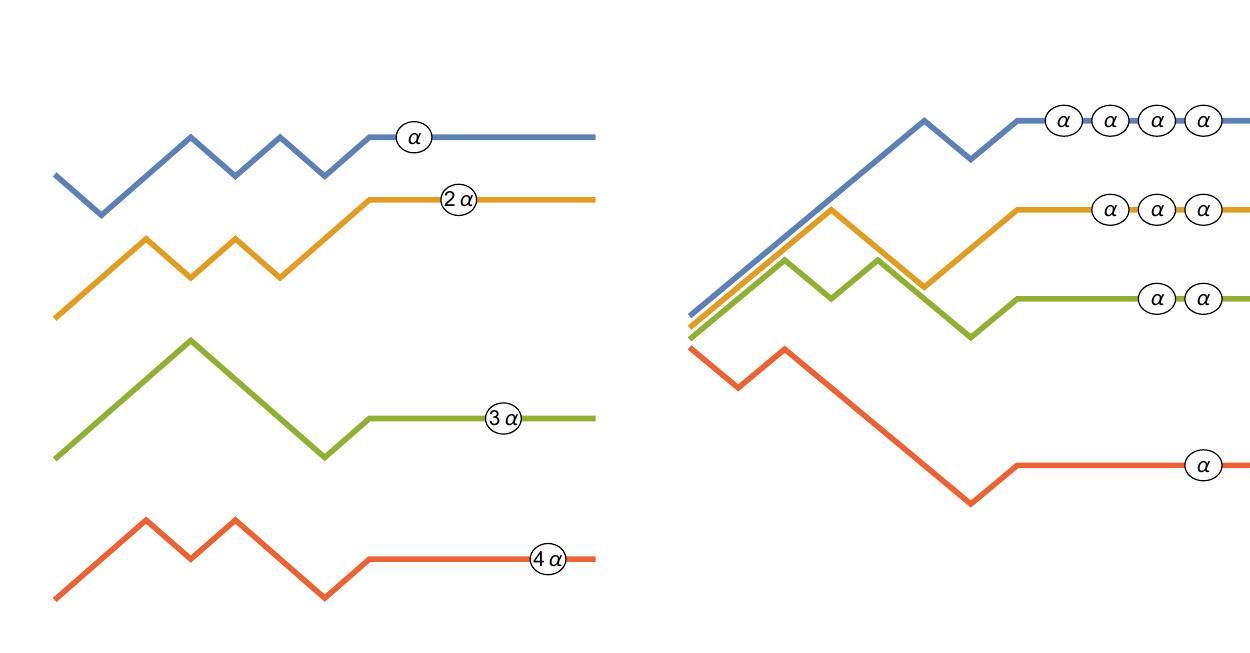}
\end{center}
\caption{An example of $f \oplus \al$ and $W(f \oplus \al)$ for a function $f \in \sD^4_t$. 
	In $W(f \oplus \al)$, the ten weights all have a contribution of a single $\al$ plus a contribution related to $G_t f$, not contained in the figure.}
	\label{fig:pathsalpha}
\end{figure}

\begin{prop}
	The map $\operatorname{RSK}_t:\sD^n_t \to \sG^n_t$ is a bijection with inverse
	$
	\operatorname{RSK}_t^{-1} := \Ga_t M O_t.
	$
\end{prop}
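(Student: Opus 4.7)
The plan is to verify that $\Ga_t M O_t$ is a two-sided inverse of $\operatorname{RSK}_t$, checking each composition in turn.

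For injectivity, fix $f\in\sD^n_t$ and set $\al:=\al_{G_t f}=(n-1)(g_1(n)-g_n(n))$ with $g=G_t f$. Since $g(n)=W_t f(t)$ by definition of $G_t$, this $\al$ meets the threshold of Lemma \ref{L:f-lemma}, so $f\oplus\al\in\sD^n_{*t}$ and $W(f\oplus\al)=W_t f\oplus\De_{t,\al}G_t f=O_t(\operatorname{RSK}_t(f))$. Applying $\Ga_t M$ and chaining $MW=M$ from Proposition \ref{P:MW-relations} with Corollary \ref{C:Mf0t},
$$
\Ga_t M O_t(\operatorname{RSK}_t(f))=\Ga_t MW(f\oplus\al)=\Ga_t M(f\oplus\al)=(f\oplus\al)|_{[0,t]}=f.
$$

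For surjectivity, fix $(w,g)\in\sG^n_t$, let $h:=O_t(w,g)=w\oplus\De_{t,\al_g}g$, and set $\tilde f:=\Ga_t M O_t(w,g)$. Since $w\in\sD^n_{t\uparrow}$, $w(t)=g(n)$, and $\De_{t,\al_g}g$ is Pitman ordered on $[t,\infty)$ by Lemma \ref{L:Delta-al-lemma}(ii),(iv), the concatenation $h$ lies in $\sD^n_\uparrow$, so $Wh=h$ and $W(Mh)=h$ using $WM=W$. Any path from $(0,n)$ to $(y,1)$ with $y\le t$ stays in $[0,t]$, so the last passage values of $\tilde f$ and $Mh$ coincide on such endpoints, and
$$
\sum_{i=1}^k W_t\tilde f_i(y)=\tilde f[(0,n)^k\to(y,1)^k]=\sum_{i=1}^k h_i(y)=\sum_{i=1}^k w_i(y)\quad\text{for}\quad y\le t
$$
yields $W_t\tilde f=w$.

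To establish $G_t\tilde f=g$, apply Lemma \ref{L:f-lemma} to $\tilde f$: since $W_t\tilde f=w$, its threshold is exactly $\al_g$, giving $\tilde f\oplus\al_g\in\sD^n_{*t}$ and $W(\tilde f\oplus\al_g)=w\oplus\De_{t,\al_g}G_t\tilde f$. By Corollary \ref{C:Mf0t}, $M(\tilde f\oplus\al_g)|_{[0,t]}=\tilde f=Mh|_{[0,t]}$, and together with $MW=M$ this shows that the two elements $w\oplus\De_{t,\al_g}G_t\tilde f$ and $w\oplus\De_{t,\al_g}g$ of $\sD^n_\uparrow$ have $M$-images agreeing on $[0,t]$. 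I would then use the explicit $[t,\infty)$ last passage formula in Lemma \ref{L:Delta-al-lemma}(v) and the isometry of $M$ from Lemma \ref{L:M-properties-1}(i) to propagate this agreement beyond $t$ and conclude equality of the two Pitman-ordered functions. Injectivity of $\De_{t,\al_g}$ in Lemma \ref{L:Delta-al-lemma}(i) then yields $G_t\tilde f=g$, completing $\operatorname{RSK}_t(\tilde f)=(w,g)$.

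The main obstacle is the last propagation: the isometry of $M$ only controls top-to-bottom last passage values, whereas $G_t$ records last passage to intermediate endpoints, so one must leverage the rigid atomic/Pitman-ordered structure on $[t,\infty)$ common to both sides to bootstrap $[0,t]$-agreement of the $M$-images into full agreement in $\sD^n$.
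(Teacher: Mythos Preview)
Your first direction, $\Ga_t M O_t\circ\operatorname{RSK}_t=\id$, is correct and is exactly the computation \eqref{E:MWf} in the paper.

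For the second direction you have the right architecture but the final ``propagation'' step is not justified, and as stated it does not obviously work. Knowing that the $M$-images of $w\oplus\De_{t,\al_g}G_t\tilde f$ and $w\oplus\De_{t,\al_g}g$ agree on $[0,t]$, together with isometry of $M$ (which only controls line-$n$-to-line-$1$ last passage values), is not enough to force the two Pitman-ordered functions to coincide on $[t,\infty)$: the quantities $G_t$ records are last passage values to \emph{intermediate} lines, exactly the data isometry does not see. Your suggestion to use Lemma~\ref{L:Delta-al-lemma}(v) is the right ingredient, but you are deploying it indirectly.

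The paper's route is to use Lemma~\ref{L:Delta-al-lemma}(v) \emph{directly} to compute $Mh$ on $(t,\infty)$. With $h=w\oplus\De_{t,\al_g}g$, the defining formula \eqref{E:Mtg-def} for $M$ together with \eqref{E:Delta-mass} shows that on $(t,t+n]$ the measure $d(Mh)$ is purely atomic with atoms of size $\al_g i$ at $(t+i,i)$; in other words $Mh=\tilde f\oplus\al_g$ globally, not just on $[0,t]$. Now apply $W$ and $WM=W$: $h=W(Mh)=W(\tilde f\oplus\al_g)$, and Lemma~\ref{L:f-lemma} gives $W(\tilde f\oplus\al_g)=W_t\tilde f\oplus\De_{t,\al_g}G_t\tilde f$. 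Comparing the two sides of $h=w\oplus\De_{t,\al_g}g$ yields $W_t\tilde f=w$ and $\De_{t,\al_g}G_t\tilde f=\De_{t,\al_g}g$ simultaneously, and injectivity of $\De_{t,\al_g}$ finishes. This bypasses your propagation issue entirely and also makes your separate argument for $W_t\tilde f=w$ unnecessary.
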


\begin{proof}
	First, for $(w, g) \in \sG^n_t$ we claim that
	\begin{equation}
	\label{E:MW}
	M (w \oplus \Delta_{t, \al_g} g) = f \oplus \al_g,
	\end{equation}
	for some $f \in \sD^n_t$. Indeed, from the definition \eqref{E:Mtg-def} and \eqref{E:Mf-formula} for $M$ we see that for any $0 < r \le n$ we have
	$$
	\sum_{i=n - k + 1}^n M(w \oplus \Delta_{t, \al_g} g)_i((t + r)^-)  = c_k -\Delta_{t, \al} g[(t + r, n)^k \to (t + n, 1)^k] 
	$$
	where $c_k = (w \oplus \Delta_{t, \al_g} g)[(0, n)^k \to (t + n, 1)^k]$. Next, Lemma \ref{L:Delta-al-lemma}(iii) gives that
	$$
	\Delta_{t, \al} g[(t + r, n)^k \to (t + n, 1)^k] = \sum^n_{\lceil r \rceil \wedge (n - k + 1)} \al i .
	$$
	This implies the representation \eqref{E:MW}. Next, by Proposition \ref{P:MW-relations}, 
	$$
	w \oplus \Delta_{t, \al_g} g = W M (w \oplus \Delta_{t, \al_g} g) = W(f \oplus \al_g).
	$$
	In particular, $W_t f = w$. Moreover,  $W_tf(t) = \Delta_{t, \al_g} g = g(n)$ by Lemma \ref{L:Delta-al-lemma}(ii), so $\al_g = (n-1)(Wf_1(t) - Wf_n(t))$. Therefore by Lemma \ref{L:f-lemma}, $W(f \oplus \al_g) = W_t f \oplus \Delta_{t, \al_g} G_t f$, so $\Delta_{t, \al_g} G_t f = \Delta_{t, \al_g} g$. Since $\Delta_{t, \al_g}$ is one-to-one (Lemma \ref{L:Delta-al-lemma}(i)), we get that $G_t f = g$. Putting all this together gives that $\operatorname{RSK}_t \Ga_t M O_t (w, g) = (w, g)$.
	
	On the other hand, $\Ga_t M O_t \operatorname{RSK}_t f = f$ via the computation \eqref{E:MWf}.
\end{proof}

\begin{proof}[Proof of Lemma \ref{L:Delta-al-lemma}]
	We first find an explicit formula for $\Delta_{t,\al}$. Following from the proof of Lemma \ref{L:f-lemma}, we can see that $\Delta_{t, \al}$ is given by the following two rules:
\begin{itemize}
	\item $\Delta_{t, \al}g(t) = g(n)$, and on $(t, \infty) \X \{1, \dots, n\}$, the finitely additive measure $d\Delta_{t, \al} g$ is purely atomic with support contained in the set of points $(t + k, \ell), \ell \le k \in \{1, \dots, n\}$.
	\item For $k, \ell \in \{1, \dots, n\}$,
	\begin{equation}\label{eq:g_alpha}
	\sum_{i=1}^\ell \Delta_{t, \al}g_i(t + k) = \sum_{i= 1 + (k - \ell)^+}^k \al i + \sum_{i=1}^\ell g_i(n - (k - \ell)^+).
	\end{equation}
\end{itemize}
{\bf Proof of (i, ii).} By rearranging equation \eqref{eq:g_alpha} it is verified from these formulas that $\Delta_{t, \al}g$ determines $g$, giving (i). Part (ii) follows from the first bullet point.

\noindent {\bf Proof of (iii).} Now, for $k \in \{1, \dots, n\}$ and $\ell < k$, from equation \eqref{eq:g_alpha}, $d\Delta_{t, \al} g$ has an atom of size
\begin{equation}
\label{E:alpha-atom}
\al - 
\sum_{i=1}^\ell [g_i(n - k + \ell + 1) - g_i(n - k + \ell)] + \sum_{i=1}^{\ell-1} [g_i(n - k + \ell)- g_i(n - k + \ell - 1)]
\end{equation} 
at $(t + k, \ell)$, and has an atom of size $\al + \sum_{i=1}^{k-1} g_i(n) - g_i(n-1)$ at the points $(t + k, k)$ for $k \in \{ 1, \dots, n\}$. Note that for $\ell = 1$ the second sum in \eqref{E:alpha-atom} is empty. To prove (ii), we first show that all these atoms have positive weight. For the atoms at $(t + k, k), k = 2, \dots, n$, this is true by the Gelfand-Tsetlin inequalities \eqref{E:GT-inequalities}. For the remaining atoms, by \eqref{E:GT-inequalities}, all terms in both sums in \eqref{E:alpha-atom} are nonnegative and bounded above by $g_1(n) - g_n(n)$. Since at most $n-1$ terms in \eqref{E:alpha-atom} come with a negative sign. By our choice of $\al_g$ and since $\al \ge \al_g$, these remaining atoms are positive. This shows that the paths in $\Delta_{t, \al} g$ have positive jumps.

\noindent {\bf Proof of (iv).} Next, we check that $\Delta_{t, \al} g_{i+1}(s) \le \Delta_{t, \al} g_{i}(s^-)$ for all $i \le n-1, s \in [0, \infty)$. This inequality holds at time $t$ since $g(n)$ is ordered by \eqref{E:GT-inequalities}. For $s > t$, it suffices to show that for $k, \ell \in \{1, \dots, n\}$,
$$
\Delta_{t, \al} g_{\ell+1}(t + k) \le \Delta_{t, \al} g_\ell(t + k -1).
$$ 
Writing out both sides of the inequality in terms of $g$ and cancelling common terms, this inequality is equivalent to
\begin{equation*}
g_{\ell+ 1}(n-(k -\ell -1)^+) \le g_{\ell}(n- (k - \ell)^+).
\end{equation*}
For $\ell < k$, this is the second Gelfand-Tsetlin inequality in \eqref{E:GT-inequalities}. For $\ell \ge k$, this follows since the vector $g(n)$ is ordered. 

\noindent {\bf Proof of (v).} (It may help with visualizing the argument to look at the right side of Figure \ref{fig:pathsalpha} when following the proof.) Let $0 <  r \le n$. First, for $i = 1, \dots, k$, let $\pi_i$ be the unique path from $(t + r, n)$ to $(t+n, 1)$ containing the sets 
$$
[t + (i + n - k) \vee r , t + n] \X \{i\}, \qquad [t + r, t + (i + n - k) \vee r] \X \{i + n - k\}.
$$ 
Then $\pi = (\pi_1, \dots, \pi_k)$ is a disjoint $k$-tuple from $(t+r, n)^k$ to $(t+n, 1)^k$ whose weight equal to $\al (\sum_{i=\cl{r}}^n i)$. For $k > n - \cl{r}$, this $k$-tuple collects all the atoms of $d \Delta_{t, \al} g$ in the box $[t + r, t+n] \X \{1, \dots, n\}$, and so must be an optimizer. 

For $k \le n - \cl{r}$, we need to check that no other disjoint $k$-tuple can have greater weight. First, since all atoms of $d \Delta_{t, \al} g$ are at times in $t + \Z$, we can restrict our attention to disjoint $k$-tuples from $(t+r, n)^k$ to $(t+n, 1)^k$ whose jumps are also in $t + \Z$.
Let $S$ be the finite set of such $k$-tuples, and let $S' \sset S$ be the subset consisting of $k$-tuples of weight strictly greater than $\al(\sum_{i=\cl{r}}^n i)$. It is enough to show that $S'$ is empty. Suppose, for the sake of contradiction that this is not the case. Let $\tau \in S'$ can be chosen so that 
$$
J(\tau) := \max \{j \in \{1, \dots, k\} : \tau_j \ne \pi_j\}
$$
is minimal among all paths in $S^\prime$, and such that for any $\sig \in S'$ with $J(\sig) = J(\tau)$, the path $\sig_{J(\tau)}$ is \emph{not} to the right of $\tau_{J(\tau)}$. For ease of notation, set $j = J(\tau)$. By this choice, note that $\tau_j$ must be to the left of $\pi_j$. We will show that we can push $\tau_j$ further to the right while remaining in $S^\prime$, which will be a contradiction.

Since $\tau_j$ has jumps at integer times and is to the left of $\pi_j$, it necessarily collects an atom at some location $(t +\ell, \ell)$ for some $\ell < n - k + j$. It also collects an atom at some location $q:=(t+ \ell + 1, m)$ for some maximal index $m$. The restriction of the path $\tau_j$ from $(t + r, n)$ to $q$ then has weight
\begin{equation}
\label{E:tau-length}
\begin{split}
L := &-[g_m(n-\ell + m) - g_m(n - \ell + m - 1)]  \\ &+\sum_{i=1}^{m-1}[g_i(n - \ell + m - 1) - g_i(n - \ell + m - 2)] + \al(\ell - m + 2).
\end{split}
\end{equation}
Now, consider an alternate path $\sig_j$ which is equal to $\tau_j$ from $q$ to $(t +n, 1)$, but from $(t + r, n)$ to $q$ is given by the rightmost path. The path $\sig_j$ is to the right of $\tau_j$ but still to the left of $\pi_j$. Therefore setting $\sig_i = \tau_i$ for all $i \ne j$, $\sig$ is a disjoint $k$-tuple from $(t +r, n)^k$ to $(t+n, 1)$. 

Moreover, the length of $\sig_j$ from $(t + r, n)$ to $q$ is simply the sum of all the atoms in the vertical strip from $(t +\ell + 1, \ell + 1)$ to $(t +\ell + 1, m)$. This length $L'$ equals the second line in \eqref{E:tau-length}. Therefore by the first inequality in \eqref{E:GT-inequalities}, $L' \ge L$. Finally, by construction none of the paths $\tau_j, j \ne i$ can pick up any atoms in the vertical strip from $(t +\ell + 1, \ell + 1)$ to $(t +\ell + 1, m)$, so
$$
|\sig|_{\Delta_{t, \al} g} \ge |\tau|_{\Delta_{t, \al} g}
$$
and hence $\sig \in S'$. Also, $J(\sig) = J(\tau)$, and $\sig_j$ is to the right of $\tau_j$. This contradicts the choice of $\tau$, so $S'$ must be empty, as desired.  
\end{proof}


%

\begin{remark}
\label{R:iterated-RSK}
Just as we can build the melon map $W$ as a composition of $2$-line Pitman transforms, see Definition \ref{D:iterated-melon-def}, we can also build the $n$-line RSK correspondence by composing $\binom{n}{2}$ $2$-line correspondences. More precisely, we build up the melon $Wf$ using the maps $\sig_i f$ as in \eqref{eq:w}, but every time we apply one of the maps $\sig_i$ to an intermediate function $g$, we also record the additional value $g_{i+1}(t)$. 

The map $(g_i,g_{i+1}) \mapsto (\sig_i g_i, \sig_i g_{i+1}; g_{i+1}(t))$ is a $2$-line $\operatorname{RSK}_t$ correspondence and hence is invertible, therefore so is the whole correspondence. Moreover, the $\binom{n}{2}$ additional values that we record with this procedure correspond to the $\binom{n}{2}$ entries in the Gelfand-Tsetlin pattern $G_t g_i(j), 1 \le i \le j \le n-1$ that cannot be read off of $W_tf$.
 
Though this basic idea is fairly simple, we found that the method we chose present is more straightforward and geometrically intuitive.
\end{remark}

\begin{remark}
\label{R:other-weights}
Our $\operatorname{RSK}_t$ map is based on one method of embedding $\sD^n_t$ into $\sD^n_{*t}$ by adding heavy weights after time $t$. There are clearly many ways to do this, and different methods will result in different bijections. One common feature of these bijections is that the key data that they see about $f$ beyond its melon $Wf$ will be a collection of left-to-right last passage values from time $0$ to time $t$. Though we will not prove it here, all left-to-right last passage values are contained in $G_t f$, just as all bottom-to-top last passage values are contained in $W_tf$ by Proposition \ref{P:W=w} (i).

Another option for constructing an RSK-like bijection would be to add heavy weights \emph{before} time $0$, essentially embedding $f$ as an element of $\sD^n_\uparrow$. We could also add weights to both sides of $[0, t]$ to embed $f$ as a different distinguished element of an isometry class.

Bijections related to RSK exploring the use of different left-to-right or bottom-to-top last passage values have been constructed in \cite*{dauvergne2020hidden} and \cite*{garver2018minuscule}.
\end{remark}
		
	\subsection{Bijectivity for lattice specializations and other restrictions}
	
	Bijectivity of the cadlag RSK correspondence $\operatorname{RSK}_t:\sD^n_t \to \sG^n_t$ naturally implies that for any subset $A \sset \sD^n_t$, that $\operatorname{RSK}_t$ is also a bijection from $A$ to $\operatorname{RSK}_t(A)$. For certain subsets $A$, we can explicitly identify $\operatorname{RSK}_t(A)$, allowing us to recover previously known bijections and identify some new ones. In the next set of examples, we gather together the restricted bijections that correspond to classical integrable models of last passage percolation. In the $t = \infty$ setting where the Gelfand-Tsetlin pattern is dropped, these examples correspond exactly to those introduced immediately after Theorem \ref{T:Bernoulli-map-intro}.
	
	For these examples, we say that a cadlag function $f$ with positive jumps is \textbf{pure-jump} if $df$ is an atomic measure.
	
	\begin{example}
	\label{Ex:restrictions}
Let $t > 0$.
	\begin{enumerate}
		\item \textbf{Continuous functions.} If $A$ is the set of continuous functions $f \in \sD^n_t$ then $\operatorname{RSK}_t(A)$ is the set of pairs $(w, g) \in \sG^n_t$ such that $w$ is also in $A$ (e.g. $w$ is continuous as well). This setting of continuous functions is studied in detail in \cite*{biane2005littelmann}. 
		\item \textbf{Unit jumps.} Let $A$ be the set of pure-jump functions $f\in \sD^n_t$, such that every jump of each $f_i$ has size $1$, and such that all jumps of $f_i, f_j$ are at distinct locations for $i \ne j$.
		The $\operatorname{RSK}_t(A)$ is the set of pairs $(w, g) \in \sG^n_t$, where $w$ is also in $A$, and all entries of $g$ are nonnegative integers. In this setting, the space $\operatorname{RSK}_t(A)$ is equivalent to the decorated Young Tableau defined in \cite{nica2015decorated}.
		
		\item \textbf{Real jumps at integer times.} Let $A$ be the set of pure-jump functions $f\in \sD^n_t$, such that every $f_i$ only jumps at integer times. Then $\operatorname{RSK}_t(A)$ is the set of pairs $(w, g) \in \sG^n_t$, where $w$ is also in $A$.
		
		\item \textbf{Integer jumps at integer times.} Let $A$ be the set of pure-jump functions $f\in \sD^n_t$, such that every $f_i$ only jumps at integer times and all jumps have integer values. Then $\operatorname{RSK}_t(A)$ is the set of pairs $(w, g) \in \sG^n_t$, where $w$ is also in $A$, and all coordinates of $g$ are nonnegative integers. 
		
		
		\item \textbf{Bernoulli paths.}  Suppose that additionally, $t \in \N$. Let $A$ be the set of all functions $f \in \sD^n_t$ that are linear with slope in $\{0, 1\}$ on every integer interval $[i, i+1]$. Then $\operatorname{RSK}_t(A)$ is the set of pairs $(w, g) \in \sG^n_t$, where $w$ is also in $A$, and all coordinates of $g$ are nonnegative integers.
	\end{enumerate}
	\end{example}

It is easy to verify each of the five examples above from the explicit formulas for $\operatorname{RSK}_t$ and $\operatorname{RSK}_t^{-1}$ in Section \ref{SS:finite-bijection}. 	Example $4$ above corresponds to the usual RSK correspondence via Corollary \ref{C:melon-cor} and Example $5$ corresponds to the dual RSK correspondence via Corollary \ref{C:melon-cordual}.
	
\section{Preservation of uniform measure}
\label{S:preservation}

In each of the five examples in Example \ref{Ex:restrictions}, there are natural measures on $A$ that push forward tractable measures on $\operatorname{RSK}_t(A)$. By taking a limit as $t \to \infty$, we can also get tractable pushforward measures under the original melon map $W:\sD^n \to \sD^n$. Each of these measures corresponds to a classical integrable model of last passage percolation. This is summarized in the following table, essentially repeated from the introduction.

\[\begin{array}{c|l|l} \text{Example}	&  \text{Measure on $\sD^n$}  & \text{LPP model} \\ \hline
	\ref{Ex:restrictions}.1 & \text{Independent Brownian motions} &\text{Brownian LPP} \\
	\ref{Ex:restrictions}.2 & \text{Independent Poisson counting processes}  &\text{Poisson lines LPP} \\
	\ref{Ex:restrictions}.3 & \text{Independent discrete-time geometric random walks}  &\text{Geometric LPP} \\
	\ref{Ex:restrictions}.4 & \text{Independent discrete-time exponential random walks} &\text{Exponential LPP} \\
	\ref{Ex:restrictions}.5 & \text{Piecewise linear walks with independent Bernoulli slopes} &\text{S-J model}
	\end{array}
	\]
In all five examples above, the pushforward of these measures under $W$ is the nonintersecting version of these objects. This is known in all cases, e.g. see \cite*{o2003conditioned} and references therein, or Section 6 of \cite*{DNV1}. The standard proofs of these facts require explicit computations involving determinants and Doob transforms. 
Here we give an alternate approach that is computation-free. We demonstrate this in the case of Bernoulli walks, Example \ref{Ex:restrictions}.5.
	
We start with a more precise setup.
For $k \in \N \cup \{\infty\}$, a function $f:[0, k] \to \R$ is a \textbf{Bernoulli path} if on every integer interval $[n, n+1]$, $f$ is linear with slope in $\{0, 1\}$. A \textbf{Bernoulli walk} of drift $d \in [0, 1]$ is a random Bernoulli path whose slopes are independent Bernoulli random variables of mean $d$, and an \textbf{$n$-dimensional Bernoulli walk} with drift vector $d = (d_1, \dots, d_n)$ is an element of $\sD^n$ whose components are independent Bernoulli walks of drift $d_i$. See Figure \ref{fig:rskbig} for an illustration.

Now, for $t \in \N$ and an ordered vector $x=(x_1 \ge \dots \ge x_n) \in \{0, 1, \dots\}^n$, let $\nu_t(x)$ denote the uniform measure on $n$-tuples of ordered Bernoulli paths $f:[0, t] \to \R, f = (f_1 \ge \dots \ge f_n)$ that satisfy $f(0) = 0, f(t) = x$. There are only finitely many such $k$-tuples, so uniform measure is well-defined. 
A measure $\mu$ on the space of $n$-tuples of ordered Bernoulli paths 
$$
X = (X_1 \ge X_2 \ge \dots \ge X_n), \qquad X_i:[0, \infty) \to \R
$$
is a \textbf{Bernoulli Gibbs measure} if for any integer $t > 0$, the conditional distribution under $\mu$ of $X|_{[0, t]}$ given $X|_{[t, \infty)}$ is $\nu_t(X(t))$. 
We start by showing that $W$ maps Bernoulli walks to Bernoulli Gibbs measures.
\begin{figure}[t]
\centering
\includegraphics[width=0.7\textwidth]{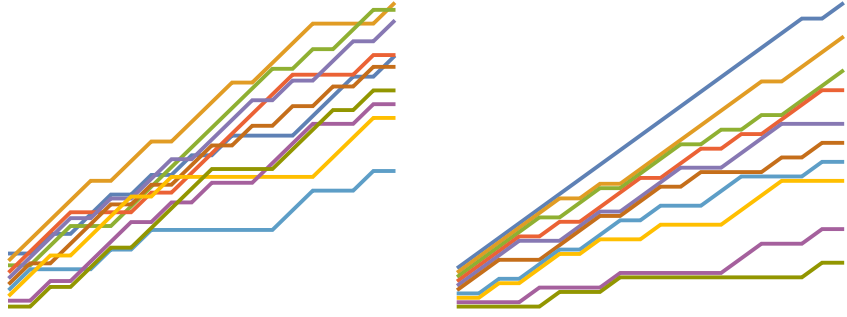}
\caption{10 Bernoulli walks $f$ and their melon $Wf$, 20 steps}
\label{fig:rskbig}
\end{figure}
\begin{theorem}
\label{T:Bernoulli-map}
Let $Y \in \sD^n$ be a Bernoulli walk of drift $d$. Then the law of $WY \in \sD^n_\uparrow$ is a Bernoulli Gibbs measure satisfying
\begin{equation}
\label{E:Gibbs-LLN-pre}
\lim_{t \to \infty} WY(t)/t = d^\circ
\end{equation}
almost surely, where $d^\circ = (d^\circ_1 \ge \dots \ge d^\circ_n)$ are the order statistics of $d$.
\end{theorem}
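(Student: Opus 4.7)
The plan is to prove the Gibbs property and the LLN separately, both via the finite bijection $\operatorname{RSK}_t$ and the exchangeability of Bernoulli walks.

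For the Gibbs property, I would fix $t \in \N$ and exploit two structural facts. First, by independence and exchangeability, the density $P(Y|_{[0,t]} = f)$ under the drift-$d$ law equals $\prod_i d_i^{f_i(t)}(1-d_i)^{t - f_i(t)}$, which depends on $f$ only through $f(t)$. Second, $Y(t)$ is a deterministic function of $G_t Y$: since any disjoint $(n-i+1)$-tuple from $(0,n)^{n-i+1}$ to $(t,i)^{n-i+1}$ must cover the rectangle $[0,t] \X \{i, \dots, n\}$, one has $Y[(0,n)^{n-i+1} \to (t,i)^{n-i+1}] = \sum_{j=i}^n Y_j(t)$, and differences of consecutive such values (read off from $G_t Y$) recover the individual $Y_i(t)$.

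Combining these two facts with the bijectivity of $\operatorname{RSK}_t$ (Example \ref{Ex:restrictions}.5) forces $W_t Y$ conditional on $G_t Y = g$ to be uniform on ordered Bernoulli paths from $0$ to $g(n)$. The Markov property of $Y$ at integer times, together with the fact that $Y(t)$ is $G_t Y$-measurable, gives $W_t Y \perp Y|_{[t, \infty)} \mid G_t Y$. Applying the metric composition law (Lemma \ref{L:split-path}) at time $t$ shows that the multipoint LPP values $Y[(0,n)^k \to (s,1)^k]$ for $s \ge t$ decompose into a contribution determined by the mixed-endpoint values $Y[(0,n)^k \to (t, \br)^k]$ and a contribution coming from $Y|_{[t, \infty)}$; an induction on $k$ identifies the former as a function of $G_t Y$. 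Hence $WY|_{[t, \infty)}$ is a function of $(G_t Y, Y|_{[t, \infty)})$, which promotes the previous independence to $W_t Y \perp WY|_{[t, \infty)} \mid G_t Y$. Integrating out $G_t Y$ over $\{g : g(n) = WY(t)\}$ preserves both uniformity and independence, yielding the Gibbs condition.

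For the LLN, isometry together with $WY \in \sD^n_\uparrow$ (Remark \ref{R:Pitman-ordered}) yields
\[
\sum_{i=1}^k WY_i(t) = Y[(0,n)^k \to (t,1)^k].
\]
A standard variational/subadditive argument shows $t^{-1} Y[(0,n)^k \to (t,1)^k] \to \sum_{i=1}^k d_i^\circ$ almost surely: essentially disjoint paths can occupy total time at most $t$ on any single line, and asymptotically the optimum assigns each of the top $k$ drift lines to exactly one path. Taking differences gives $WY_k(t)/t \to d_k^\circ$. The main obstacle is verifying that the mixed-endpoint multipoint LPP values $Y[(0,n)^k \to (t, \br)^k]$ are determined by $G_t Y$; once this structural fact is in place, the Gibbs half reduces to bookkeeping with conditional probability.
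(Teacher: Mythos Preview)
Your setup for the Gibbs property---biasing the uniform measure, recovering $Y(t)$ from $G_tY$, and invoking the bijection in Example~\ref{Ex:restrictions}.5---matches the paper and correctly gives that $W_tY$ given $G_tY$ is $\nu_t(WY(t))$. The gap is in the step where you pass from conditioning on $G_tY$ to conditioning on $WY|_{[t,\infty)}$. You want $WY|_{[t,\infty)}$ to be $(G_tY,\,Y|_{[t,\infty)})$-measurable, and for this you need the mixed-row boundary values $Y[(0,n)^k\to (t,\mathbf m)]$ for \emph{all} ordered tuples $\mathbf m$, whereas $G_tY$ only directly records the constant-row case $\mathbf m=(j,\dots,j)$. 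It is true that $G_tY$ determines all left-to-right last passage values---this is the ``sideways'' isometry---but it is not a consequence of a simple induction on $k$, and the paper explicitly declines to prove it (Remark~\ref{R:other-weights}). Also note that Lemma~\ref{L:split-path} splits at a \emph{line}, not at a time; a time-$t$ decomposition has to be set up separately.

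The paper avoids this obstacle altogether. It fixes $s$, lets $t\to\infty$, and uses Proposition~\ref{P:sourness} (Remark~\ref{R:sourness-of-random-walks}): for large $t$ the rightmost optimizer from $(0,n)^k$ to $(t,\ell)^k$ follows the bottom $k$ lines on $[0,s]$, so $G_tY$ becomes a function of $Y|_{[s,\infty)}-Y(s)$ alone and hence asymptotically independent of $WY|_{[0,s]}$. Combined with the fact that $\nu_t(\cdot)$ restricts to $\nu_s(\cdot)$, this yields the Gibbs property without ever needing the mixed-endpoint values. For the LLN, your subadditive/variational sketch is plausible, but the paper's route is shorter: $W$ commutes with the scaling $\Lambda_kf(t)=f(kt)/k$ and is continuous in the uniform-on-compact topology, so $\Lambda_kWY(1)=W\Lambda_kY(1)\to W(d\cdot)(1)=d^\circ$, the last equality because $W$ applied to linear functions simply sorts the slopes.
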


\begin{proof} Fix $t > 0$ and let $A, \operatorname{RSK}_t(A)$ be as in Example \ref{Ex:restrictions}.5. The map RSK$_t$ applied to $Y$ up to time $t \in \N$ gives an $n$-tuple of ordered paths $W_tY$ and a Gelfand-Tsetlin pattern $G_tY$. We first show that the law of $W_t Y$ given $G_t Y$ is $\nu_t(WY(t))$, and then use this to deduce the Bernoulli Gibbs property.
We first consider the case $d_i = 1/2$ for all $i$, so that the law of $Y|_{[0, t]}$ is the uniform measure on $A$.

By the bijectivity of $\operatorname{RSK}_t$ in Example \ref{Ex:restrictions}.5, the law of $\operatorname{RSK}_t Y$ is uniform on $\operatorname{RSK}_t(A)$. Therefore, conditionally on $G_tY$, which determines $WY(t)$, the law of $W_t Y$ is $\nu_t(WY(t))$. As an aside, the conditional law of $G_tY$ given $W_t Y$ is also independent and uniform on Gelfand-Tsetlin patterns with $n$th row $WY(t)$. 

Now for general $d\in [0,1]^n$, the law of $Y$ up to time $t \in \N$ is the uniform measure on $A$ biased by the Radon-Nikodym derivative 
$$
2^{nt} \prod_{i=1}^n d_i^{Y_i(t)} (1- d_i)^{t - Y_i(t)}.
$$
Since this derivative only depends on $Y(t)$, the  conditionally law of $Y$ given $Y(t)$ does not depend on the original drift vector $d$. Moreover, for any $i \in \{1, \dots, n\}$ we have
$$
Y_1(t)+\ldots +Y_i(t)=\sum_{j=1}^i G_tY_j(i),
$$
and so $Y(t)$ can be expressed from the the Gelfand-Tsetlin pattern $G_tY$. Therefore conditionally on $G_tY$, the law of $W_t Y = WY|_{[0, t]}$ does not depend on the drift $d$. Therefore as in the $d_i = 1/2$ case, the conditional law of $WY|_{[0, t]}$ given $G_tY$ is still $\nu_t(WY(t))$.

We now use this conditional law to prove the Bernoulli Gibbs property. First, this conditional law implies the stronger claim that for any integers $s \le t$, the conditional law of $WY|_{[0, s]}$ given $WY|_{[s, t]}$ and $G_tY$ is still $\nu_t(WY(s))$. Therefore it suffices to show that as $t \to \infty$, that $WY|_{[0, s]}$ and $G_tY$ are asymptotically independent. For this, it is enough to show that for any $k, \ell \in \{1, \dots, n\}$ with $\ell \ge n - k + 1$, for all large enough $t$ we have
\begin{equation}
\label{E:Ysum}
Y[(0, n)^k \to (\ell, t)^k] = Y[(s, n)^k \to (\ell, t)^k] + \sum_{i= n - k + 1}^n Y_i(s).
\end{equation}
Indeed, the right side of \eqref{E:Ysum} only depends on $Y|_{[s, \infty)} - Y(s)$ which is independent of $WY|_{[0, s]}$, and $G_tY$ can be expressed from the left hand side by varying $\ell, k$. Equation \eqref{E:Ysum} is equivalent to the claim that for large enough $t$, the rightmost disjoint optimizer from $(0, n)^k$ to $(\ell, t)^k$ follows the bottom $k$ paths up to time $s$. This follows from Remark \ref{R:sourness-of-random-walks}.


We now show that $WY$ satisfies \eqref{E:Gibbs-LLN-pre}. Define operators $\Lambda_k:\sD^n\to \sD^n$ by $\Lambda_k f(t) = f(kt)/k$. By the law of large numbers, as $k\to \infty, \Lambda_k Y(t)\to y(t):=dt$ uniformly on compact sets. Since $W$ is continuous with respect to the uniform-on-compact topology and commutes with $\Lambda_k$ by definition of last passage, we have
$$
\lim_{k \to \infty} \Lambda_k WY(1)=\lim_{k \to \infty} W\Lambda_k Y (1)=Wy(1).
$$ 
Finally, $W$ applied to linear functions just sorts them, so $Wy(t)=d^\circ t$.
\end{proof}

Next, we show that there is a unique Bernoulli Gibbs measure satisfying \eqref{E:Gibbs-LLN-pre} for every possible $d^\circ$.
\begin{theorem}
\label{T:Gibbs-LLN}
For any $d\in [0,1]^n$ with $d_1 \ge \dots \ge d_n$ there is a unique Gibbs measure $\mu_d$ on ordered $n$-tuples of Bernoulli paths in $\sD^n$ so that for $X \sim \mu_d$,
\begin{equation}\label{E:Gibbs-LLN}
\lim_{t \to \infty} WY(t)/t = d \qquad \text{a.s.}
\end{equation}
\end{theorem}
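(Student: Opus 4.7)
The proof divides naturally into existence and uniqueness. For existence, I would let $\mu_d$ be the law of $WY$ where $Y\in\sD^n$ consists of $n$ independent Bernoulli walks with drift vector $d$. Since $d$ is already ordered, its order statistics equal $d$, and Theorem \ref{T:Bernoulli-map} gives that $\mu_d$ is a Bernoulli Gibbs measure with $WY(t)/t\to d$ almost surely. The main work lies in uniqueness.

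Let $\mu$ be any Bernoulli Gibbs measure with the stated LLN and $X\sim\mu$. The crucial first reduction is that, by the Gibbs property, the conditional law of $X|_{[0,T]}$ given $X|_{[T,\infty)}$ equals $\nu_T(X(T))$, depending only on $X(T)$. Hence $\mu$ is completely determined by the family of one-dimensional marginals $p_T:=\mathcal{L}_\mu(X(T))$ for $T\in\N$. Moreover, $(X(T))_{T\in\N}$ is automatically a Markov chain whose backward transition kernel (the marginal of $\nu_{T+1}(x')$ at time $T$) is fixed, independent of $\mu$. Thus uniqueness of $\mu$ reduces to uniqueness of the family $\{p_T\}_T$ given the backward kernel plus the almost-sure LLN $X(T)/T\to d$.

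My plan for pinning down $(p_T)$ is to invert the RSK analysis from the proof of Theorem \ref{T:Bernoulli-map}. Given $X|_{[0,T]}\sim\mu|_{[0,T]}$, sample an auxiliary Gelfand--Tsetlin pattern $G^T$ conditional on $X|_{[0,T]}$ with density proportional to
\begin{equation*}
\prod_{i=1}^n d_i^{Y_i(T;G^T)}(1-d_i)^{T-Y_i(T;G^T)},
\end{equation*}
where $Y_i(T;g)$ denotes the function of $g$ encoding the $i$-th endpoint coordinate via $\sum_{j\le i}Y_j(T;g)=\sum_{j\le i}g_j(i)$. Set $\widehat Y^T:=\operatorname{RSK}_T^{-1}(X|_{[0,T]},G^T)$. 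Running the joint-distribution computation from Theorem \ref{T:Bernoulli-map} in reverse shows that $\widehat Y^T$ has the law of $Y|_{[0,T]}$ for $Y$ independent Bernoulli of drift $d$, \emph{provided} $p_T$ coincides with the corresponding marginal of $WY(T)$. The finite lifts $\{\widehat Y^T\}_T$ are then consistent in $T$ (the Gelfand--Tsetlin weights are compatible under the Gibbs structure, and $\operatorname{RSK}_T^{-1}$ respects truncation), so they glue to a single $\widehat Y\in\sD^n$ distributed as independent Bernoulli walks of drift $d$. Then $X=W\widehat Y$ by the isometry $WM=W$ of Proposition \ref{P:MW-relations}, giving $\mu=\mu_d$.

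The main obstacle is verifying the matching condition $p_T=\mathcal{L}(WY(T))$ itself. The backward Markov kernel is rigid, and the LLN forces the asymptotic drift, but deducing an exact finite-$T$ marginal requires a tail-triviality input. The cleanest route is a Choquet-extremality argument: any Gibbs measure decomposes over its tail $\sigma$-algebra, the event $\{X(T)/T\to d\}$ is tail-measurable, and therefore extremal Gibbs measures with this LLN form a singleton --- necessarily $\mu_d$, since existence supplies one such extremal point. Translating this abstract argument into a direct verification at each fixed $T$ is the real technical hurdle, and is the one place where the proof uses more than just the bijectivity and isometry of the melon map.
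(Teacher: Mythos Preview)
Your existence argument matches the paper exactly. The uniqueness argument, however, has a genuine gap that you yourself flag.

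The inverse-RSK construction is circular: you cannot sample $G^T$ with the biased density and conclude that $\widehat Y^T$ has the law of independent drift-$d$ Bernoulli walks unless you already know that the marginal $p_T=\mathcal L_\mu(X(T))$ agrees with $\mathcal L(WY(T))$. But that equality of marginals is precisely the uniqueness statement you are trying to prove. So this paragraph does no work.

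The Choquet/extremality sketch does not close the gap either. Decomposing $\mu$ over its tail $\sigma$-algebra and noting that $\{X(T)/T\to d\}$ is tail-measurable only tells you that $\mu$ is a mixture of extremal Gibbs measures, each of which satisfies the same LLN almost surely. It does not tell you that there is only \emph{one} extremal Gibbs measure with that LLN; that is exactly the question. To finish along these lines you would need an additional injectivity statement (distinct extremal measures have distinct asymptotic slopes), which you have not supplied and which is not easier than the original problem.

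The paper's proof uses a completely different and much more concrete idea: a \emph{sandwich via monotone coupling}. One perturbs the drift to strictly separated vectors $\underline d < d < \bar d$, takes $\underline X\sim\mu_{\underline d}$ and $\bar X\sim\mu_{\bar d}$, and uses the LLN to force $\underline X(t)\le X(t)\le \bar X(t)$ with probability tending to $1$. A monotone-coupling lemma for the bridge measures $\nu_t$ (from \cite{CH}) then converts the endpoint ordering into stochastic domination of the restricted paths: $\underline X|_{[0,s]}\preceq X|_{[0,s]}\preceq \bar X|_{[0,s]}$. Finally one sends the perturbation $\epsilon\to 0$ and uses continuity of $d\mapsto\mu_d$ in total variation to squeeze $X|_{[0,s]}$ to $\mu_d|_{[0,s]}$. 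This argument uses neither tail triviality nor any inverse-RSK bookkeeping; the key external input is the monotone coupling for nonintersecting Bernoulli bridges.
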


\begin{proof} 
Let $\mu_d$ denote the law of $WY^d$, where $Y^d$ is a Bernoulli walk of drift $d = (d_1 \le \dots \le d_n)$.

Now let $X$ be a sample from an arbitrary Bernoulli Gibbs measure satisfying \eqref{E:Gibbs-LLN}. To show that $X\sim \mu_d$, it suffices check that $X|_{[0, s]} \eqd WY^d|_{[0, s]}$ for all $s \in \N$.
Let $\eps>0$ and $v=(n,n-1,\ldots,1)$ and set
$$
\bar d_i=(d_i+\eps v_i)\wedge 1,\qquad \underline d_i=(d_i-\eps v_i)\vee 0, \qquad\bar X\sim \mathcal \mu_{\bar d}, \qquad {\underline X}\sim \mathcal 
\mu_{\underline d}.
$$ 
When $\underline d_i<d_i$, by \eqref{E:Gibbs-LLN}, for all $i$ we have
$\p(\underline X_i(t)\le X_i(t))\to 1$ as $t\to\infty$. Otherwise $d_i=\underline d_i=0$, but in this case $\underline X_i=0$ as well, so $\underline X_i\le X_i$ a.s.  Thus, after a symmetric upper bound, we get that
\begin{equation}\label{E:order}
\p A_t \to 1 \qquad \text{as } t \to \infty, \qquad \text{ where } A_t = \{\underline X(t)\le X(t)\le \overline X(t)\}.
\end{equation}
Now, the proof of Lemmas 2.6/2.7 in \cite*{CH} shows that if $x \le x'$ coordinatewise, $t \in \N$, and $Y \sim \nu_t(x), Y' \sim \nu_t(x')$ then we have the stochastic dominance $Y \preceq Y'$, so there exists a coupling with $Y_i(t) \le Y'_i(t)$ for all $i, t$. Thus, for $t \in \N$, on the event $A_s$ for $s \ge t$ we have
\begin{equation}
\label{E:At-have}
\underline X|_{[0, t]} \preceq X|_{[0, t]} \preceq \overline X|_{[0, t]}.
\end{equation}
As $s\to\infty$, since $\p A_s \to 1$ this implies \eqref{E:At-have} unconditionally.
Now let $\eps\to 0$. The laws $\mu_d$ restricted to $[0,t]$ are continuous in $d$ in the total variation norm, since the laws of $Y^d|_{[0, t]}$ are themselves continuous in $d$ in total variation. Therefore \eqref{E:At-have} holds even when $\eps=0$. In this case $\underline X$ and $\overline X$ both have distribution $\mu_d$ restricted to $[0, t]$, and hence so does $X|_{[0, t]}$, as required. 
\end{proof}

Theorems \ref{T:Bernoulli-map} and \ref{T:Gibbs-LLN} and Proposition \ref{P:W=w}(i) yield the following immediate corollary.
\begin{corollary}[Metric Burke property]\label{c:metric_burke}
Last passage percolation across an $n$-dimensional Bernoulli walk ignores the order of the drift vector. More precisely, if $Y, Z \in \sD^n$ are Bernoulli walks with drifts $d, e$ satisfying $d^\circ = e^\circ$, then $WY\eqd WZ$, and as functions of $x\le y$ we have
$$
Y[(x,n)\to (y,1)]\;\eqd\;Z[(x,n)\to (y,1)].
$$
\end{corollary}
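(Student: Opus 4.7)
The plan is to chain together the two preceding theorems on Bernoulli walks together with the isometry property of $W$. Both conclusions should follow almost immediately once the bookkeeping is set up.

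First I would apply Theorem \ref{T:Bernoulli-map} separately to $Y$ and $Z$. This gives that $WY$ has the law of a Bernoulli Gibbs measure satisfying $\lim_{t\to\infty} WY(t)/t = d^\circ$ almost surely, and similarly $WZ$ has the law of a Bernoulli Gibbs measure satisfying $\lim_{t\to\infty} WZ(t)/t = e^\circ$ almost surely. Since we are told $d^\circ = e^\circ$, both $WY$ and $WZ$ are Bernoulli Gibbs measures with the same limiting slope vector.

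Next I would invoke Theorem \ref{T:Gibbs-LLN}, which asserts that for each ordered slope vector there is a \emph{unique} Bernoulli Gibbs measure with that law of large numbers. Applied to the common vector $d^\circ = e^\circ$, this forces $WY \stackrel{d}{=} WZ$ as elements of $\sD^n$, which is the first assertion of the corollary.

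Finally, for the statement about last passage values, I would use the isometry in Proposition \ref{P:W=w}(i), which says $f[(x,n)\to(y,1)] = Wf[(x,n)\to(y,1)]$ for every $f \in \sD^n$. Applying this pointwise in $(x,y)$ to $Y$ and $Z$, the equality in law of the random functions $WY$ and $WZ$ transfers to equality in law of the random functions $(x,y) \mapsto Y[(x,n)\to(y,1)]$ and $(x,y) \mapsto Z[(x,n)\to(y,1)]$, since these are deterministic functionals of $WY$ and $WZ$ respectively. There is no real obstacle here; the only mild point to be careful about is that we want joint equality in law as functions of $(x,y)$, not just marginal equality for each pair, but this is automatic since both sides are measurable functionals of the melons.
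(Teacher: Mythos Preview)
Your proposal is correct and is exactly the argument the paper has in mind: it states the corollary as an immediate consequence of Theorems \ref{T:Bernoulli-map} and \ref{T:Gibbs-LLN} together with Proposition \ref{P:W=w}(i), and you have simply written out the details.
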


Burke's theorem normally refers to a certain invariance between arrivals and departures in a queuing processes; Corollary \ref{c:metric_burke} is a kind of Burke property because it shows an invariance in the last passage value under exchanging the rows of the underlying environment. See \cite{o2002representation}.

The proof framework in this section goes through essentially verbatim if we start with a vector of independent geometric random walks, as in Example \ref{Ex:restrictions}.4. In this setting, the random walks $Y$ are embedded in $\sD^n$ as pure-jump paths with jumps at integer times. The Pitman ordering condition on $WY$ means that the output is a vector of geometric walks conditioned so that $WY_i(t) \ge WY_{i+1}(t + 1)$ for all $i, t$. Measure-preservation for the remaining three examples in Example \ref{Ex:restrictions} can be deduced by standard limiting procedures. We leave the details of this to the interested reader.

\section{Embedding classical versions of RSK}
\label{S:embedding}
	
	In this section, we relate our RSK map $\operatorname{RSK}_t$ to the usual RSK and dual RSK correspondences for nonnegative matrices. These correspondences are connected to last passage percolation in the lattice $\Z^2$. We start with the connection to the standard RSK correspondence.
	
	\subsection{Young tableaux}
	\label{S:Young-tableaux}
	We recall some basic combinatorial objects, see e.g. \cite{stanley2} for a detailed reference. A \textbf{partition} is a weakly decreasing sequence $\la = (\la_1, \la_2, \dots \la_k)$ of positive integers. The \textbf{size} of the partition is $|\la| = \sum_{i=1}^k \la_i$. To any partition $\la$, the \textbf{Young diagram} associated to $\la$ is the set of squares $Y(\la) = \{(i, j) \in \Z^2 : 1 \le i \le \la_j \}$. 
	A \textbf{semistandard Young tableau} of shape $\la$ is a filling of the corresponding Young diagram with positive integers such that the entries are strictly increasing along columns and weakly increasing along rows. 
	
	There is a natural correspondence between Young tableaux and Pitman ordered cadlag paths with only integer-valued positive jumps at positive integer times as in Example \ref{Ex:restrictions}.4. Consider a Young tableau $T$ of shape $\la = (\la_1, \la_2, \dots \la_k)$. Define $w \in \sD_\uparrow^k$ by setting
	$$
	w_i(t) = \text{$\#$ of entries in row $i$ of $T$ that are $\le t$}.
	$$
	In other words, the path $w_i$ has jumps precisely at the times $t$ which are equal to the entries of the $i$-th row of $T$. It is straightforward to check that with this definition, each $w_i$ is a cadlag path with positive integer jumps at integer times. The fact that the entries of $T$ are  strictly increasing along columns implies that $w_i(t^-) \ge w_{i+1}(t)$ for all $i, t$, and so the $w \in \sD^k_\uparrow$. This map from Young tableaux to Pitman ordered paths on this space is invertible. Moreover, for $n > k$ we can extend the collection $(w_1, \dots, w_k)$ to a collection $(w_1, \dots, w_n)$ of Pitman ordered paths by setting $w_i = 0$ for $i > k$.
	
	There is also a well-known correspondence between Young tableaux and Gelfand-Tsetlin patterns with nonnegative integer entries. Namely, for a Young tableau $T$ of shape $\la$ whose largest entry is less than or equal to $m$, define a Gelfand-Tsetlin pattern $g = \{g_i(j) : i \le j \le m\}$ by setting $g_i(j)$ to be equal to the number of entries in row $i$ of $T$ that are less than or equal to $j$. 

\subsection{Classical RSK via Greene's theorem} 	
	The RSK correspondence is a map between the space of nonnegative matrices with integer entries and pairs of semistandard Young tableaux $(P, Q)$ of equal shape. Typically it is described using a local bumping algorithm. However, the RSK bijection can alternately be described using last passage percolation. For the restriction of RSK to permutation matrices (the Robinson-Schensted correspondence) this is due to \cite*{greene1974extension}. A version of Greene's theorem for RSK is also well-known, but appears to be folklore and we do not know of an original reference. See, for example, Theorem 24 in \cite*{hopkins2014rsk} or \cite*{krattenthaler2006growth}, Theorem 8.
	
In the following, we describe RSK based on this connection with Greene's theorem in the language of last passage values. For two points $p = (x, n), q = (y, m) \in \Z^2$ with $x \le y$ and $n \ge m$, we say that a sequence of vertices $\pi = (\pi_1 = p, \dots, \pi_k = q)$ is a \textbf{directed path from $p$ to $q$} if $\pi_i - \pi_{i-1} \in \{(1, 0), (0, -1)\}$ for all $i$. For an array $A = \{A_u : u \in \Z^2\}$ of nonnegative numbers, we can define the weight of any path $\pi$ from $p$ to $q$ by
	\begin{equation}
	\label{E:lattice-weight}
	|\pi|_A= \sum_{v\in \pi} A_v.
	\end{equation}
	We also define the last passage value 
\begin{equation}
\label{E:LPP-lat}
	A[p\to q]= \max_{\pi} |\pi|_A,
	\end{equation}
	where the maximum is taken over all possible paths $\pi$ from $p$ to $q$. More generally, for vectors $\bp = (p_1, \dots, p_k), \bq = (q_1, \dots, q_k)$, define the {\bf multi-point last passage value}
	\begin{equation}
	\label{E:LPP-k}
	A[\bp \to \bq] =
	\max_{\pi_1,\ldots, \pi_k}|\pi_1|_A+\cdots +|\pi_k|_A
	\end{equation}
	where the maximum now is taken over all possible $k$-tuples of \emph{disjoint} paths, where each $\pi_i$ is a path from $p_i$ to $q_i$. This is defined so long as a disjoint $k$-tuple exists. We also introduce the shorthand $A[p^{*k} \to q^{*k}]$ for the $k$-point last passage value from
	$$
	(p - (0, k-1), \dots, p - (0, 1), p) \to (q, q + (1, 0), \dots, q + (k-1, 0)).
	$$
	The value $A[p^{*k} \to q^{*k}]$ is best thought of as a last passage value with $k$ disjoint paths from $p$ to $q$, hence the similar notation to the corresponding object in the cadlag setting. We are forced to stagger the start and end points of the paths to allow for disjointness.
	
	Now for an $n \X m$ matrix of nonnegative integers $A$ (equivalently, a restriction of a nonnegative array to the set $\{1, \dots, m\} \X \{1, \dots, n\}$), we can define a semistandard Young tableau, called the {\bf recording tableau} $Q$  with at most $n \wedge m$ rows and entries in $\{1, \dots, m\}$ by letting
	$$
	A[(1, n)^{*k \wedge i} \to (i, 1)^{*k \wedge i}] = \text{$\#$ of entries in rows $1, \dots, k$ of $Q$ that are $\le i$}.
	$$ 
	Similarly, define a semistandard Young tableau, called the {\bf insertion tableau} $P$ with at most  $n \wedge m$ rows and entries in $\{1, \dots, n\}$ by letting
	$$
	A[(1, n)^{*k \wedge i} \to (m, n - i + 1)^{*k \wedge i}] = \text{$\#$ of entries in rows $1, \dots, k$ of $P$ that are $\le i$}.
	$$ 
	The RSK correspondence is the map $A \mapsto (Q, P)$. Observe that with these definitions $P$ and $Q$ have the same shape determined by the last passage values $A[(1, n)^{*k} \to (m, 1)^{*k}], k = 1, \dots, m \wedge n$. By the correspondences between semistandard Young tableaux and Pitman ordered collections of cadlag paths and Gelfand-Tsetlin patterns we can associate to $(Q, P)$ a pair $(W A, G A) \in \sG^n_m$. Unravelling the bijections in Section \ref{S:Young-tableaux}, we get that for all $t \in [1, m]$ and $k \in \{1, \dots, n\}$, we have
	\begin{equation}
	\label{E:j1kkW}
	\sum_{j=1}^k W A_j(t) = A[(1, n)^{*k \wedge \fl{t}} \to (\fl{t}, 1)^{*k \wedge \fl{t}}],
	\end{equation}
	and for $t < 1$, we have $WA_j(t) = 0$. Also, for $1 \le k \le i \le n$ we have
	\begin{equation}
	\label{E:j1kkM}
	\sum_{j=1}^k G A_j(i) = A[(1, n)^{*k} \to (m, n - i + 1)^{*k}].
	\end{equation}
	\subsection{Classical RSK and the melon map}
	\label{SS:latticecadlag}
	
	
	For a nonnegative $n \X m$ matrix $A$, define $f^A \in \sD^n_m$ by
	\begin{equation}
	\label{E:fM-def}
	f^A_k(0^-) = 0, \qquad \mathand \qquad f^A_k(t) - f^A_k(s) = \sum_{r \in (s, t]} A_{r, k}.
	\end{equation}
	We will show that discrete last passage values across $A$ equal last passage values across $f^A$.
	\begin{prop}
		\label{P:fG}
		For all tuples of points $\bp, \bq$ such that $A[\mathbf{p}\to \mathbf{q}]$ is defined, we have
		\begin{equation}
		\label{E:fG2}
		A[\mathbf{p}\to \mathbf{q}] = f^A[\mathbf{p}\to \mathbf{q}],
		\end{equation}
	\end{prop}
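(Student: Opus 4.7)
The plan is to establish the two inequalities $A[\bp \to \bq] \le f^A[\bp \to \bq]$ and $f^A[\bp \to \bq] \le A[\bp \to \bq]$ separately. The underlying observation is that by construction $df^A$ is purely atomic, with an atom of mass $A_{r,k}$ at each $(r,k) \in \{1, \dots, m\} \X \{1, \dots, n\}$ and no mass elsewhere; in particular, the $f^A$-length of any cadlag path depends only on which integer columns its horizontal segments cover.

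The easy direction $A[\bp \to \bq] \le f^A[\bp \to \bq]$ follows by recasting any disjoint lattice $k$-tuple $\sigma$ as a cadlag $k$-tuple. Each lattice path $\sigma^{(j)}$ becomes the union of horizontal closed intervals spanning the consecutive integer columns it visits on each row. The resulting tuple is essentially disjoint, and atomicity gives $df^A(\cup \sigma) = \sum_{v \in \cup \sigma} A_v = |\sigma|_A$. Maximizing over $\sigma$ yields the inequality.

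For the reverse inequality, I would start from a cadlag disjoint optimizer $\pi$ for $f^A[\bp \to \bq]$, which exists by Lemma \ref{L:multipath-exist}, and produce a disjoint lattice $k$-tuple $\sigma$ with $|\sigma|_A \ge |\pi|_{f^A}$. The driving fact is that any jump time $t = t^{(j)}_i$ of $\pi$ lying strictly inside a gap $(a, a+1)$ between consecutive integers may be shifted freely within that gap without changing the weight, since no atom of $df^A$ is crossed. Iteratively replacing each non-integer jump time by a neighboring integer produces a $k$-tuple $\pi'$ with integer jump times and $|\pi'|_{f^A} \ge |\pi|_{f^A}$; such a $\pi'$ corresponds canonically to a disjoint lattice $k$-tuple $\sigma$ whose vertices are exactly the integer columns covered by $\pi'$ on each row, and $|\sigma|_A = |\pi'|_{f^A}$ by atomicity.

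The main obstacle is preserving essential disjointness of the $k$-tuple during the integer-rounding step. Two adjacent paths $\pi^{(j)}, \pi^{(j+1)}$ of the optimizer can share a non-integer jump time $t$ at the common boundary of their row-$i$ intervals, and naively rounding both to the same integer produces lattice paths sharing a vertex. I would resolve this by working with the leftmost disjoint optimizer (Lemma \ref{L:leftmost-exist}) and rounding contested jump times asymmetrically, with the left path rounded down to $\lfloor t \rfloor$ and the right path rounded up to $\lceil t \rceil$. Nonnegativity of the entries of $A$ ensures that the extra integer columns picked up by such an asymmetric shift only increase the lattice weight, so the procedure cannot decrease $|\pi|_{f^A}$. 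After finitely many such adjustments the $k$-tuple has only integer jump times and is a genuine disjoint lattice $k$-tuple, completing the proof.
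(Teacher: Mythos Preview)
Your easy direction and the idea of rounding jump times to integers are fine, and they match the paper. The gap is in the reverse inequality, specifically in the claim that after rounding, the cadlag $k$-tuple $\pi'$ ``corresponds canonically to a disjoint lattice $k$-tuple.'' This is false in general: an essentially disjoint cadlag $k$-tuple with all-integer jump times need \emph{not} yield vertex-disjoint lattice paths. For instance, if $\pi_j$ occupies row $i$ on $[a,b]$ and $\pi_{j+1}$ occupies row $i$ on $[b,c]$ with $b\in\Z$, the cadlag paths meet only at the single point $(b,i)$ and are essentially disjoint, yet both of the associated lattice paths contain the vertex $(b,i)$. Your asymmetric rounding rule only treats the case of a shared \emph{non-integer} jump time; it says nothing about this situation, nor about two distinct non-integer jump times in the same unit interval that round toward each other. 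Pushing one path off the shared vertex costs an atom $A_{b,i}$ and gains some other atom, and there is no reason the trade is nonnegative; making it work amounts to proving a nontrivial combinatorial statement.

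The paper avoids this entirely by first proving Lemmas~\ref{L:staircase} and~\ref{L:discrete-end}, which show that $A[\bp\to\bq]$ equals the maximum of $\sum_{v\in\bigcup_i\pi_i}A_v$ over \emph{arbitrary} (not necessarily disjoint) $k$-tuples of lattice paths, with shared vertices counted once. With this in hand, the reverse inequality is immediate: take any cadlag optimizer, floor every jump time to get a possibly-overlapping lattice $k$-tuple with the same union weight, and apply the lemma. The combinatorial content you are missing---that overlaps can always be resolved without loss---is exactly what Lemma~\ref{L:staircase} proves via a minimal-element argument in the order-statistics partial order on $k$-tuples.
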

To prove Proposition \ref{P:fG}, we will show that lattice last passage values can be equivalently defined using unions of possibly overlapping paths.
	
We first prove this for endpoints that lie in a packed staircase configuration.
	
	\begin{lemma}
		\label{L:staircase}
		Let $\bp, \bq$ be such that $p_i = p_{i-1} + (1, 1)$ and $q_i = q_{i-1} + (1, 1)$ for all $i$. Then
		\begin{equation}
		\label{E:Gbpq}
		A[\bp \to \bq] = \max_{\pi_1, \dots, \pi_k} \sum_{v \in \bigcup_{i} \pi_i} A_v, 
		\end{equation}
		where the maximum is over all $k$-tuples of paths $\pi_i$ from $p_i$ to $q_i$, without any disjointness condition enforced. In the union in \eqref{E:Gbpq}, weights on multiple paths are only counted once.
	\end{lemma}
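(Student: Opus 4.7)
The inequality $A[\bp \to \bq] \le \max_{\pi_1,\dots,\pi_k} \sum_{v \in \cup_i \pi_i} A_v$ is immediate: for any disjoint $k$-tuple the two quantities $\sum_i |\pi_i|_A$ and $\sum_{v \in \cup_i \pi_i} A_v$ agree, so the disjoint supremum sits below the overlapping one. My plan is therefore to concentrate on the reverse inequality, and to reduce it to the following set-theoretic claim: given any $k$-tuple of paths $(\pi_1,\dots,\pi_k)$ from $\bp$ to $\bq$, there exists a \emph{disjoint} $k$-tuple $(\sigma_1,\dots,\sigma_k)$ from $\bp$ to $\bq$ with $\bigcup_i \sigma_i \supseteq \bigcup_i \pi_i$. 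Because $A \ge 0$, this containment upgrades automatically to $\sum_{v \in \cup \sigma_i} A_v \ge \sum_{v \in \cup \pi_i} A_v$, and taking the supremum yields the desired inequality.

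The geometric content of the claim is that the staircase hypotheses $p_j = p_i + (j-i)(1,1)$ and $q_j = q_i + (j-i)(1,1)$ leave enough room along the diagonal direction for the higher-indexed paths to avoid the lower-indexed ones. The key observation is that lattice paths are monotone with respect to the anti-diagonal level $\ell(x,y) = x - y$: every $(1,0)$ or $(0,-1)$ step increases $\ell$ by exactly $1$, so a path from $p_i$ to $q_i$ meets each anti-diagonal $L_\ell = \{(x,y) : x-y = \ell\}$ in exactly one vertex. Under the staircase hypothesis all $p_i$'s lie on a single anti-diagonal $L_{\ell_0}$ and all $q_i$'s lie on a single anti-diagonal $L_{\ell_1}$, so $\bigcup_i \pi_i$ meets each intermediate $L_\ell$ in at most $k$ vertices, leaving $k$ free slots on every anti-diagonal. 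The plan is then to build $(\sigma_i)$ by sweeping from $L_{\ell_0}$ to $L_{\ell_1}$ one anti-diagonal at a time, advancing $k$ disjoint path-fronts that collectively hit every vertex of $\bigcup_i \pi_i$ on the current anti-diagonal and pad with vertices outside $\bigcup_i \pi_i$ whenever fewer than $k$ union-vertices are available.

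The main obstacle will be verifying that this sweep always extends: one must check that the $k$ current fronts can simultaneously be pushed forward one step so that they remain vertex-disjoint, absorb the full union on each anti-diagonal, and land exactly on $q_1,\dots,q_k$ in the correct order at level $\ell_1$. I expect that the cleanest way to circumvent this bookkeeping is to first settle the $\{0,1\}$-valued case of \eqref{E:Gbpq}, where it reduces to the classical theorem of \cite*{greene1974extension} for $0$-$1$ matrices and unions of increasing subsequences, then lift to nonnegative integer $A$ by splitting each entry $A_u = a$ into $a$ unit atoms placed along a short diagonal segment starting at $u$ (this operation is staircase-compatible and leaves both sides of \eqref{E:Gbpq} unchanged), and finally extend to arbitrary nonnegative real $A$ by continuity of both sides in $A$.
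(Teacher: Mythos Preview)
Your reduction to the set-theoretic claim --- that any $k$-tuple $(\pi_1,\dots,\pi_k)$ from $\bp$ to $\bq$ is covered by some \emph{disjoint} $k$-tuple --- is exactly the right target, and it is in fact what the paper proves. But neither of the two arguments you propose for it is complete. The sweep you sketch is explicitly abandoned (``the main obstacle will be verifying that this sweep always extends''), so no proof is given there. The Greene-based fallback has a real gap beyond circularity: Greene's theorem concerns unions of $k$ increasing subsequences (chains of $1$'s) in a word or permutation, where ``disjoint'' means disjoint as sets of entries, not vertex-disjoint lattice paths with prescribed staircase endpoints $p_i,q_i$. Two subsequences that share no $1$'s may still force intersecting paths, and conversely the constraint of reaching fixed $p_i,q_i$ is absent from Greene's statement. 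You never bridge this mismatch, and the atom-splitting trick you invoke to pass from $\{0,1\}$ to integer entries does not obviously preserve the staircase structure of $\bp,\bq$ either.

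The paper's argument is a direct and short proof of your set-theoretic claim, avoiding any external input. Rotate $45^\circ$ so that paths become graphs of $\pm 1$-step walks $\hat\pi_i$ on a common time interval, with starts and ends spaced by $2$. Pass to order statistics so that $\hat\pi_1 \le \cdots \le \hat\pi_k$ pointwise; this does not change the union. Among all ordered $k$-tuples achieving the maximal union weight, take one minimal in the pointwise partial order. If it is not disjoint, there is a lowest index $i$ and a maximal interval $I$ on which $\hat\pi_i = \hat\pi_j$ for some $j>i$; replacing $\hat\pi_i$ by $\hat\pi_i - 2$ on $I$ gives another ordered $k$-tuple whose union \emph{contains} the old one (the removed vertices are still covered by $\hat\pi_j$) but which is strictly smaller --- a contradiction. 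This push-down is exactly the missing mechanism your sweep was groping for, and it delivers the containment $\bigcup\sigma_i \supseteq \bigcup\pi_i$ you wanted without any appeal to Greene.
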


\begin{figure}[ht]
\centering
\includegraphics[width=0.5\textwidth]{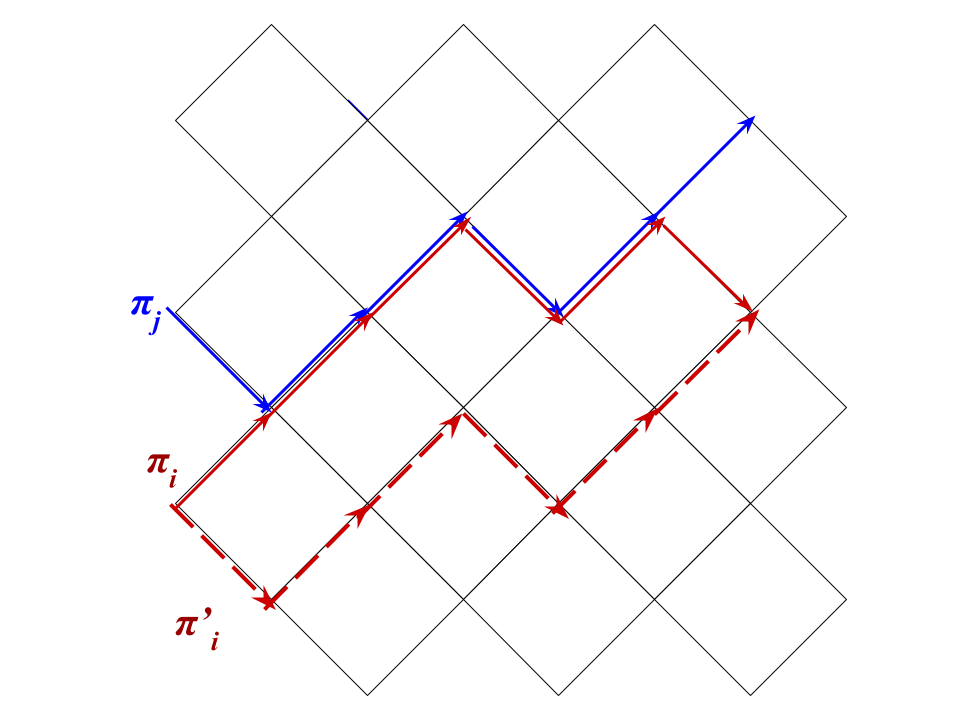}
\caption{Example of how the overlapping paths in the proof of Lemma \ref{L:staircase} are moved down to find a better configuration of non-intersecting paths.}
\label{fig:paths}
\end{figure}

For the proof, it will be easier to imagine the  coordinate system as rotated clockwise by $45$ degrees, and scaled up by $\sqrt{2}$ as in Figure \ref{fig:paths}. After this rotation, all the points $p_i$ lie on a common vertical line $\Z \X \{p^*\}$. Similarly, all the points $q_i$ lie on a common vertical line $\Z \X \{q^*\}$. Moreover, with this rotation any path $\pi$ from $p_i$ to $q_i$ for some $i$ gets transformed to the graph of a function $\hat \pi:\{p^*, \dots, q^*\} \to \Z$ with steps of $\pm 1$. That is, $\pi$ gets transformed to a simple random walk path $\hat \pi$.
	
	
	\begin{proof}
		The fact that $\operatorname{LHS} \le \operatorname{RHS}$ in \eqref{E:Gbpq} follows since we are maximizing over a smaller set on the left. To achieve the opposite inequality, we just need to show that there is a set of disjoint paths $\pi_i$ that achieves the maximum on the right side of \eqref{E:Gbpq}. 
		Without loss of generality, by passing to order statistics, we may assume that the maximum is achieved on a $k$-tuple of paths $\tau$ satisfying
		\begin{equation}
		\label{E:tau-1}
		\hat \tau_1(x) \le \hat \tau_2(x) \dots \le \hat \tau_k(x),
		\end{equation}
		for all $x =p^*,p^*+1,\ldots , q^*$. 
		
		Now consider the set $S$ of all $k$-tuples which achieve the maximum in \eqref{E:Gbpq} and satisfy \eqref{E:tau-1}. We put a partial order on this set by saying that $\pi \le \tau$ if $\hat \pi_i(x) \le \hat \tau_i(x)$ for all $i \in \{1, \dots, k\}, x = p^*,p^*+1,\ldots , q^*$. Let $\pi$ be a minimal element of the finite set $S$.  We show that $\pi$ consists of disjoint paths.
		
%
%
		
		Suppose not. Then there exists an $i < j$ and a value $x$ such that $\hat \pi_i(x) = \hat \pi_j(x)$. We may also assume that $i$ is the minimal such index where there is such a conflict, and hence that
		\begin{equation}
		\label{E:hatpiell}
\hat \pi_i(x) - 2\ge \hat \pi_\ell(x) \qquad \text{ for all } i > \ell.
		\end{equation}
		 Let $I = \{a, \dots, b\} \sset \{p^*, \dots, q^*\}$ be the largest interval containing $x$ such that $\hat \pi_i = \hat \pi_j$ on $I$. Since the start and endpoints of $\hat \pi_i, \hat \pi_j$ are distinct, we have $p^* < a, b < q^*$. Therefore $\hat \pi_i, \hat \pi_j$ are well-defined at $a-1$ and $b+1$ and satisfy
		 $$
		 \hat \pi_i(a-1) = \hat \pi_j(a - 1) - 2, \qquad \hat \pi_i(b+1) = \hat \pi_j(b + 1) - 2.
		 $$ 
		 Therefore the function $\hat \pi_i'$ which is equal to $\hat \pi_i$ on $I^c$, and shifted down by 2 units, $\hat \pi^\prime_j = \hat \pi_j - 2$, on $I$ is also a simple random walk path, see Figure \ref{fig:paths}. Thus the $k$-tuple $\pi' = (\pi_1, \dots, \pi_i', \pi_{i+1}, \dots, \pi_k)$ also consists of paths from $p_i$ to $q_i$. Moreover, the vertices covered by $\pi'$ contain all the vertices covered by $\pi$, so because the weights are all non-negative, $\pi'$ must also achieve the maximum in \eqref{E:Gbpq}. Finally, by \eqref{E:hatpiell}, the $k$-tuple $\pi'$ still satisfies inequalities in \eqref{E:tau-1}, so $\pi' \in S$. On the other hand, $\pi' \le \pi$ by construction, contradicting the minimality of $\pi$.
	\end{proof}
	
	We can now extend this to general endpoints.
	
	\begin{lemma}
		\label{L:discrete-end}
	For any $\bp, \bq$ such that $A[\bp \to \bq]$ is defined, we have
		\begin{equation}
		\label{E:Gbpq2}
		A[\bp \to \bq] = \max_{\pi_1, \dots, \pi_k} \sum_{v \in \bigcup_{i} \pi_i} A_v, 
		\end{equation}
		where the maximum is over all $k$-tuples of paths $\pi_i$ from $p_i$ to $q_i$, without any disjointness condition enforced.
	\end{lemma}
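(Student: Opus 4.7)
The inequality $\le$ in \eqref{E:Gbpq2} is immediate since disjoint $k$-tuples are particular $k$-tuples of paths. For the reverse inequality, my plan is to reduce to the staircase case of Lemma \ref{L:staircase} by embedding $(A, \bp, \bq)$ into a larger lattice equipped with zero padding and staircase endpoints.

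Concretely, enlarge $A$ to an array $A'$ on a sufficiently large rectangular region of $\Z^2$, with $A'_v = A_v$ on the original support and $A'_v = 0$ elsewhere. For a large constant $T$, fix staircase endpoints $\bp' = (p'_1, \dots, p'_k)$ and $\bq' = (q'_1, \dots, q'_k)$ with $p'_i = p'_{i-1} + (1,1)$ and $q'_i = q'_{i-1} + (1,1)$, placing $\bp'$ far to the upper-left and $\bq'$ far to the lower-right of the support of $A$. For each $i$, fix canonical directed paths $\alpha_i$ from $p'_i$ to $p_i$ and $\beta_i$ from $q_i$ to $q'_i$ lying entirely in the zero-padded region, arranged so that the families $\{\alpha_i\}$ and $\{\beta_i\}$ are each mutually disjoint (this can be achieved by routing each $\alpha_i, \beta_i$ along its own distinct diagonal in the padding).

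The map $\pi \mapsto (\alpha_i \cup \pi_i \cup \beta_i)_i$ lifts $k$-tuples from $\bp$ to $\bq$ in $A$ to $k$-tuples from $\bp'$ to $\bq'$ in $A'$, preserving both weight (the padding is zero-weight) and disjointness. Conversely, any weight-maximizing $k$-tuple from $\bp'$ to $\bq'$ in $A'$ can be modified within the zero-padded region—without changing weight or breaking disjointness—so that it factors through the canonical $\alpha_i, \beta_i$, hence restricts to a $k$-tuple from $\bp$ to $\bq$ in $A$. It follows that both sides of \eqref{E:Gbpq2} for $(A, \bp, \bq)$ coincide with the corresponding quantities for $(A', \bp', \bq')$, which are equal to each other by Lemma \ref{L:staircase}.

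The main obstacle is rigorously carrying out the converse rerouting step: showing that an optimal $k$-tuple on $A'$ can always be arranged so that its $i$-th path enters the original support at $p_i$ and exits at $q_i$, matching the pairing in $(\bp, \bq)$. This is handled by a pairing argument based on the staircase ordering of $\bp'$ and $\bq'$: disjoint paths from a staircase source to a staircase sink respect the induced left-to-right ordering, and for $T$ large enough this ordering uniquely aligns the $i$-th path with the endpoint pair $(p_i, q_i)$; the zero-weight padding then supplies enough room to homotope the path onto $\alpha_i \cup \beta_i$ in the padding without cost.
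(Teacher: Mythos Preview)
Your reduction to the staircase case has the right overall shape, but the zero padding is exactly where it breaks: nothing in your setup forces an optimal path from $p'_i$ to $q'_i$ to enter and exit the support of $A$ at $p_i$ and $q_i$. A one-line counterexample: take $A$ supported on $\{1,2,3\}\times\{1\}$ with all entries equal to $1$, and $k=1$ with $p_1=q_1=(2,1)$. Then $A[\bp\to\bq]=1$, but with zero padding and $p'_1, q'_1$ placed far away, a single path from $p'_1$ to $q'_1$ can sweep through all three nonzero cells, giving $A'[\bp'\to\bq']=3$. Both the disjoint and the union-max values change under your embedding, so the reduction collapses. Your proposed ``rerouting in the zero-padded region'' cannot repair this: the problem is where the path enters and leaves the support of $A$, and any rerouting \emph{inside} the support changes the weight. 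The ordering/pairing argument you sketch only controls the relative left--right order of the paths, not the specific lattice points they hit.

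The paper's proof uses the same staircase embedding but with one additional idea: instead of padding with zeros, it places a \emph{large} weight $s$ on each vertex of fixed disjoint connector paths $\pi$ from $\bp'$ to $\bp$ and $\tau$ from $\bq$ to $\bq'$, setting $A' = A + sH$ where $H$ is the indicator of $\cup\pi\cup\tau$. For $s$ large enough, any optimizer (disjoint or union-max) from $\bp'$ to $\bq'$ is forced to collect every one of these heavy vertices, which pins it to the connectors and hence to the prescribed endpoints $p_i, q_i$. Both sides of \eqref{E:Gbpq2} then shift by the same constant $ms$ with $m=|\cup\pi\cup\tau|$, and Lemma~\ref{L:staircase} finishes the job.
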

	
	\begin{proof} 
		We can find a pair of vectors $(\bp', \bq')$ that are of the form in Lemma \ref{L:staircase} such that there are sets of disjoint paths $\pi$ from $\bp'$ to $\bp$ and $\tau$ from $\bq$ to $\bq'$. Let $H$ be a nonnegative array which is equal to $1$ for $x \in \cup \pi \cup \tau$, and zero otherwise, and let $A' = A + sH$. Then for large enough $s$, letting $m = |\cup \pi \cup \tau|$ we have
		\begin{equation}
		\label{E:G'}
		A'[\bp' \to \bq'] = ms + A[\bp \to \bq],
		\end{equation}
		since any optimal disjoint paths from $\bp'$ to $\bq'$ will necessarily follow $\pi$ and $\tau$. By Lemma \ref{L:staircase}, we similarly have that
		\begin{equation}
		\label{E:msR}
		A'[\bp' \to \bq'] = ms + R,
		\end{equation}
		where $R$ denotes the right hand side of \eqref{E:Gbpq2}. Equating \eqref{E:G'} and \eqref{E:msR} completes the proof. 
	\end{proof}
	
	\begin{proof}[Proof of Proposition \ref{P:fG}]
		Any disjoint lattice paths from $\bp$ to $\bq$ can be mapped to disjoint cadlag paths, so we have $A[\bp \to \bq] \le f^A[\bp \to \bq]$. Now let
		$$
		f^A\{\bp \to \bq\} = \max_\pi |\pi|_f,
		$$
		where the maximum is now over $k$-tuples from $\bp$ to $\bq$ with the disjointness condition removed. In $|\pi|_f$ we only count weights once even if they are covered by multiple paths. Let $\pi$ be a $k$-tuple that achieves this maximum, and define a new $k$-tuple $\fl{\pi}$ by setting $\fl{\pi}_i(t) = \pi_i(\fl{t})$ for all $i, t$. Since $f^A$ has only positive jumps and is constant between integer times, $\fl{\pi}$ also achieves this maximum. Each $\fl{\pi}_i$ corresponds to a discrete lattice path $\pi'_i$ from $p_i$ to $q_i$, and we have the equality
		$$
		|\fl{\pi}|_{f^A} = \sum_{v \in \bigcup_i \pi'_i} A_v.
		$$
		Therefore by Lemma \ref{L:discrete-end}, $f^A\{\bp \to \bq\} \le A[\bp \to \bq]$. Since $f^A[\bp \to \bq] \le f^A\{\bp \to \bq\}$, we have that $f^A[\bp \to \bq] \le A[\bp \to \bq]$ as well.
	\end{proof}
	
	Finally, we can show that the usual RSK bijection is a special case of the cadlag RSK bijection.
	
	\begin{corollary}
		\label{C:melon-cor}
		Let $A$ be an $n \X m$ matrix. Define $f^A \in \sD^n_m$ via the formula in \eqref{E:fM-def}. Then with $(WA, GA)$ as in \eqref{E:j1kkW} and \eqref{E:j1kkM} we have that
		$$
		(WA, GA) = (W_m f^A, G_m f^A) = \operatorname{RSK}_m(f^A).
		$$
	\end{corollary}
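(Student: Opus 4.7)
The second equality $(W_m f^A, G_m f^A) = \operatorname{RSK}_m(f^A)$ is immediate from the definition of $\operatorname{RSK}_m$. The plan for the remaining equality $(WA, GA) = (W_m f^A, G_m f^A)$ is to compare the defining formulas directly. From \eqref{E:melon-def-body}, \eqref{E:Gtf-formula} for $W_m f^A, G_m f^A$ and \eqref{E:j1kkW}, \eqref{E:j1kkM} for $WA, GA$, it will suffice to verify the two multi-point last passage identities
\begin{align}
f^A[(0, n)^k \to (t, 1)^k] &= A[(1, n)^{*k \wedge \lfloor t \rfloor} \to (\lfloor t \rfloor, 1)^{*k \wedge \lfloor t \rfloor}], \label{E:plan-a}\\
f^A[(0, n)^k \to (m, n-s+1)^k] &= A[(1, n)^{*k} \to (m, n-s+1)^{*k}], \label{E:plan-b}
\end{align}
for $k \in \{1, \ldots, n\}$, $t \in [0, m]$, and $k \le s \le n$. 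Since $df^A$ has atoms only at positive integer times, the left hand side of \eqref{E:plan-a} depends only on $\lfloor t \rfloor$, so I will assume $t = y \in \N$ (the case $t < 1$ is trivial since both sides vanish). Proposition \ref{P:fG} then lets me rewrite each lattice LPP on the right as a cadlag LPP with the same endpoint tuple, reducing the problem to purely cadlag identities comparing common-endpoint and staggered-endpoint LPPs.

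For \eqref{E:plan-b}, I will argue that both staggerings are free of cost. Since $df^A$ has no atoms in $\{0\} \times \{1, \ldots, n\}$, the common start $(0, n)^k$ is bijectively equivalent to the staggered $(1, n)^{*k}$ via the instantaneous jumps at $x = 0$ forced on essentially disjoint cadlag paths that share an endpoint. Since $df^A$ also vanishes on $(m, \infty) \times \{1, \ldots, n\}$, the common right endpoint $(m, n-s+1)^k$ can similarly be replaced by the rightward-staggered $(m, n-s+1)^{*k}$ via free extensions of paths beyond $x = m$. Combining the two resulting weight-preserving bijections on disjoint $k$-tuples produces \eqref{E:plan-b}.

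For \eqref{E:plan-a} the right endpoint $(y, 1)$ sits where atoms may exist, so rightward staggering is not free and a different argument is needed. My approach will be structural: essentially disjoint cadlag paths $\pi^{(1)}, \ldots, \pi^{(k)}$ from $(0, n)^k$ to $(y, 1)^k$ are forced to have single-point intervals $[y, y] \times \{\ell\}$ at the right endpoint, guaranteeing that $\cup \pi$ automatically includes every atom $(y, \ell)$ with $\ell < \min(k, n)$. The interior traversal portion of $\cup \pi$ then matches, via Proposition \ref{P:fG}, a $(k \wedge y)$-tuple of disjoint lattice paths with staggered endpoints, yielding \eqref{E:plan-a}. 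The hardest part will be the regime $k > y$: \eqref{E:plan-a} then uses $k$ paths on the left but only $y$ on the right, and I will need to show that the $k - y$ extra LHS paths are constrained tightly enough to contribute no new atoms to $\cup \pi$ beyond what the first $y$ effective paths already cover.
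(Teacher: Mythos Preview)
Your overall plan matches the paper's, and your treatment of \eqref{E:plan-b} is correct. The gap is in \eqref{E:plan-a}. Your structural observation---that essentially disjoint paths from $(0,n)^k$ to $(y,1)^k$ are forced into a vertical staircase at time $y$, so that $\cup\pi$ contains the atoms $(y,1),\dots,(y,k)$---is true, but it does not deliver the comparison you need. The $*k$-stagger on the lattice side is \emph{horizontal}: the endpoints are $(\lfloor t\rfloor,1),(\lfloor t\rfloor+1,1),\dots,(\lfloor t\rfloor+k-1,1)$. The potentially troublesome atoms are therefore not at column $y$ but at columns $y+1,\dots,m$, and the vertical forced structure at time $y$ says nothing about those. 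Your sentence ``the interior traversal portion of $\cup\pi$ then matches, via Proposition~\ref{P:fG}, a $(k\wedge y)$-tuple of disjoint lattice paths with staggered endpoints'' is precisely the step that needs an argument, and the vertical-staircase observation is not that argument.

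The paper handles \eqref{E:plan-a} by splitting on $k$ versus $\lfloor t\rfloor$. For $k\ge\lfloor t\rfloor$ both sides simply equal the total weight of $A$ in the box $\{1,\dots,\lfloor t\rfloor\}\times\{1,\dots,n\}$; this is the \emph{easy} case, not the hard one you flag. For $k<\lfloor t\rfloor$, the key point (implicit in the paper and in Greene's theorem) is that the lattice last passage value in \eqref{E:j1kkW} is computed over the first $\lfloor t\rfloor$ columns of $A$ only, so columns beyond $\lfloor t\rfloor$ carry zero weight and the rightward horizontal extension is free---exactly the mechanism you used for \eqref{E:plan-b}. Once you recognize that the right-hand side lives over the restricted matrix, your own \eqref{E:plan-b} argument already covers \eqref{E:plan-a}, and no separate structural analysis is required.
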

	
	\begin{proof}
		By Proposition \ref{P:fG} and tracing through the definitions, it suffices to show that 
		\begin{equation}
		\label{E:fA-eqn}
			f^A[(0, n)^k \to (t, j)^k] = f^A[(1, n)^{*k  \wedge 
		\fl{t}}  \to (\fl{t}, j)^{*k \wedge \fl{t}}]
		\end{equation}
		for all $t, j, k$ with $k \le n + 1- j$. For $k \ge \fl{t}$, both sides pick up all weights of $A$ in the box $\{1, \dots, \fl{t}\} \X \{1, \dots, n\}$. For $k < \fl{t}$, notice that since $f^A(t) = 0$ for all $t < 1$ and $f^A$ is unchanging between integer times, that 	$f^A[(0, n)^k \to (t, j)^k] = 	f^A[(1, n)^k \to (\fl{t}, j)^k]$. Moreover, essential disjointness at times $1$ and $\fl{t}$ implies that any disjoint $k$-tuple from $(1, n)^k$ to $(\fl{t}, j)^k$ has the same length as some disjoint $k$-tuple from $(1, n)^{*k}  \to (\fl{t}, j)^{*k}$.
	\end{proof}

\begin{remark}
	\label{R:cts-rsk}
	While the RSK correspondence is defined only for matrices with nonnegative integer entries, the maps \eqref{E:j1kkM} and \eqref{E:j1kkW} are still defined for matrices $A$ with nonnegative real entries; there is just no longer a connection with Young tableaux. Proposition \ref{P:fG}, Lemma \ref{L:staircase}, and Corollary \ref{C:melon-cor} still hold in this generality and the proofs go through verbatim.
\end{remark}
	
	\subsection{Dual RSK}
	
	The dual RSK correspondence can also be connected with $\operatorname{RSK}_t$ via lattice last passage. The necessary version of Greene's theorem for dual RSK is Theorem 10 in \cite*{krattenthaler2006growth}. As the details connecting cadlag RSK and dual RSK are similar to the case of the usual RSK correspondence, we only include theorem statements here.
	
	Let $A$ be an $n \X m$ matrix of $0$s and $1$s. For two points $p = (x, k), q = (y, \ell)$ with $x \le y$ and $k \ge \ell$, we say that $\pi = (\pi_1 = p, \dots, \pi_k = q)$ is a \textbf{dual path from $p$ to $q$} if $\pi_i - \pi_{i-1} \in \{(1, s) : s \in \Z_{\le 0} \}$ for all $i$. That is $\pi$ is a path that moves strictly to the right and weakly up at every step.
	 Definitions \eqref{E:lattice-weight}, \eqref{E:LPP-lat}, and \eqref{E:LPP-k} still make sense for dual paths and we write $A\{\bp \to \bq\}$ for a last passage value with dual paths.
	
	Now, for a filling $Q$ of a Young diagram $Y$, we write $Q^T$ for the transposed filling of the transposed Young diagram $Y^T$, i.e. a cell $(a, b) \in Y$ if and only if $(b, a) \in Y^T$ and $Q^T(b, a) = Q(a, b)$. For an $n \X m$ matrix $A$ of $0$s and $1$s, we define a semistandard Young tableau $P$ with at most $n \wedge m$ rows and entries in $\{1, \dots, m\}$ by letting
	$$
	A\{(1, n)^{*k \wedge i} \to (i, 1)^{*k \wedge i}\}= \text{$\#$ of entries in rows $1, \dots, k$ of $Q^T$ that are $\le i$}.
	$$ 
	Also define a semistandard Young tableau $P$ with at most  $n \wedge m$ rows and entries in $\{1, \dots, n\}$ by letting
	$$
	A\{(1, n)^{*k \wedge i} \to (m, n - i + 1)^{*k \wedge i}\} = \text{$\#$ of entries in rows $1, \dots, k$ of $P$ that are $\le i$}.
	$$ 
	The dual RSK correspondence is the map $A \mapsto (Q, P)$ which maps $0-1$ matrices to pairs of semistandard Young tableaux such that the shapes of $Q$ and $P$ are \textbf{conjugate}, i.e $Q^T$ has the same shape as $P$. Observe that with the above definitions $Q^T$ and $P$ have the same shape.
	
	 The fact that $Q$, rather than $Q^T$, is a semistandard Young tableau is a consequence of the differences in the definition of paths and dual paths. Nonetheless, to connect this definition to cadlag RSK it is still $Q^T$ that we want to write as a collection of Pitman ordered paths $(W A_1, \dots W A_n)$. To do this, we embed $Q^T$ not as a collection of cadlag paths with jumps, but rather as a collection of paths with piecewise linear increments. For all $t \in \{1, \dots m\}$ and $k \in \{1, \dots, n\}$, we write
	\begin{equation}
	\label{E:j1kkWdual}
	\sum_{j=1}^k W A_j(t) = A\{(1, n)^{*k \wedge t} \to (t, 1)^{*k \wedge t}\}.
	\end{equation}
	We also set $WA(0) = 0$, and let each line $WA_i$ be linear on every interval $[t, t + 1]$ with $t \in \Z$. Since $Q$ is a semistandard Young tableau, with this definition each line $WA_i$ either has slope $0$ or slope $1$ on every interval. We also turn $P$ into a Gelfand-Tsetlin pattern $G$ in the usual way. For $1 \le k \le i \le n$ we have
	\begin{equation}
	\label{E:j1kkMdual}
	\sum_{j=1}^k G A_j(i) = A\{(1, n)^{*k} \to (m, n - i + 1)^{*k}\}.
	\end{equation}
	We now connect this description to cadlag RSK. For an $n \X m$ $\{0, 1\}$-matrix $A$, define $\ell^A \in \sD^n_m$ by letting
	\begin{equation}
	\label{E:lM-def}
	\ell^A_k(0) = 0, \qquad \mathand \qquad \ell^A_k(t) - \ell^A_k(s) = \sum_{r \in [s+1, t]} A_{r, k} \qquad \text{ for } s, t \in \Z,
	\end{equation}
	and by letting each $\ell^A_k$ be linear between integers. We then have the following analogue of Proposition \ref{P:fG}.
		\begin{prop}
		\label{P:fGdual}
		Let $\bp, \bq$ be such that $A\{\mathbf{p}\to \mathbf{q}\}$ is defined. Then
		\begin{equation}
		\label{E:fG}
		A\{\mathbf{p}\to \mathbf{q}\} = \ell^A[\mathbf{p}\to \mathbf{q}],
		\end{equation}
	\end{prop}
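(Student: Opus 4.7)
The plan is to mimic the proof of Proposition \ref{P:fG}, replacing ordinary lattice paths with dual paths and the cadlag environment $f^A$ with its piecewise linear counterpart $\ell^A$. Writing $\ell^A\{\bp \to \bq\}$ for the maximum over possibly-overlapping cadlag $k$-tuples (weights on overlaps counted once), the goal is to establish the sandwich
\[
A\{\bp \to \bq\} \;\le\; \ell^A[\bp \to \bq] \;\le\; \ell^A\{\bp \to \bq\} \;\le\; A\{\bp \to \bq\}.
\]

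First I would prove the dual analogues of Lemmas \ref{L:staircase} and \ref{L:discrete-end}: for staircase endpoints, $A\{\bp \to \bq\}$ equals the max over possibly-overlapping dual $k$-tuples (union weight), and this extends to general endpoints by adjoining heavy-weighted dual connectors between $\bp, \bq$ and a staircase configuration. The staircase step follows the same minimal-element / push-down argument as Lemma \ref{L:staircase}, adapted to the strictly-right, weakly-up geometry of dual paths; the substance of this step is essentially Theorem 10 of \cite*{krattenthaler2006growth}. The leftmost sandwich inequality $A\{\bp \to \bq\} \le \ell^A[\bp \to \bq]$ then comes from an explicit correspondence: if a disjoint dual $k$-tuple visits row $k$ on the contiguous columns $[a_k, b_k]$, the matching cadlag tuple occupies $[a_k - 1, b_k]$ on line $k$, and piecewise linearity of $\ell^A_k$ gives $\ell^A_k(b_k) - \ell^A_k(a_k - 1) = \sum_{r=a_k}^{b_k} A_{r,k}$, matching weights. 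Vertex-disjointness for dual paths translates to essential disjointness of the cadlag tuple, so the cadlag tuple is admissible.

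The harder step is $\ell^A\{\bp \to \bq\} \le A\{\bp \to \bq\}$. Take any non-disjoint cadlag maximizer $\pi$. Unlike $f^A$, which is constant between its jumps, $\ell^A$ is continuously increasing, so the flooring trick from Proposition \ref{P:fG} does not transfer directly. However, because $\ell^A$ has slopes in $\{0,1\}$, the density of $d\ell^A_k$ on each unit cell $[r-1, r]$ is either identically $0$ or identically $1$; hence the $\ell^A$-mass of a union of intervals on line $k$ depends only on which unit cells the union covers. One can then push each non-integer endpoint of $\pi$ to one of its flanking integers, in a way that preserves the monotonicity constraints $t_k \le t_{k-1}$ of each path, without decreasing the union weight. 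After integerization the cadlag tuple corresponds to a (possibly overlapping) dual $k$-tuple with the same weight, which is bounded by $A\{\bp \to \bq\}$ via the extension of the dual staircase lemma, closing the sandwich. The main obstacle is executing the integerization globally consistently: endpoint moves for different paths and different lines interact both through the ordering constraints and through the overlap structure, so some care is needed to carry them out in a consistent order.
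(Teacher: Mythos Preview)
Your overall plan is exactly what the paper indicates: it simply says the proof ``is similar to the proof of Proposition \ref{P:fG},'' and you have correctly set up the dual analogues of Lemmas \ref{L:staircase} and \ref{L:discrete-end} together with the same sandwich $A\{\bp\to\bq\}\le \ell^A[\bp\to\bq]\le \ell^A\{\bp\to\bq\}\le A\{\bp\to\bq\}$, and you have correctly identified that the flooring step is where the dual argument genuinely differs.

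There is, however, a real gap in your integerization step. You write that since the density of $d\ell^A_k$ is identically $0$ or $1$ on each unit cell, ``the $\ell^A$-mass of a union of intervals on line $k$ depends only on which unit cells the union covers.'' This is false: on a slope-$1$ cell $[r-1,r]$ the set $[r-1,r-\tfrac12]$ has mass $\tfrac12$, not $1$, even though it meets exactly the same cell as $[r-1,r]$. Constant density does not make the mass cell-measurable; what it actually buys you is weaker but still sufficient. With all other jump times held fixed, the union mass is a \emph{piecewise linear} function of a single jump time $t_j^{(i)}$, with slope in $\{-1,0,1\}$ on each piece and breakpoints only at the cell boundaries and at the other jump times lying in the same cell. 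On each linear piece the maximum sits at an endpoint, so one can push $t_j^{(i)}$ in the non-decreasing direction until it hits an integer or collides with another jump time, then transfer the move to that one, and iterate until every jump time is an integer. This does close the sandwich, and it is essentially the ``care'' you allude to at the end, but the justification you actually wrote does not establish it. Replace the unit-cell claim with this piecewise-linear iterative argument (passing first to order statistics, so that the paths are ordered and the interactions are one-sided, makes the iteration cleaner).
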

	
	The proof of Proposition \ref{P:fGdual} is similar to the proof of Proposition \ref{P:fG}. 
Proposition \ref{P:fGdual} leads to an analogue of Corollary \ref{C:melon-cor}.
	
		\begin{corollary}
		\label{C:melon-cordual}
		Let $A$ be an $n \X m$ matrix of $0$s and $1$s. Then with $(WA, GA)$ as in \eqref{E:j1kkWdual} and \eqref{E:j1kkMdual} we have that
		$$
		(WA, GA) = (W_m \ell^A, G_m \ell^A) = \operatorname{RSK}_m(\ell^A).
		$$
	\end{corollary}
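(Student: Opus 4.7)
The plan is to follow the argument of Corollary \ref{C:melon-cor} essentially verbatim, with $f^A$ replaced by $\ell^A$ and Proposition \ref{P:fG} replaced by Proposition \ref{P:fGdual}. After using Proposition \ref{P:fGdual} to equate $A\{\bp \to \bq\}$ with $\ell^A[\bp \to \bq]$, unraveling the defining formulas \eqref{E:j1kkWdual}, \eqref{E:j1kkMdual}, \eqref{E:melon-def-body} and \eqref{E:Gtf-formula} reduces the corollary to two assertions: (a) for every integer $t \in \{1, \ldots, m\}$, every $j$, and every $k \le n - j + 1$,
\begin{equation*}
\ell^A[(0, n)^k \to (t, j)^k] = \ell^A[(1, n)^{*k \wedge t} \to (t, j)^{*k \wedge t}];
\end{equation*}
and (b) for $j = 1$, the left-hand side interpolates linearly in $t$ on each unit interval $[s, s+1]$ with $s$ integer, matching the prescription for $WA$ in \eqref{E:j1kkWdual}.

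For (a), split by cases exactly as in Corollary \ref{C:melon-cor}. When $k \ge t$, any disjoint $k$-tuple connecting the staircase endpoints is forced via its vertical strips to cover the whole rectangle $\{1, \ldots, t\} \X \{j, \ldots, n\}$, so both sides equal the total mass $\sum_{r \le t,\, i \ge j} A_{r,i}$. When $k < t$, use that $\ell^A$ vanishes on $[0, 1)$ to rewrite the left-hand side as $\ell^A[(1, n)^k \to (t, j)^k]$; the task becomes showing that stacked endpoints can be replaced by staircase endpoints at no cost. The key step is that a disjoint optimizer can be chosen with all vertical transitions at integer times, since each $\ell^A_i$ has slope constant in $\{0, 1\}$ on each unit interval and the weight contribution from a single transition time depends linearly on that time. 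Given integer jump times, essential disjointness at $1$ and at $t$ then lets us slide the endpoints into their staircase positions without altering the total length, in direct analogy with the pure-jump argument.

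For (b), on each interval $[s, s+1]$ every $\ell^A_i$ is linear with slope $A_{s+1, i}$, so among disjoint $k$-tuples with integer jump times outside $[s, s+1]$, the contribution from $[s, t]$ is linear in $t - s$. Taking a maximum over the finite family of skeletons produces a convex piecewise-linear function of $t$, which is actually linear on the open interval $(s, s+1)$ since the optimal skeleton does not change there. The main obstacle will be the integer-jump-time reduction in step (a): whereas in Corollary \ref{C:melon-cor} the pure-jump structure of $f^A$ made it immediate, here one must genuinely argue that non-integer vertical transitions in an optimizer across $\ell^A$ can be pushed to integer times without loss of weight, a feature that relies on the slopes $A_{\cdot, i}$ taking only the values $0$ and $1$ so that exchanges between adjacent lines are neutral at the optimum.
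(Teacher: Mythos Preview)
Your overall strategy matches the paper's implicit approach (the paper omits this proof as analogous to Corollary~\ref{C:melon-cor}), but there is a concrete error in part (a) and an unjustified step in part (b).

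In (a), the claim that ``$\ell^A$ vanishes on $[0,1)$'' is false: by \eqref{E:lM-def}, each $\ell^A_i$ is linear on $[0,1]$ with slope $A_{1,i}\in\{0,1\}$, so $\ell^A_i(1^-)=\ell^A_i(1)=A_{1,i}$, not $0$. This is precisely where the Bernoulli-path embedding differs from the pure-jump embedding $f^A$ (which \emph{is} constant on $[0,1)$), and it means you cannot simply replace the starting time $0$ by $1$ at no cost. For instance with $n=k=1$ and $t=2$ one has $\ell^A[(0,1)\to(2,1)]=A_{1,1}+A_{2,1}$ but $\ell^A[(1,1)\to(2,1)]=A_{2,1}$. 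The correct bookkeeping must track the unit shift between cadlag time and lattice column at the left endpoint (and the analogous issue at the right staircase endpoints), after which the stacked-versus-staircase argument can be made to work.

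In (b), the assertion that ``the optimal skeleton does not change'' on $(s,s+1)$ is unsupported: a maximum of finitely many affine functions can certainly have a breakpoint inside a unit interval. What actually prevents this is integrality. Each $\ell^A_i$ takes integer values at integer times, and one checks directly that the two-line Pitman transform preserves both this and the slope-in-$\{0,1\}$ condition: the running maximum of $\ell^A_{i+1}-\ell^A_i$ has integer values at integers and slope in $\{-1,0,1\}$ on each unit interval, which forces it to be linear there. Iterating through the $\sigma_i$'s as in Definition~\ref{D:iterated-melon-def} then shows $W_m\ell^A$ is a Bernoulli path. Equivalently, simply invoke Example~\ref{Ex:restrictions}.5, which records exactly this closure property.
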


	\begin{remark}
		\label{R:cts-dual-rsk}
		While the dual RSK correspondence is defined only for matrices with $\{0,1\}$ entries, the maps \eqref{E:j1kkMdual} and \eqref{E:j1kkWdual} are still defined for matrices $A$ with arbitrary real entries; there is just no longer a connection with Young tableaux. Proposition \ref{P:fGdual}  and Corollary \ref{C:melon-cordual} still hold in this generality.
	\end{remark}














\appendix

\section{Appendix: technical proofs}
\label{S:appendix}
 \begin{proof}[Proof of Lemma \ref{L:wm-lem}]
		Set
		$$
		s(x, y) = \sup_{z \in [x, y]} f_2(z) - f_1(z^-) = \sup_{z \in [x, y]} s(z, z),
		$$
		so that we have 
		$$
		s(x, y^-) = \sup_{z \in [x, y)} f_2(z) - f_1(z^-).
		$$
		For each $x$, the function $s(x, \cdot)$ is increasing. Also, since the functions $f_i$ are cadlag with positive jumps, we have that $s(x, \cdot)$ is cadlag. (Note that this would not hold if we allowed negative jumps in $f_1$.) We also have
		\begin{equation}
		\label{E:s-identities}
		s(x, y) - s(x, y^-) = [s(y, y) - s(x, y^-)]^+ \le f_2(y) - f_2(y^-).
		\end{equation}
		We can explicitly write last passage values across $f$ as
		\begin{equation}
		\label{E:lpp-2-lines}
		f\big[(x, 2) \to (y, 1) \big] = f_1(y) - f_2(x^-) + s(x, y).  
		\end{equation}
		Specializing to the case $x = 0$, and using that $Wf_1+Wf_2=f_1+f_2$, we have 
		\begin{align}
		\label{E:Wf-formulas}
		Wf_1(t) = f_1(t) + s(0, t) \qquad \mathand \qquad Wf_2(t) = f_2(t) - s(0, t).
		\end{align}
		From the fact that $s(0, \cdot)$ is cadlag and increasing, the function $Wf_1$ is cadlag with only positive jumps. Also, by \eqref{E:s-identities}, the function $Wf_2$ is cadlag with only positive jumps, so $W$ maps $\sD^2$ to itself.
		The last passage value across $Wf$ is
		\begin{align*}
		Wf\big[(x, 2) \to (y, 1) \big] &= Wf_1(y) - Wf_2(x^-) + \sup_{z \in [x, y]} Wf_2(z)-Wf_1(z^-).
		\end{align*}
		Substituting the formulas \eqref{E:Wf-formulas} we get that this equals
		\begin{equation}
		\label{E:f1f2s}
		f_1(y) + s(0, y) - f_2(x^-) + s(0, x^-) + \sup_{z \in [x, y]} [s(z,z)- s(0, z)-s(0,z^-)].
		\end{equation}
		By comparing with \eqref{E:lpp-2-lines}, we can see that the lemma will follow from the equality
		\begin{equation}
		\label{E:sup-equal}
		s(x, y) - s(0, y) - s(0, x^-) = \sup_{z \in [x, y]} [s(z,z) - s(0, z)-s(0,z^-)].
		\end{equation}
		To prove \eqref{E:sup-equal}, we divide into cases. First suppose that $s(0, x^-) = s(0, y)$. In this case, since $s(0, \cdot)$ is nondecreasing, we have that $s(0, z) =s(0,z^-)= s(0, x^-) = s(0, y)$ for all $z \in [x, y]$. Therefore
		\begin{align*}
		\sup_{z \in [x, y]} [s(z,z) - s(0, z)-s(0,z^-)] &= \sup_{z \in [x, y]} s(z,z) - s(0, y) - s(0, x^-) \\
		&= s(x, y) - s(0, y)-s(0, x^-).
		\end{align*}
		We turn to the  case when $s(0, x^-) < s(0, y)$. By definition, 
		\begin{equation}
		\label{E:s-0-x}
		s(0,y)=s(0,x^-)\vee s(x,y), \quad \text{so} \quad s(0,y)=s(x,y).
		\end{equation}
		
		Set 
		$$
		z_0 = \sup\{z \in [x, y]\,: \,s(0, z^-) =s(0, x^-)\}. 
		$$
		The function $s(0, \cdot^-)$ is left continuous, so this is in fact a maximum. In particular, since $s$ is nondecreasing, for each $z_1>z_0$ 
		$$s(0,z_1)  \ge s(0,z_1^-) > s(0,z_0^-).$$
		So we have, by definition of $s$
		$$
		s(0,z_1)=s(0,z_0^-)\vee s(z_0,z_1)=s(z_0,z_1).
		$$
		By  the right continuity of $s(0, \cdot)$ and $s(z_0, \cdot)$, as $z_1\downarrow z_0 $ we get 
		$
		s(0,z_0)=s(z_0,z_0).$ By choosing $z=z_0$ in the supremum on the right hand side of \eqref{E:sup-equal} we get
		\begin{equation}
		\label{E:ineq-sup}
		-s(0,x^{-}) = -s(0,z_0^-)\le \sup_{z \in [x, y]} [s(z,z) - s(0, z) - s(0, z^-)] 
		\end{equation}
		Since $s(z,z)\le s(0,z)$, and the fact that $s$ is nondecreasing, the right hand side can be upper bounded by 
		$$
		\sup_{z \in [x, y]}[ -s(0, z^-) ] = - s(0, x^-).
		$$ 
		so \eqref{E:ineq-sup} is in fact an equality. Since $s(0, y) = s(x, y)$ by \eqref{E:s-0-x}, this proves the preservation of last passage values in \eqref{E:sup-equal}. 
\end{proof}

\begin{proof}[Proof of Lemma \ref{L:push-back}]
We use the notation from the proof of Lemma \ref{L:wm-lem} in this appendix. Recall that 		$$
		s(x, y) = \sup_{z \in [x, y]} f_2(z) - f_1(z^-) = \sup_{z \in [x, y]} s(z, z).
		$$
Let
$$
I_f(x, y) = \{z \in [x, y] : s(z, z) = s(x, y)\}.
$$
This is the set of all possible jump times from line $2$ to $1$ for geodesics from $(x, 2)$ to $(y, 1)$ in $f$.
Also set $r(x, y) = \sup_{z \in [x, y]} [s(z, z) - s(0, z) - s(0, z^-)]$ and let 
$$
I_{Wf}(x, y) = \{z \in [x, y] : r(z, z) = r(x, y)\}.
$$
By \eqref{E:f1f2s} this is the set of all possible jump times from line $2$ to $1$ for geodesics from $(x, 2)$ to $(y, 1)$ in $Wf$.
Then the desired ordering on geodesics holds if and only if
$$
\inf I_{Wf}(x, y) \le \inf I_f(x, y), \qquad \text{ and } \qquad \sup I_{Wf}(x, y) \le \sup I_f(x, y).
$$
Again, we first deal with the case when $s(0, x^-) = s(0, y)$. In this case, for all $w \in [x, y]$ we have
$$
r(w, w) = s(w, w) - 2 s(0, x^-), 
$$
so $I_f(x, y) = I_{Wf}(x, y)$. Now suppose $s(0, x^-) < s(0, y)$. Define 
$$
A = \sup \{z \in [x, y] : s(0, z^-) = s(0, x^-)\}, \qquad B = \inf \{z \in [x, y] : s(0, z) = s(0, y)\}.
$$
We clearly have $A \le B$. Moreover,
$s(0, y) = s(x, y)$ in this case, and so for $z < I$, we must have $s(z, z) < s(x, y)$. Hence
$I_f(x, y) \sset [B, y]$. To complete the proof, we show $I_{Wf}(x, y) \sset [x, A].$ Since \eqref{E:ineq-sup} is an equality in this case, see the discussion following that inequality, at every point $w \in I_{Wf}(x, y)$, we have $r(w, w) = - s(0, x^-)$. Moreover, for $w > A$, we have
\begin{align*}
r(w, w) &= s(w, w) - s(0, w) - s(0, w^-) \le - s(0, w^-)
< - s(0, x^-).
\end{align*}
Therefore $I_{Wf}(x, y) \sset [x, A].$ 
\end{proof}

\bigskip

\noindent {\bf Acknowledgments.}  D.D. and M.N. were supported by NSERC postdoctoral fellowships. B.V. was supported by the Canada Research Chair program, the NSERC Discovery Accelerator grant.

\bibliographystyle{dcu}

\bibliography{Citations}

\end{document}